\documentclass[11pt,a4paper,oneside,reqno]{amsart} 
\input{packages_and_notation_shape.tex}

\hyphenation{} 
\title{Correlated Random Matrices: Band Rigidity and Edge Universality}
\author{Johannes Alt$^{\ast\dagger}$ \and L\'aszl\'o Erd\H{o}s$^{\ast\dagger}$ \and Torben Kr\"uger$^{\ddagger\mathsection}$ \and Dominik Schr\"oder$^{\ast\dagger}$}
\address{$^\ast$IST Austria, Am Campus 1, 3400 Klosterneuburg, Austria}
\address{$^\ddagger$University of Bonn, Endenicher Allee 60, 53115 Bonn, Germany}
\thanks{$^\dagger$Partially supported by ERC Advanced Grant No. 338804}
\thanks{$^\mathsection$Partially supported by the Hausdorff Center for Mathematics}
\email{dschroed@ist.ac.at} 
\email{jalt@ist.ac.at}
\email{lerdos@ist.ac.at}
\email{torben.krueger@uni-bonn.de}
\date{\today}  
\subjclass[2010]{60B20, 15B52} 
\keywords{Universality, Band rigidity, Random matrices, Local law, Tracy-Widom distribution}
\begin{document} 
\begin{abstract}
We prove edge universality for a general class of correlated real symmetric or complex Hermitian Wigner matrices with arbitrary expectation. 
 Our theorem also applies to internal edges of the self-consistent density of states. In particular,
we establish a strong form of band rigidity which excludes mismatches between location and label of eigenvalues close to internal edges in these general models. 
\end{abstract}  
\maketitle 

\section{Introduction}
Spectral statistics of large random matrices exhibit a remarkably robust universality pattern; the local distribution of eigenvalues
is independent of details of the matrix ensemble up to symmetry type. In the bulk of the spectrum this was first observed by Wigner  and formalized by Dyson and Mehta  \cite{MR0220494} who also computed  the correlation functions  of the Gaussian ensembles in the 1960's. At the spectral edges
the correct statistics was identified by Tracy and Widom both in the GUE and GOE ensembles \cite{MR1257246,MR1385083}
in the mid 1990's.  

Beyond Gaussian ensembles, the first actual proofs of universality for Wigner matrices   took different paths in the bulk and at the edge. While in the bulk 
only limited progress was made until a decade ago, the first fairly general edge universality  proof  by Soshnikov \cite{MR1727234} appeared shortly after  \cite{MR1257246,MR1385083}.
The main reason is that  edge statistics is accessible via an ingenious but laborious extension of the classical moment method of Wigner.
 In contrast, the bulk universality required fundamentally new tools based on resolvents and  the analysis of the Dyson Brownian motion developed in 
a series of work \cite{MR2964770,MR3098073,MR2662426,MR2810797,MR3372074,MR2871147}. This method, called the \emph{three-step strategy}, is summarized in \cite{MR3699468}. In certain cases
parallel results \cite{MR2669449,MR2784665} were obtained via the \emph{four moment comparison theorem}.

Despite its initial success \cite{MR1727234},
 the moment method for edge universality seems limited when it comes to generalisations beyond Wigner matrices with i.i.d.~entries; 
 the resolvent approach is much more flexible. Its primary goal is to establish \emph{local laws}, i.e.~proving that the local eigenvalue density on scales slightly above the eigenvalue spacing becomes deterministic as the dimension of the matrix tends to infinity. Refined versions of the local law even identify resolvent matrix elements with a spectral parameter very close to the real axis. In contrast to the bulk, at the spectral edge this information can be boosted to detect individual eigenvalue statistics by comparison with the Gaussian ensemble. These ideas have  led to the proof of the Tracy-Widom edge universality for Wigner matrices  with high moment conditions \cite{MR2871147}, see also   
\cite{MR2669449} with vanishing third moment. Finally, a necessary and sufficient condition on the entry distributions was found in \cite{MR3161313}
following an almost optimal necessary condition in \cite{MR2548495}. 
Direct resolvent comparison methods have  been used to prove Tracy-Widom universality for \emph{deformed Wigner matrices}, i.e.~matrices with a deterministic diagonal expectation, \cite{MR3405746}, even in a certain sparse regime \cite{MR3800840}.
The extension  of this approach to sample covariance matrices  with a  diagonal  population covariance matrix at extreme edges 
\cite{MR3582818}  has resolved a long standing conjecture in the statistics literature. Tracy-Widom universality for general population covariance matrices,
  including internal edges,  was established in~\cite{MR3704770}.

The next level of generality is to depart from the i.i.d.~case. While the resolvent method for proving local laws can handle  
\emph{generalized Wigner ensemble}, i.e.~matrices $H=(h_{ab})$ with merely stochastic variance profile $\sum_b \Var h_{ab}=1$,  
varying variances cannot be simultaneously matched with a GUE/GOE ensemble so 
the direct comparison does not work.
The  problem was resolved  in \cite{MR3253704} with a general approach that also covered invariant $\beta$-ensembles. 
While Dyson Brownian motion did not play a direct role in \cite{MR3253704}, the proof used the addition of a small Gaussian component
and  the concept of local ergodicity of the Gibbs state; ideas developed originally in \cite{MR2810797,MR2919197} in the context of bulk universality.

A fully dynamical approach to edge universality, following  an earlier development in the bulk 
based on the \emph{three-step strategy}, has recently been given in \cite{2017arXiv171203881L}. 
In general, the first step  within any three-step strategy is the
 local law providing a priori bounds. The second step is  the fast relaxation to equilibrium of the Dyson Brownian motion
that proves universality for Gaussian divisible ensembles. The third step is a perturbative  comparison argument to remove the small Gaussian component.
Recent advances in the bulk 
 have crystallized that the  only model dependent step in this strategy is the first one. The other two steps
  have been formulated as very general ``black-box'' tools whose only input is the local law
  see \cite{MR3687212,MR3729630,landon2016fixed,2017arXiv171203881L}.
  Using the three-step approach and \cite{2017arXiv171203881L},
  edge universality for sparse matrices was proved in \cite{2017arXiv171203936H} and for  
   correlated Gaussian matrices with a quite specific two-scale correlation structure  in \cite{2017arXiv171204889A}. 
 All   these edge universality results only cover the \emph{extremal edges} of the spectrum, while 
 the self-consistent (deterministic)  density of states  $\varrho$ may be supported on several intervals. 
 
 Multiple interval support becomes ubiquitous for \emph{Wigner-type} matrices \cite{MR3719056}, i.e.~matrices with independent entries and general expectation and variance profile. A prerequisite for Tracy-Widom universality, the  square root singularity in the density, even at the \emph{internal edges}, is a universal phenomenon for a very large class of random matrices since it is  inherent to the underlying \emph{Dyson equation}. This was demonstrated  for  Wigner-type matrices  in \cite{2015arXiv150605095A} and 
  here we extend it for correlated random matrices with a general correlation structure. We remark that
 a second singularity type, the \emph{cubic root cusp},  is also possible; the corresponding analysis of the Dyson equation is given 
 in \cite{shapepaper}, while the optimal local law and the universal spectral statistics are proven in \cite{1809.03971,1811.04055}. 

In the current paper we show that the eigenvalue statistics at the spectral edges of  
 $\varrho$
 follow the Tracy-Widom distribution, assuming only a mild decay of correlation between entries, but otherwise no special structure. 
 We can handle any internal edge as well.  
In the literature internal edge universality for matrices of Wigner-type has first been established for deformed GUE ensembles \cite{MR2799948} 
which critically relied on contour integral methods, only available for Gaussian models in the Hermitian symmetry class. 
A similar method handled extreme eigenvalues of deformed GUE \cite{MR2288065,MR3500269}. 
A more general  approach for internal edges has been given in \cite{MR3704770} that could handle any deformed Wigner  matrices with general expectation,
as long as the variance profile is constant, by comparing it with the corresponding Gaussian model.  Our method requires neither constant variance nor independence
of the matrix elements. 

 The proof of
our general form of edge universality at all internal edges follows the three-step strategy and uses  the recent paper \cite{2017arXiv171203881L}
for the second step and well established  canonical arguments for the third step that will be summarized.
 The backbone of the work is thus the first step, an optimal local law at the spectral edges, the proof of which has two well separated components; a probabilistic and a deterministic one. The probabilistic component 
  is 
   insenstive to the location in the spectrum and follows directly from \cite{2017arXiv170510661E}. 
  Here we present a compact and practically self-contained proof of the deterministic component of the local law that can be followed without consulting previous works;
we only rely on some general results from  functional analysis  proven in \cite{2016arXiv160408188A} and some minor technicality
on 
the Dyson equation 
from \cite{shapepaper}.
First, we develop
a detailed shape analysis of the self-consistent  density  $\varrho$ near the regular edges, generalizing the previous  bulk result from 
\cite{2016arXiv160408188A} and the singularity analysis in the independent 
case from \cite{2015arXiv150605095A}.
    Second, we prove a strong version of the local
law that excludes eigenvalues in the internal gaps. Third, we establish a  topological rigidity phenomenon for the \emph{bands}, the connected components that constitute the support of  $\varrho$. 

 \emph{Band rigidity}   is a new phenomenon for the Dyson equation and it 
asserts that the number of eigenvalues within each band exactly matches the mass that $\varrho$ predicts for that band. The topological nature of band rigidity guarantees that this mass  remains constant along the deformations of the model
as long as the gaps between the bands remain open. A similar rigidity (also called ``exact separation of eigenvalues'')
has first been established for sample covariance matrices in \cite{MR1733159}
and it also played a key role in Tracy-Widom universality proof at internal edges in \cite{MR3704770}. 
  Note that band rigidity is a much stronger concept than the customary rigidity in random matrix theory \cite{MR2871147} that allows for an uncertainty in the location of $N^\epsilon$ eigenvalues. 
  In other words, there is no mismatch whatsoever
between location and label of the eigenvalues near the internal edges along the matrix  Dyson Brownian motion, the label of the eigenvalue uniquely determines to which spectral band it belongs.

Our result 
 highlights a key difference between Wigner-type matrix models and invariant $\beta$-ensembles. For self-consistent densities with multiple support intervals (the so called \emph{multi-cut} regime), the number of particles (eigenvalues) close to some support interval fluctuates for invariant ensembles with general potentials \cite{2013arXiv1303.1045B}. As a consequence internal edge universality results (see e.g.~\cite{MR2012268,MR3756421}) require a stochastic relabelling of eigenvalues.

Our setup is a general $N\times N$ random  matrix $H=H^*$ with a slowly decaying correlation structure and arbitrary expectation, under the very same 
general conditions as the recent bulk universality result from \cite{2017arXiv170510661E}. 
The starting point is to find the deterministic approximation of the resolvent $G(z)=(H-z)^{-1}$ with a complex spectral parameter $z$ in the upper half plane. This approximation is given by the solution $M=M(z)$ to the \emph{Matrix Dyson Equation (MDE)}, see \eqref{MDE} below. The resolvent $G(z)$ approximately  satisfies  the MDE
with an additive perturbation term  which was already shown to be sufficiently small in \cite{2017arXiv170510661E}. This fact,  combined with a careful stability and shape analysis 
 of the MDE in Section~\ref{sec:dens_close_to_edge}
  imply that $G$ is indeed close to $M$. In order to prove edge universality we use a correlated Ornstein-Uhlenbeck process $H_t$ which adds a small Gaussian component of size $t$ to the original matrix model, while preserving expectation and covariance. We prove that the resolvent satisfies the optimal local law uniformly along the flow and appeal to the recent result from \cite{2017arXiv171203881L} to prove edge universality for $H_t$ whenever $t\gg N^{-1/3}$. In the final step we perform a resolvent 
  comparison together with our band rigidity
 to show that the eigenvalue correlation functions of $H_t$ matches those of $H$ as long as $t\ll N^{-1/6}$ which yields the desired edge universality.

After presenting our main results in Section \ref{section main result}, we then prove the optimal local law 
in Section \ref{sec proof local law}. Section~\ref{sec:dens_close_to_edge} contains the analysis of the MDE. Both types of rigidity are shown in Section \ref{sec:band_rigidity}. Section \ref{sec proof edge univ} is devoted to the proof of edge universality. 

\subsection*{Notations}
If for some constants $c,C>0$ it holds that $f\le C g$ or $c g\le f\le C g$, then we write $f\lesssim g$ and $f\sim g$, respectively. These constants $c,C$ may depend on some basic parameters which we call model parameters later. We denote vectors by bold-faced lower case Roman letters $\vx,\vy\in\C^N$, and matrices by upper case Roman letters $A,B\in\C^{N\times N}$. The standard scalar product and Euclidean norm on $\C^N$ will be written as $\braket{\vx,\vy}$ and $\norm{\vx}$, while we also write $\braket{A,B}\defeq N^{-1}\Tr A^\ast B$ for the scalar product of matrices, and $\braket{A}\defeq N^{-1}\Tr A$. The usual operator norm induced by the vector norm $\norm{\cdot}$ will be denoted by $\norm{A}$, while the Hilbert-Schmidt (or Frobenius) norm will be denoted by $\norm{A}_\text{hs}\defeq \sqrt{\braket{A,A}}$. 
The operator norms induced on linear maps $\C^{N\times N} \to \C^{N\times N}$ 
by $\normtwo{\genarg}$ and $\norm{\genarg}$ are denoted by $\normsp{\genarg}$ and $\norm{\genarg}$, respectively. The identity matrix in $\C^{N\times N}$ is indicated by $\id$ and the 
identity mapping on $\C^{N\times N}$ by $\Id$.  
For random variables $X,Y,\dots$ we denote the joint cumulant by $\kappa(X,Y,\dots)$. For integers $n$ we define $[n]\defeq\{1,\dots,n\}$.

\section{Main results}\label{section main result}
We consider correlated real symmetric and complex Hermitian random matrices of the form 
\begin{equation*} 
H=A+W, \qquad \E W=0 
\end{equation*}
with deterministic $A\in\C^{N\times N}$ and sufficiently fast decaying correlations among the matrix elements of $W$. The matrix entries $w_{ab}=w_\alpha$ are often labelled by double indices $\alpha=(a,b)\in[N]^2$. The randomness $W$ is scaled in such a way that $\sqrt N w_\alpha$ are random variables of order one\footnote{In some previous works, as in \cite{2017arXiv170510661E}, the convention $H=A+W/\sqrt N$ with order one $w_\alpha$ was used.}. This requirement ensures that the  size of the  spectrum of $H$ is kept of order $1$, as $N$ tends to infinity. Our first aim is to prove that the resolvent $G=G(z)=(H-z)^{-1}$ is well approximated by the solution $M=M(z)$ to the \emph{Matrix Dyson equation (MDE)}
\begin{equation}\label{MDE}
\id+(z-A+\SS[M])M=0,\quad \Im M\defeq \frac{M-M^\ast}{2\ii}>0,\quad \SS[R]\defeq \E W R W, \quad z\in\HC\defeq\Set{z\in\C|\Im z>0}
\end{equation}
in a neighbourhood around the edges of the spectrum. We suppress the dependence of $G$ and $M$, and similarly of many other quantities, on the spectral parameter $z$ in our notation. Estimates on $z$-dependent quantities are always meant uniformly for $z$ in some specified domain. From the solution $M$ we define $\varrho \colon \Hb \to \R$ and extend it to the real line 
\begin{equation}\label{definition scDOS}
\varrho(z) \defeq \frac{1}{\pi} \Im \braket{M(z)}, \quad z \in \Hb, \qquad  
\varrho(\tau)\defeq \lim_{\eta\searrow 0}\dens(\tau + \ii \eta), \quad  \tau\in\R. 
\end{equation}
 By \cite[Proposition 2.2]{2016arXiv160408188A} the limit in \eqref{definition scDOS} exists and $\varrho$ is a H\"older continuous function on $\Hb \cup \R$ under Assumptions \ref{assumption A} and \ref{assumption flatness} below. 
The \emph{self-consistent density of states} is the restriction of $\varrho$ to $\R$
which approximates the density of eigenvalues of $H$ increasingly well as $N$ tends to infinity. 
Its support, $\supp\varrho \subset \R$, is called the \emph{self-consistent spectrum}.
We remark that $\varrho$ on $\Hb$ is the harmonic extension of $\varrho|_\R$. 
We now list our main assumptions, which are identical to those from \cite{2017arXiv170510661E}, apart from  the additional 
Assumption~\ref{assumption bounded M}, which was automatically satisfied in \cite{2017arXiv170510661E}, i.e.~in the bulk regime (cf.~Remark~\ref{rmk:boundedness of M} below).  
All constants in Assumptions \ref{assumption A}--\ref{assumption bounded M} and Definition \ref{def regular edge} are called \emph{model parameters}. 

\begin{assumption}[Bounded expectation]\label{assumption A}
There exists some constant $C$ such that $\norm{A}\le C$  for all $N$.
\end{assumption}
\begin{assumption}[Finite moments]\label{assumption high moments}
For all $q\in\N$ there exists a constant $\mu_q$ such that $\E \abs[0]{\sqrt N w_{\alpha}}^q\le \mu_q$ for all $\alpha.$
\end{assumption}
\stepcounter{assumption}\stepcounter{assumption}
\begin{assCD}[\hypertarget{assumpCD}Polynomially decaying metric correlation structure]
For the $k=2$ point correlation we assume
\begin{subequations}\label{metric tree decay CD}
\begin{align}\abs{\kappa\Big(f_1(\sqrt N W),f_2(\sqrt N W)\Big)} &\le C_2 \frac{\sqrt{\E \abs[1]{f_1(\sqrt N W)}^2}\sqrt{\E \abs[1]{f_2(\sqrt N W)}^2}}{1+d(\supp f_1,\supp f_2)^s} , 
\label{cor decay}
\intertext{for some $s>12$ and all square integrable functions $f_1,f_2$. For $k\ge 3$ we assume a decay condition of the form}
\label{tree decay}\abs{\kappa\Big(f_1(\sqrt N W),\dots,f_k(\sqrt NW)\Big)} &\le C_k \prod_{e\in E(T_{\text{min}})} \abs{\kappa(e)}, \end{align}
\end{subequations}
where $T_{\text{min}}$ is the minimal spanning tree in the complete graph on the vertices $1,\dots,k$ with respect to the edge length $\dist(\{i,j\})=d(\supp f_i,\supp f_j)$, i.e.~the tree for which the sum of the lengths $\dist(e)$ is minimal, 
and $\kappa(\{i,j\})=\kappa(f_i,f_j)$. Here $d$ is the standard Euclidean metric on the index space $[N]^2$  and $\supp f \subset [N]^2$ denotes  the set indexing all entries in $\sqrt N W$ that $f$ genuinely depends on, and $C_k<\infty$ are some absolute constants.
\end{assCD}
\begin{remark}
All results in this paper and their proofs hold verbatim  if Assumption \hyperlink{assumpCD}{(CD)} is replaced by the more general assumptions (C),(D) from \cite{2017arXiv170510661E}. In particular, the metric structure imposed on the index space $[N]^2$ is not essential. For details the reader is referred to \cite[Section 2.1]{2017arXiv170510661E}.
\end{remark}
\begin{assumption}[Flatness]\label{assumption flatness} 
There exist constants $0<c<C$ such that $c\braket{T} \le \SS[T] \le C \braket{T}$ for any positive semi-definite matrix $T$.    
\end{assumption}
\begin{assumption}[Fullness]\label{assumption fullness}
There exists a constant $\lambda>0$ such that $N\E \abs{\Tr B W}^2 \ge \lambda \Tr B^2$ for any deterministic matrix $B$ of the same symmetry class  (either real symmetric or complex Hermitian)  as $H$.
\end{assumption}

\begin{assumption}[Bounded self-consistent Green function]\label{assumption bounded M}
There exist constants $\omega_\ast, M_\ast>0$ such that 
\[
\sup_z \norm{M(z)} \le M_\ast,
\] 
where the supremum is taken over all $z \in \mathbb{H}$ with $\abs{\Re z-\tau_0}\le \omega_\ast$ and $0 < \Im z \leq 1$.
\end{assumption}
\begin{remark}
Assumption~\ref{assumption flatness} is an effective mean field condition that provides upper and lower bounds on the variances of the entries of $W$. In fact it is equivalent to $\E\abs{\braket{\vx,W\vy}}^2 \sim 1/N$ for all normalised $\vx,\vy \in \C^N$.
Assumption~\ref{assumption fullness} is equivalent to $\SS-\lambda \SS_{\mathrm{G}}$ remaining positivity preserving, where  $\SS_{\mathrm{G}}$ is the self-energy operator of a full GUE/GOE matrix.
\end{remark}
\begin{remark} \label{rmk:boundedness of M}
The boundedness of $\norm{M}$ is automatically satisfied in the spectral bulk. At the edges, however, the boundedness cannot be guaranteed under Assumptions \ref{assumption A}--\ref{assumption flatness} but has to be  verified for each concrete model (see \cite[Section 9]{shapepaper} for a large class of models for which $\norm M$ is guaranteed to be bounded).
\end{remark}

Our main technical result is an optimal local law at \emph{regular edges} $\tau_0 \in \partial \supp \varrho$ asserting that $G(z)=(H-z)^{-1}$ is well approximated by $M(z)$ in the $N \to \infty$ limit. Around such an edge we consider the domain of spectral parameters $z=\tau+\ii \eta$ whose imaginary part $\Im z =\eta$ is slightly larger than $1/N$, i.e.~in the spectral domain
\begin{equation}\label{spectral domains}
\DD^\delta_\gamma\defeq \Set{z\in\DD^\delta|\Im z\ge N^{-1+\gamma} } \qquad \text{with}\qquad \DD^\delta\defeq \Set{\tau+\ii \eta  | \abs{\tau-\tau_0}\le \delta\,, 0<\eta\le 1}
\end{equation}
for  any $\gamma,\delta>0$. 
\begin{definition}[Regular edge]\label{def regular edge}
We call an edge $\tau_0\in\partial\supp\varrho$ \emph{regular} if the limit
\begin{equation}\label{sqrt growth}
\lim_{\supp\varrho\ni\tau\to\tau_0} \frac{\varrho(\tau)}{\sqrt{\abs{\tau-\tau_0}}} = \frac{\gamma_\mathrm{edge}^{3/2}}{\pi}
\end{equation}
exists for some \emph{slope parameter} $\gamma_\mathrm{edge}$ that satisfies $0<c_\ast\le \gamma_\mathrm{edge}\le c^\ast<\infty$ for some constants $c_\ast,c^\ast$.
\end{definition}
\begin{remark}\label{rem:regular_edge_main_results}
We remark that there are several equivalent characterisations of \emph{regular edges}. We chose \eqref{sqrt growth} here because it highlights that the essential prerequisite for Tracy-Widom 
universality is a local square-root singularity. According to the classification result from \cite{shapepaper} it follows that \eqref{sqrt growth} is equivalent\footnote{
In fact, in \cite[Section 7.6]{shapepaper} it is proven that if the self-consistent spectrum $\supp\dens$ has a macroscopic gap next to some $\tau_0 \in \pt\supp\dens$,
then $\dens$ has a square root behaviour at $\tau_0$. Together with Theorem~\ref{thm:dens_close_to_regular_edge} later, this shows that regular edges in the sense of \eqref{sqrt growth} are precisely 
those $\tau_0 \in \pt\supp\dens$ which are adjacent to macroscopic gaps.} to assuming that the gap in $\supp\varrho$ adjacent to $\tau_0$ is of size $\gtrsim 1$. 
\end{remark}	

\begin{theorem}[Edge local law]\label{theorem local law} Let Assumptions \ref{assumption A}--\ref{assumption flatness} and \ref{assumption bounded M} be satisfied for some regular edge $\tau_0 \in \partial \supp \varrho$. Then for any $D,\gamma,\epsilon>0$ and sufficiently small $\delta>0$, there exists some $C<\infty$ depending only on these and the model parameters such that with $G=G(z)$ and $M=M(z)$ we have the isotropic local law,
\begin{subequations}
\begin{equation}\label{iso bulk}
\P \left(\abs{\braket{\vx,(G-M)\vy}}\le N^{\epsilon}\norm{\vx}\norm{\vy}\left(\sqrt{\frac{\varrho}{N\Im z}}+\frac{1}{N\Im z}\right)\quad \text{in}\quad\DD^\delta_\gamma\right) \ge 1 - C N^{-D}
\end{equation}
for all deterministic vectors $\vx,\vy\in \C^N$ and the averaged local law,
\begin{equation}\label{av bulk} 
\P \left(\abs{\braket{B(G-M)}}\le N^{\epsilon}\frac{\norm{B}}{N\Im z}\quad \text{in}\quad\DD^\delta_\gamma\right) \ge 1 - C N^{-D}
\end{equation}
for all deterministic matrices $B\in\C^{N\times N}$. 
Moreover, at a distance at least $N^{-2/3+\epsilon}$ away from the self-consistent spectrum we have the improved averaged local law for any $\epsilon>0$
\begin{equation}\label{av outisde}
\P \left(\abs{\braket{B(G-M)}}\le \frac{N^{\epsilon}\norm{B}}{N\dist( z,\supp\varrho)}\,\,\,\text{in}\,\,\,\Set{z\in\DD^\delta|\frac{\dist( z,\supp\varrho)}{N^{-2/3+\epsilon}}\ge1}\right) \ge 1 - C N^{-D}
\end{equation}
with $C$ also depending on $\epsilon$.
\end{subequations}
\end{theorem}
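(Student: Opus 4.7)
The plan is to deduce the edge local law from two largely independent ingredients: (i) a high-probability bound on the random error $D(z) \defeq I + (z-A+\SS[G(z)])G(z)$, which is provided by the cumulant expansion machinery of \cite{2017arXiv170510661E} and is insensitive to the spectral location of $z$; and (ii) a sharp deterministic stability analysis of the MDE at a regular edge that converts the bound on $D$ into the advertised bounds on $G-M$. Subtracting the MDE for $M$ from the perturbed equation for $G$ yields, to leading order, $\mathcal{B}[G-M] = M D + \text{quadratic in } (G-M)$, where $\mathcal{B} \defeq \Id - M\SS[\genarg]M$ is the stability operator on $\C^{N\times N}$. All nonlinear difficulty is therefore encoded in the invertibility of $\mathcal{B}$ near $\tau_0$.

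I would then exploit the shape analysis carried out in Section~\ref{sec:dens_close_to_edge}: at a regular edge, $\mathcal{B}$ has a single small (real) eigenvalue $\beta(z)$ with left/right eigenmatrices $L(z), V(z)$ of order one, while the rest of its spectrum is bounded away from zero uniformly in $\DD^\delta$; moreover $|\beta(z)| \sim \sqrt{\kappa(\Re z) + \eta}$ with $\kappa(\tau) \defeq \dist(\tau,\supp\varrho)$, and $\Im M$ is comparable to $\dens$. Projecting the perturbed MDE onto this one-dimensional unstable direction reduces the problem to a scalar approximate cubic equation for $\Theta(z) \defeq \braket{L(z)(G-M)}$ of the form
\[
\mu_3\,\Theta^3 + \mu_2\,\Theta^2 + \beta(z)\,\Theta \;=\; \braket{L M D} + \mathcal{E},
\]
with bounded structural coefficients $\mu_2, \mu_3$ inherited from the Taylor expansion of the nonlinearity, and the remainder $\mathcal{E}$ controlled by the stable-subspace part $(\Id - \Pi)(G-M) = \mathcal{B}^{-1}|_{\mathrm{stab}}(\cdots)$. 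Solving this cubic on the branch selected by $\Im z > 0$ gives the sought bound $|\Theta| \lesssim (\eta+\rho)/\sqrt{N\eta}$ (equivalently $\sqrt{\rho/N\eta}+1/(N\eta)$) inside the self-consistent spectrum and the improved bound $|\Theta| \lesssim |\braket{LMD}|/\sqrt{\kappa+\eta}$ outside, since there $\Im M$ is small and the nonlinearity is negligible; the full isotropic/averaged statements follow by combining this with the bounded inverse of $\mathcal{B}$ on the complementary subspace, tested against $\vx,\vy$ or $B$.

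To reach the scale $\eta \ge N^{-1+\gamma}$ I would run the standard continuity (bootstrap) argument in $\eta$: at $\eta=1$ both sides of the cubic are $\mathcal O(1)$, so the correct branch is selected trivially; then one decreases $\eta$ along a grid, using monotonicity of $\eta\mapsto \eta\,\Im\braket{\vx,G(\tau+\ii\eta)\vx}$ and of $\eta\mapsto \eta\,\dens(\tau+\ii\eta)$ to transfer an a priori weak bound into the stronger self-improving bound dictated by the cubic, on the event of high probability where the input on $D$ from \cite{2017arXiv170510661E} holds. The improved averaged law \eqref{av outisde} outside the spectrum drops out of the same cubic in the regime $\kappa \gg N^{-2/3+\epsilon}$. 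The main obstacle is the second ingredient: carrying out the shape and stability analysis of the MDE at regular edges in the fully correlated setting, i.e.\ proving that $\mathcal{B}^{-1}$ decomposes sharply into one unstable direction with $\beta \sim \sqrt{\kappa+\eta}$ plus a uniformly bounded stable complement, and that the cubic coefficients $\mu_2,\mu_3$ have the signs and sizes compatible with \eqref{sqrt growth}. This extends the bulk analysis of \cite{2016arXiv160408188A} and the Wigner-type edge analysis of \cite{2015arXiv150605095A} to general $\SS$, and it is at this step that Assumption~\ref{assumption bounded M} enters essentially.
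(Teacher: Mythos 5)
Your proposal follows essentially the same route as the paper: a high-probability bound on $D=WG+\SS[G]G$ imported unchanged from \cite{2017arXiv170510661E}, a sharp deterministic stability analysis of $\BO=\Id-M\SS[\genarg]M$ at a regular edge, projection of $G-M$ onto the one-dimensional unstable direction of $\BO$ to obtain a scalar equation for $\Theta$, and an $\eta$-bootstrap to descend to $\eta\ge N^{-1+\gamma}$. Two small inaccuracies are worth noting, neither fatal. First, at a regular (square-root) edge the scalar equation is genuinely \emph{quadratic}---$\xi_1\Theta+\xi_2\Theta^2=\landauO{\cdot}$ with $\abs{\xi_1}\sim\sqrt{\kappa+\eta}$ and $\abs{\xi_2}\sim 1$---and the $\Theta^3$ contribution is absorbed into the error; a cubic structure with a structural $\mu_3$ would be the signature of a cusp, not of a square-root edge, so carrying $\mu_3\Theta^3$ as a leading-order term is unnecessary (though harmless, since $\mu_2\neq 0$ dominates it). Second, the small eigenvalue $\beta(z)$ of $\BO$ is not real on $\HC$; the paper's expansion gives $\beta\scalar{P}{B}=\pi\eta\dens^{-1}-2\ii\dens\sigma+\landauO{\cdots}$, so $\beta$ acquires an imaginary part of size $\dens$ whenever $\eta>0$, and one must not rely on its realness when selecting the physical branch (the correct selector is $\Im\Theta\ge 0$, not reality of coefficients). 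With these two corrections your outline matches the paper's proof architecture, including the identification of the MDE edge-shape analysis (Section~\ref{sec:dens_close_to_edge}, culminating in Proposition~\ref{prop stability reference}\eqref{1-CMS bound with flatness}) as the genuinely new ingredient on which Assumption~\ref{assumption bounded M} is required.
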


\begin{corollary}[No eigenvalues outside the support of the self-consistent density]\label{cor no eigenvalues outside}
Under the assumptions of Theorem~\ref{theorem local law} we have for any $\epsilon,D>0$ and sufficiently small $\delta>0$
\[\P\left(\exists\lambda\in\Spec H \,\big| \,\abs{\tau_0-\lambda}\le \delta\,,\;\dist(\lambda,\supp\varrho)\ge N^{-2/3+\epsilon}\right)\le_{\epsilon,D} N^{-D},\]
where $\le_{\epsilon,D}$ means a bound up to some multiplicative constant $C=C(\epsilon,D)$.
\end{corollary}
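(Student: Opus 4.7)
The plan is to derive the corollary by contradiction from the improved averaged local law \eqref{av outisde} with $B=\id$. Suppose some eigenvalue $\lambda \in \Spec H$ lies in the bad region $\mathcal{B} \defeq \{\tau\in\R : \abs{\tau-\tau_0}\le\delta,\ \dist(\tau,\supp\varrho)\ge N^{-2/3+\epsilon}\}$, and fix a small scale $\eta_0 \defeq N^{-K}$ with $K$ large (depending on $\epsilon$ and $D$). At the test point $z_\lambda \defeq \lambda + \ii\eta_0$ the pole of $G$ at $\lambda$ forces
\[
\Im\braket{G(z_\lambda)} \;\ge\; \frac{1}{N}\cdot\frac{\eta_0}{\eta_0^2} \;=\; N^{K-1},
\]
while, since $\Im\braket{M}$ is the Poisson extension of $\pi\varrho$ by \eqref{definition scDOS} and $\varrho$ is a probability density with bounded support (a consequence of Assumptions \ref{assumption A} and \ref{assumption flatness}),
\[
\Im\braket{M(z_\lambda)} \;=\; \int \frac{\eta_0\,\varrho(s)\,\di s}{(s-\lambda)^2 + \eta_0^2} \;\le\; \frac{\eta_0}{\dist(\lambda,\supp\varrho)^2} \;\lesssim\; N^{4/3-2\epsilon-K}.
\]
On the other hand \eqref{av outisde} is applicable at $z_\lambda$ because $\dist(z_\lambda,\supp\varrho)\ge\dist(\lambda,\supp\varrho)\ge N^{-2/3+\epsilon}$, so with overwhelming probability
\[
\abs{\braket{G(z_\lambda)-M(z_\lambda)}} \;\le\; \frac{N^{\epsilon/2}}{N \dist(z_\lambda,\supp\varrho)} \;\le\; N^{-1/3-\epsilon/2}.
\]
For $K$ sufficiently large (say $K>2$) these three estimates are plainly incompatible, yielding the desired contradiction.

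To make the argument uniform in $\lambda$, I would discretize $\mathcal{B}$ by a deterministic grid $\{\tau_j\}_{j\le J}$ of spacing at most $\eta_0$, requiring only $J \le N^{K+\ord(1)}$ points, and apply \eqref{av outisde} at each $z_j \defeq \tau_j + \ii\eta_0$. A union bound over the $J$ grid points costs only a polynomial factor, easily absorbed because Theorem \ref{theorem local law} permits failure probability $N^{-D'}$ for any $D'$; it suffices to pick $D' \ge K + D$. Any hypothetical rogue eigenvalue $\lambda \in \mathcal{B}$ lies within $\eta_0$ of some $\tau_j$, so the pole lower bound survives as $\Im\braket{G(z_j)} \ge \tfrac{1}{2N\eta_0}$ at the nearest grid point, producing the same contradiction there.

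No real obstacle arises beyond bookkeeping; the one ingredient worth checking is the uniform boundedness of $\supp\varrho$, needed so that $\int\varrho = 1$ can be invoked in the Poisson estimate. This follows from Assumption \ref{assumption A} together with the flatness Assumption \ref{assumption flatness}, which implies $\norm{M(z)} \lesssim 1$ for $\Im z \gtrsim 1$ and hence confines $\supp\varrho$ to a compact set independent of $N$.
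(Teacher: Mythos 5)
Your core idea---detect a rogue eigenvalue by the pole it produces in $\Im\braket{G}$ at a nearby test point, and contradict a local-law bound valid outside $\supp\varrho$---is exactly the paper's idea. However, the argument is circular in the context of this paper. You invoke \eqref{av outisde} as your primary estimate, but the paper establishes \eqref{av outisde} \emph{after} Corollary~\ref{cor no eigenvalues outside}: its proof explicitly uses the absence of eigenvalues outside the support (together with rigidity and delocalisation) to bound $\Im\braket{\vx,G\vx}$ before feeding that into Proposition~\ref{prop D bound}. So \eqref{av outisde} is not an available input for this corollary.

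The fix is not as painless as swapping \eqref{av outisde} for the intermediate bound \eqref{local law}, because your choice $\eta_0 = N^{-K}$ with $K$ large is incompatible with \eqref{local law}. For $\Re z\notin\supp\varrho$, \eqref{local law} contains the term $(N\eta)^{-2}(\kappa+\eta)^{-1/2}$, which diverges as $\eta\to 0$; at $\eta_0 = N^{-K}$ with $K$ large that term is enormous, so the bound is vacuous. The paper instead works at the single scale $\eta\sim N^{-2/3}$ (it writes $N^{-4/5}$, but the balance only closes cleanly for $\eta\sim N^{-2/3}$): there the pole gives $\Im\braket{G}\gtrsim (N\eta)^{-1}\sim N^{-1/3}$, while \eqref{local law} and the square-root outside decay $\varrho(z)\sim\eta/\sqrt{\kappa+\eta}$ (Proposition~\ref{prop stability reference}\eqref{prop sqrt edge}) both give bounds of order $N^{-1/3-\epsilon/2}$ once $\kappa\gtrsim N^{-2/3+\epsilon}$, and the $N^{2/K}$ prefactor is absorbed by taking $K$ large. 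No discretization or union bound is needed since \eqref{local law} is already stated uniformly on the domain. Your proposal also omits the square-root-outside decay of $\varrho$; the crude bound $\eta/d^2$ you use is fine at $\eta_0=N^{-K}$, but at the scale $\eta\sim N^{-2/3}$ it exceeds $1$ for small $\epsilon$, so the sharper $\varrho(z)\sim\eta/\sqrt{\kappa+\eta}$ from Proposition~\ref{prop stability reference} is what actually makes the contradiction work at the correct scale.
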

\begin{corollary}[Delocalisation]\label{cor delocalisation}  
Under the assumptions of Theorem \ref{theorem local law} it holds for an $\ell^2$-normalized eigenvector $\bm u$ corresponding to an eigenvalue $\lambda$ of $H$ close to the edge $\tau_0$ that
\[ \sup_{\norm{\vx} = 1} \P\left(\abs{\braket{\vx,\vu}} \ge \frac{N^\epsilon}{\sqrt N}\,\big|\, H\vu=\lambda\vu,\,\norm{\vu}=1,\, \abs{\tau_0-\lambda}\le \delta\right)\le_{\epsilon,D} N^{-D}\]
for any $\epsilon,D>0$ and sufficiently small $\delta >0$.
\end{corollary}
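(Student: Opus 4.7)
\medskip

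\noindent\textbf{Proof plan for Corollary \ref{cor delocalisation}.} The plan is to run the standard delocalisation argument based on the spectral decomposition of the resolvent, with Theorem~\ref{theorem local law} (specifically the isotropic bound \eqref{iso bulk}) and Assumption~\ref{assumption bounded M} supplying the needed a priori control. Fix a unit vector $\vx\in\C^N$ and a small exponent $\epsilon>0$; set $\eta\defeq N^{-1+2\epsilon}$ and consider any eigenpair $(\lambda,\vu)$ of $H$ with $\abs{\tau_0-\lambda}\le\delta$ and $\norm{\vu}=1$. By Corollary~\ref{cor no eigenvalues outside}, we may restrict to the event that $\dist(\lambda,\supp\varrho)\le N^{-2/3+\epsilon}$, losing only $N^{-D}$ in probability. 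On this event the spectral parameter $z\defeq\lambda+\ii\eta$ lies in the domain $\DD^\delta_{2\epsilon}$.

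The spectral decomposition of $G(z)$ in the eigenbasis of $H$ yields the classical lower bound
\begin{equation*}
\Im\braket{\vx,G(z)\vx}=\sum_j\frac{\eta\,\abs{\braket{\vx,\vu_j}}^2}{(\lambda_j-\lambda)^2+\eta^2}\ge\frac{\abs{\braket{\vx,\vu}}^2}{\eta},
\end{equation*}
where $(\lambda_j,\vu_j)$ ranges over the full eigensystem. The strategy is thus to upper bound the left-hand side by a quantity of order one and solve for $\abs{\braket{\vx,\vu}}$. The matching upper bound will come from writing $\braket{\vx,G(z)\vx}=\braket{\vx,M(z)\vx}+\braket{\vx,(G(z)-M(z))\vx}$: the first term is controlled by Assumption~\ref{assumption bounded M}, which gives $\abs{\braket{\vx,M(z)\vx}}\le M_\ast$ throughout $\DD^\delta$ (after shrinking $\delta\le\omega_\ast$), while the second term is estimated by the isotropic local law \eqref{iso bulk}.

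Applied at the single scale $\eta=N^{-1+2\epsilon}$ over all $z=\tau+\ii\eta$ with $\abs{\tau-\tau_0}\le\delta$, the local law holds uniformly with probability at least $1-CN^{-D}$ and produces
\begin{equation*}
\abs{\braket{\vx,(G(z)-M(z))\vx}}\le N^\epsilon\Tuple{\sqrt{\frac{\varrho(z)}{N\eta}}+\frac{1}{N\eta}}\le CN^{-\epsilon/2},
\end{equation*}
using that $\varrho$ is uniformly bounded near the regular edge by the square-root shape analysis of Section~\ref{sec:dens_close_to_edge}. Combining these two bounds gives $\Im\braket{\vx,G(z)\vx}\le M_\ast+o(1)$ simultaneously for every admissible $z=\lambda+\ii\eta$, hence $\abs{\braket{\vx,\vu}}^2\le\eta(M_\ast+o(1))\le N^{-1+2\epsilon}$. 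Adjusting the exponent $\epsilon$ (which is free) delivers the claimed bound $\abs{\braket{\vx,\vu}}\le N^\epsilon/\sqrt N$ on an event of probability at least $1-CN^{-D}$, uniformly over all eigenpairs near $\tau_0$.

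No genuine obstacle arises once Theorem~\ref{theorem local law} is in hand; the only point requiring mild care is ensuring that the event of the local law holds uniformly in $z$ on the line $\Im z=\eta$, so that no separate union bound over the (at most $N$) eigenvalues of $H$ near $\tau_0$ is needed. This is built into the formulation of \eqref{iso bulk} as a statement in the domain $\DD^\delta_\gamma$, so the argument closes cleanly.
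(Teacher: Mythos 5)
Your argument is essentially identical to the paper's proof: use the spectral decomposition to bound $\Im\braket{\vx,G(\lambda+\ii\eta)\vx}$ below by $\abs{\braket{\vx,\vu}}^2/\eta$, and bound it above by $\lesssim1$ via $\norm{M}\le M_\ast$ together with the isotropic local law \eqref{iso bulk} at scale $\eta=N^{-1+2\epsilon}$. The invocation of Corollary~\ref{cor no eigenvalues outside} is superfluous (the hypothesis $\abs{\tau_0-\lambda}\le\delta$ already puts $\lambda+\ii\eta$ in $\DD^\delta_{2\epsilon}$), and the intermediate display claiming the error is $\le CN^{-\epsilon/2}$ is off (with the same $\epsilon$ in both places one only gets $O(1)$, which of course suffices); neither affects the conclusion.
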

\begin{corollary}[Band rigidity and eigenvalue rigidity]\label{cor rigidity}
Under the assumptions of Theorem \ref{theorem local law} the following holds. For any $\epsilon,D>0$ there exists some $C<\infty$ such that for any $\tau\in\R\setminus\supp\varrho$ with $\dist(\tau,\supp\varrho)\ge\epsilon$ the number of eigenvalues less than $\tau$ is with high probability deterministic, i.e.~that
\begin{subequations}
\begin{equation}\label{det num of evs}
\P\bigg(\abs{\Spec H\cap (-\infty,\tau)} = N \int_{-\infty}^\tau\varrho(x)\diff x\bigg)\ge 1- CN^{-D}.
\end{equation}
We also have the following strong form of eigenvalue rigidity in a neighbourhood of a regular edge $\tau_0$. Let $\lambda_1\le \dots\le\lambda_N$ be the ordered eigenvalues of $H$ and denote the index of the $N$-quantile close to energy $\tau\in \interior(\supp \varrho)$ by $k(\tau)\defeq \lceil N\int_{-\infty}^\tau \varrho(x)\diff x\rceil$. It then holds that
\begin{equation}\label{eq rigidity}
\P\left( \sup_{\tau} \abs{\lambda_{k(\tau)} - \tau } \ge\min \bigg\{ \frac{N^\epsilon}{N\abs{\tau-\tau_0}^{1/2}}, \frac{N^\epsilon}{N^{2/3}} \bigg\}\right)\le_{\epsilon,D} N^{-D}
\end{equation}
\end{subequations}
 for any $\epsilon,D>0$ and sufficiently small $\delta>0$, where the supremum is taken over all $\tau\in\supp\varrho$ such that  $\abs{\tau-\tau_0}\le \delta$. 
\end{corollary}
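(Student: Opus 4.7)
The plan is to derive both statements from Theorem~\ref{theorem local law} (in particular the improved bound \eqref{av outisde} outside the spectrum), Corollary~\ref{cor no eigenvalues outside}, and a purely deterministic band rigidity statement for the MDE that is to be established in Section~\ref{sec:band_rigidity}. Concretely, the deterministic ingredient I would use is that for $\tau\in\R\setminus\supp\varrho$ with $\dist(\tau,\supp\varrho)\ge\epsilon$, the number $N\int_{-\infty}^\tau \varrho(x)\diff x$ is an integer. This would be proved by a continuous homotopy in the MDE data: along any deformation keeping the gap of $\supp\varrho$ around $\tau$ open, the partial mass $\int_{-\infty}^\tau\varrho$ depends continuously on the deformation parameter, while Corollary~\ref{cor no eigenvalues outside} combined with the local law forces $N$ times this partial mass to agree with an integer-valued eigenvalue count; continuity in the parameter then pins it to an integer.

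Granted this quantisation, the probabilistic half of \eqref{det num of evs} reduces to showing that the two integers $|\Spec H\cap(-\infty,\tau)|$ and $N\int_{-\infty}^\tau\varrho$ differ by strictly less than one with high probability, since equal integers at distance $<1$ must coincide. To bound the difference I would represent the characteristic function $\mathbf 1_{(-\infty,\tau)}$ via a Helffer–Sj\"ostrand formula, smoothed on a scale $\eta_0=N^{-2/3+\epsilon}$ inside the gap surrounding $\tau$. Since the smoothed cutoff is supported at macroscopic distance from $\supp\varrho$, the relevant contour integral of $\braket{G-M}$ is controlled entirely by \eqref{av outisde}; the remaining error from replacing the sharp indicator by its smoothing is absorbed by Corollary~\ref{cor no eigenvalues outside} ruling out stray eigenvalues in $[\tau-\eta_0,\tau+\eta_0]$. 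Collecting yields an $o(1)$ bound on the integer gap, which is the desired conclusion.

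The eigenvalue rigidity \eqref{eq rigidity} is a standard consequence of the local law near a square-root edge. A Helffer–Sj\"ostrand computation using \eqref{av bulk}--\eqref{av outisde} upgrades the local law to the counting-function bound $\sup_\tau \bigl||\Spec H\cap(-\infty,\tau)|-N\int_{-\infty}^\tau\varrho\bigr|\le N^\epsilon$ with high probability, uniformly for $|\tau-\tau_0|\le\delta$. Inverting the self-consistent counting function via the square-root profile \eqref{sqrt growth} converts this $N^\epsilon$ budget in particles into an $N^\epsilon$ multiple of the local spacing $1/(N\varrho(\tau))$. Since $\varrho(\tau)\sim|\tau-\tau_0|^{1/2}$ as long as $|\tau-\tau_0|\gtrsim N^{-2/3}$ and the spacing saturates at $N^{-2/3}$ closer to the edge, this produces precisely the minimum appearing in \eqref{eq rigidity}.

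The main obstacle, and the genuinely new piece, is the deterministic band rigidity statement; once that integer-valuedness of the band masses is in hand, the remaining arguments are standard applications of Helffer–Sj\"ostrand to the averaged local law together with Corollary~\ref{cor no eigenvalues outside}. Some care is needed in the homotopy argument to guarantee that the relevant gap stays open along the chosen interpolation, which is why the hypothesis $\dist(\tau,\supp\varrho)\ge\epsilon$ (independent of $N$) enters in \eqref{det num of evs}.
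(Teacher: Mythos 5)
Your plan for \eqref{eq rigidity} matches the paper: the local law gives the counting-function
bound with an $N^\epsilon$ uncertainty via Helffer--Sj\"ostrand, inverting the square-root profile
converts it to the minimum in \eqref{eq rigidity}, and \eqref{det num of evs} is what removes the
usual label ambiguity near an internal gap. That part is fine.

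The gap is in the probabilistic half of \eqref{det num of evs}. You want to bound
$\bigl|\,|\Spec H\cap(-\infty,\tau)|-N\int_{-\infty}^\tau\varrho\,\bigr|$ by $o(1)$ using
Helffer--Sj\"ostrand together with \eqref{av outisde}. This cannot work. Stochastic domination
bounds carry an $N^\varepsilon$ factor for every fixed $\varepsilon>0$, so the best the averaged
local law can produce through Helffer--Sj\"ostrand (even with the improvement
$\braket{G-M}\prec (N\dist(z,\supp\varrho))^{-1}$ in the gap) is a bound of size $O_\prec(1)$ on
the counting difference, i.e.~$N^\varepsilon$ for every $\varepsilon$, \emph{not} $o(1)$. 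This is
precisely the usual rigidity statement, and it is not enough to force equality of two integers.
Indeed, the whole point of band rigidity, emphasised in the introduction, is that it is
\emph{strictly stronger} than what the local law can deliver by comparison arguments. No choice of
smoothing scale helps: a mesoscopic scale blows up the $|f''|$ integral, and a macroscopic scale
parks you at the $O_\prec(1)$ barrier.

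For the same reason, your proposed derivation of the integer--valuedness of
$N\int_{-\infty}^\tau\varrho$ is circular: you want to pin the continuously varying partial mass to
the nearest integer by matching it against the random eigenvalue count along a deformation, but
that matching is again only $O_\prec(1)$ precise. The paper's Proposition~\ref{prp:Band mass formula}
avoids this entirely --- it is \emph{purely deterministic}. One uses the flow \eqref{Ht flow} of MDE
data, designed so that $M_t(\tau)=M(\tau)$ is held fixed, represents $\int_{-\infty}^\tau\varrho_t$
as a contour integral of $\braket{M_t}$, and shows by a direct differentiation that this contour
integral is $t$-independent; at $t=1$ the solution $M_1$ is the resolvent of a concrete self-adjoint
matrix, so the mass is manifestly an eigenvalue count divided by $N$. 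No random matrix input is
needed.

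With the integer-valuedness in hand, the paper's mechanism for the probabilistic statement is also
not an analytic comparison but a \emph{topological} one. Along the random matrix flow \eqref{Ht flow}
one shows, via Lemma~\ref{lmm:E away from spectrum} and the no-eigenvalues-outside-the-support
statement, that $\tau$ stays in a macroscopic gap of $\supp\varrho_t$ \emph{and} that with
overwhelming probability no eigenvalue of $H_t$ ever touches $\tau$ on $t\in[0,1]$, cf.~\eqref{No evs cross E}.
Then, by continuity of eigenvalues in $t$, $|\Spec H_t\cap(-\infty,\tau)|$ cannot change along the
flow; at $t=1$ the matrix is deterministic and the count is computed from the MDE at $z=\tau$ and
Proposition~\ref{prp:Band mass formula}. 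This ``no crossing'' argument is what replaces the $o(1)$
bound you were trying to obtain through Helffer--Sj\"ostrand, and it is the genuinely new idea you
would need to import to fix the proof.
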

\begin{remark}[Integer mass]\label{remark integer mass}
Note that \eqref{det num of evs} entails the non trivial fact that for $\tau\not\in\supp\varrho$, $N\int_{-\infty}^\tau \varrho(x) \diff x$ is always an integer, see Proposition~\ref{prp:Band mass formula} below.  Moreover, it then trivially implies that $N\int_a^b\varrho(x)\diff x$ is an integer for each \emph{spectral band} $[a,b]$, i.e.~connected component of $\supp\varrho$. Finally, \eqref{det num of evs} also shows that the number of eigenvalues in each band is given by this integer with overwhelming probability. This is in sharp contrast to invariant $\beta$-ensembles where no such mechanism is present. For example, for an odd number of particles in a symmetric double-well potential, $N\int_{-\infty}^0\varrho(x)\diff x=N/2$ is a half integer.
\end{remark}

The main application of the optimal local law from Theorem \ref{theorem local law} is edge universality, as stated in the following theorem, generalising several previous edge universality results listed in the introduction. For definiteness we only state and prove the result for regular right edges. The corresponding statement for left edges can be proven along the same lines.
\begin{theorem}[Edge universality]\label{thm edge univ}
Under the Assumptions \ref{assumption A}--\ref{assumption bounded M} the following statement holds true. Assume that $\tau_0\in \partial \supp \varrho$ is a right regular edge of $\varrho$ with \emph{slope parameter} $\gamma_\mathrm{edge}$ as in Definition \ref{def regular edge}. The integer (see Remark \ref{remark integer mass}) $i_0\defeq N\int_{-\infty}^{\tau_0}\varrho(x)\diff x$ labels the largest eigenvalue $\lambda_{i_0}$ close to the band  edge $\tau_0$ with high probability. Furthermore, for test functions $F\colon\R^{k+1}\to\R$ such that $\norm{F}_\infty+\norm{\nabla F}_\infty\le C<\infty$ we have 
\begin{equation*}
\abs{\E F\Big(\gamma_\mathrm{edge} N^{2/3}(\lambda_{i_0}-\tau_0),\dots,\gamma_\mathrm{edge} N^{2/3}(\lambda_{i_0-k}-\tau_0)\Big)- \E F\Big( N^{2/3}(\mu_N-2),\dots, N^{2/3}(\mu_{N-k}-2)\Big) }\lesssim N^{-c}
\end{equation*}
for some $c=c_k>0$. Here $\mu_1,\dots,\mu_N$ are the eigenvalues of a standard GUE/GOE matrix, depending on the symmetry class of $H$.
\end{theorem}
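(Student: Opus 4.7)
The proof realises the three-step strategy outlined in the introduction. The starting point is the correlated Ornstein-Uhlenbeck interpolation
\[
H_t = A + \mathrm{e}^{-t/2}(H - A) + \sqrt{1 - \mathrm{e}^{-t}}\, G, \qquad t \ge 0,
\]
where $G$ is an independent centred Gaussian matrix of the same symmetry class as $H$ and with covariance tensor $\SS$ equal to that of $W$. By construction $H_t$ has the same expectation $A$ and the same covariance $\SS$ as $H$ for every $t$, so the MDE \eqref{MDE}, its solution $M(z)$, the density $\varrho$, the edge $\tau_0$, the integer label $i_0=N\int_{-\infty}^{\tau_0}\varrho(x)\dd x$ and the slope $\gamma_\mathrm{edge}$ are all $t$-independent. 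One verifies that Assumptions \ref{assumption A}--\ref{assumption bounded M} hold for $H_t$ uniformly on $t\in[0,1]$, so Theorem \ref{theorem local law} and Corollary \ref{cor rigidity} apply to $H_t$ uniformly in $t$ on this interval.

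With this uniform input I invoke the abstract edge relaxation theorem of \cite{2017arXiv171203881L}: once the optimal edge local law is known along a matrix OU flow, the Gaussian component produced by the flow forces the rescaled top eigenvalues at a regular edge to follow the GUE/GOE Tracy-Widom distribution after time $t\ge N^{-1/3+\sigma}$, for any fixed $\sigma>0$. Applied to $H_t$ this yields the conclusion of the theorem for $H_t$ in place of $H$, with the correct rescaling $\gamma_\mathrm{edge} N^{2/3}(\lambda_{i_0-j}(H_t)-\tau_0)$ dictated by the square-root profile \eqref{sqrt growth} that is frozen along the flow. Band rigidity, Corollary \ref{cor rigidity}, guarantees that the labels $i_0,i_0-1,\dots,i_0-k$ unambiguously identify the top $k+1$ eigenvalues of $H_t$ at or below $\tau_0$ for every such $t$, with overwhelming probability, so the statistics being compared in the theorem truly correspond to the same labelled eigenvalues on both sides.

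The third step is a Green function comparison that transfers the edge statistics from $H_t$ back to $H_0=H$ for times $t\ll N^{-1/6}$. Writing the observable as a smooth function of traces $\braket{B_j G(z_j)}$ and of resolvent entries with $\Im z_j$ slightly above $N^{-2/3}$, I differentiate along the OU flow and expand the resulting expectation in cumulants of $W$. The first and second cumulant terms vanish because the flow preserves expectation and covariance; all higher-order contributions are estimated by the isotropic law \eqref{iso bulk}, the averaged law \eqref{av bulk}, and crucially the improved outside-support bound \eqref{av outisde}, combined with the polynomial decay imposed by Assumption \hyperlink{assumpCD}{(CD)}. Summing these estimates yields $\abs{\E F(H_t)-\E F(H_0)}\le N^{-c}$ for $t\le N^{-1/6-\sigma'}$, and choosing $t$ in the nonempty overlap $N^{-1/3+\sigma}\le t\le N^{-1/6-\sigma'}$ closes the argument.

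The main obstacle is the third step. Cumulant expansions in the correlated setting generate many terms involving resolvent entries at spectral parameters whose imaginary part is comparable to the edge eigenvalue spacing $N^{-2/3}$, and controlling them requires the improved law \eqref{av outisde}, which in turn hinges on the sharp square-root shape of $\varrho$ near $\tau_0$ established in Section~\ref{sec:dens_close_to_edge}. Without band rigidity, Corollary \ref{cor rigidity}, the edge label $i_0$ would not be a well-defined object along the flow and the GFT conclusion could not be interpreted as a statement about a specific indexed eigenvalue; this label-preserving feature is precisely what is absent in multi-cut invariant $\beta$-ensembles, where the particle count per band fluctuates and a stochastic relabelling must be introduced.
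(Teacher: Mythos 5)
Your high-level strategy -- OU interpolation, Dyson Brownian motion via the abstract theorem of \cite{2017arXiv171203881L}, then Green function comparison back to $H$ -- is the same as the paper's, and the remarks on band rigidity fixing the label $i_0$ are on point. However, the proposal has a genuine gap in the DBM step that you would not be able to ignore.

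You argue that because the OU flow $H_t$ preserves $A$ and $\SS$, the quantities $\varrho$, $\tau_0$ and $\gamma_\mathrm{edge}$ are frozen, and then say the Landon--Yau theorem ``applied to $H_t$'' yields Tracy--Widom statistics in the rescaled variables $\gamma_\mathrm{edge} N^{2/3}(\lambda_{i_0-j}(H_t)-\tau_0)$. This skips the essential difficulty. To invoke \cite{2017arXiv171203881L} one must exhibit $H_t$ as $\widetilde H_t + \sqrt{ct}\,U$ with $U$ an independent GOE/GUE matrix (here fullness, Assumption~\ref{assumption fullness}, is used), and the theorem then gives convergence of the eigenvalues of $H_t$ rescaled around the edge $\widetilde\tau_t$ with slope $\widetilde\gamma_t$ of the \emph{free convolution of the empirical spectral density of $\widetilde H_t$} with the semicircle law, as in \eqref{landon yau}. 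These are random, $t$-dependent quantities, and moreover $\widetilde H_t$ has covariance $\SS_t=\SS-ct\,\SS^{\GOE/\GUE}$, so its own MDE density $\varrho_t$ and its free-convolved density are \emph{not} equal to $\varrho$; the edge is not where you think it is ``for free.'' Passing from $(\widetilde\tau_t,\widetilde\gamma_t)$ to the deterministic $(\tau_0,\gamma_\mathrm{edge})$ of $\varrho$ requires the quantitative closeness statements proven in the paper as Claims 1--4 ($\abs{\tau_t-\tau_0}\lesssim t/N$, $\abs{\gamma_t-\gamma}\lesssim (t/N)^{1/4}$, $\abs[0]{\widetilde\tau_t-\tau_t}\prec 1/(Nt)$, $\abs[0]{\widetilde\gamma_t-\gamma_t}\prec 1/(Nt^3)$), which themselves rely on the improved outside-the-spectrum local law \eqref{av outisde} and its derivative version in Lemma~\ref{hs local law}, together with the shape analysis of Section~\ref{sec:dens_close_to_edge} applied to the perturbed MDE \eqref{MDE t}. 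None of this is in your write-up, and without it you cannot conclude \eqref{DBM result}; the universality you obtain is only with respect to the random, $t$-dependent centering and scaling.

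The GFT step is described in the right spirit (first- and second-order cancellation by stationarity of $(A,\SS)$, higher cumulants controlled by the local laws and Assumption~(CD), overlap $N^{-1/3+\epsilon}\le t\ll N^{-1/6}$), although the observable the paper actually differentiates is the smoothed counting function of \eqref{edge k pt E comp}, an integral of $\Im\Tr G$ over a window $[\,x_iN^{-2/3}\pm l,\,N^{-2/3+\epsilon}\,]$ at imaginary part $\eta=N^{-2/3-\epsilon}$, not simply a smooth function of a few traces at fixed spectral parameters; this matters for the power counting. Overall: correct scaffolding, but the central content of the paper's Section~\ref{sec proof edge univ}, namely the comparison of the random free-convolution edge parameters with the deterministic ones, is missing.
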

From Theorem \ref{thm edge univ} we can immediately conclude that the eigenvalues of $H$ near the regular edges follow the Tracy-Widom distribution. We remark that the direct analogue of Theorem \ref{thm edge univ} does not hold true for invariant $\beta$-ensembles with a \emph{multi-cut} density. This is due to the fact that the number of particles close to a band of the self-consistent density, commonly known as the \emph{filling fraction}, is known to be a fluctuating quantity for general classes of potentials. We refer the reader to \cite{MR1790279} for a description of this phenomenon, to \cite{MR2268864,MR3063494} for non-Gaussian linear statistics in the multi-cut regime and to \cite{2013arXiv1303.1045B} for results on the fluctuations of filling fractions. Variants of Theorem \ref{thm edge univ} which allow for a relabelling of eigenvalues for invariant $\beta$-ensembles can be found in \cite{MR2012268,MR3756421}.

\section{Proof of the local law}\label{sec proof local law}
The proof of a local law consists of three largely separate arguments. The first part concerns the analysis of the \emph{stability operator}
\begin{equation} \label{eq:def_BO} 
\BO[R] \defeq R - M\SS[R]M
\end{equation} 
for $R \in \C^{N\times N}$,  
 and shape analysis of the solution $M$ to \eqref{MDE}. The second part is proving that the resolvent $G$ is indeed an approximate solution to \eqref{MDE} in the sense that 
\begin{equation}\label{D def}
D\defeq \id+(z-A+\SS[G])G = WG+\SS[G]G
\end{equation} 
is small. Finally, the third part consists of a bootstrap argument starting in the domain $\DD_1^\delta$ and iteratively increasing the domain to $\DD_\gamma^\delta$ while maintaining the desired bound on $G-M$. 

\subsection{Stability}\label{sec stability}
From \eqref{MDE} and \eqref{D def}, we see that the difference between $G$ and $M$ is described by the relation 
\begin{equation}\label{G-M D eq}
\BO[G-M]=-MD+M\SS[G-M](G-M).
\end{equation}
 To prove estimates on $G-M$ we need to analyse $\BO$, the stability operator. Near the edge we will demonstrate
that  $\BO$ has a very small (in absolute value) simple eigenvalue, that we will denote by $\beta$, and it turns out that
$\beta$ is well separated away from the rest of the spectrum of $\BO$. Let $P$ and $B$ denote the  corresponding
left and right eigenvectors of $\BO$,  i.e.~$\BO^*[P] = \bar \beta P$ and $\BO[B] = \beta B$,  and we will specify their normalisation later.
 Note that $\BO$ is typically not self-adjoint, so $P\ne B$. Since $\beta$ is small, $\BO^{-1}$ 
 is unstable in the direction of the eigenspace of $\beta$.
 We therefore   separate this unstable direction by  writing $G-M = \Theta B + \mbox{\emph{Error}}$ where 
 \begin{equation}\label{def:Theta}
 \Theta\defeq \frac{\braket{P,G-M}}{\braket{P,B}}
 \end{equation}
  is the key quantity  and the error term lies
 in spectral subspace complementary to $B$. We will then 
   establish bounds in terms of $\Theta$ 
and $D$ from \eqref{G-M D eq}. 
We note that this separation is not necessary in the bulk regime studied in \cite{2017arXiv170510661E},  where the stability operator is bounded 
in every direction,  which explains the additional complexity of the proof of 
Theorem~\ref{theorem local law} compared to the bulk local law in \cite{2017arXiv170510661E}.

The reader should not be confused by the term ``eigenvector'' in the context of operators $\C^{N\times N} \to 
\C^{N \times N}$ as eigenvectors are in fact matrices in this setting, e.g.~the eigenvectors $P$ and $B$ 
of $\BO$ above are actually matrices in $\C^{N\times N}$.

We begin by collecting some qualitative and quantitative information about the MDE and its stability operator,  which will be proven in Section~\ref{subsec:proof_of_pro_stab_mde} below. 
We note that \eqref{prop31 unique} was first obtained in \cite{MR2376207} and \eqref{prop31 dens} goes back to \cite{2016arXiv160408188A}. 
\begin{proposition}[Stability of MDE and properties of the solution]\label{prop stability reference} The following hold true under Assumption \ref{assumption A}, \ref{assumption flatness} 
and \ref{assumption bounded M} for some $\tau_0 \in \R$.  
\begin{enumerate}[(i)]
\item\label{prop31 unique} The MDE \eqref{MDE} has a unique solution $M=M(z)$ for all $z\in\HC$ and moreover the map $z\mapsto M(z)$ is holomorphic.
\item\label{prop31 dens} \label{mu def} The holomorphic function $\braket{M}\colon \HC\to\HC$ is the Stieltjes transform of a compactly supported probability measure with continuous density $\varrho\colon\R\to[0,\infty)$ given by \eqref{definition scDOS}. Moreover, $\varrho$ is real analytic on the open set $\set{\varrho>0}$.
\end{enumerate}
If $\tau_0\in\pt\supp\dens$ is a regular edge then there is $\delta_* \sim 1$ such that, 
for all $z \in\Hb$ satisfying $\abs{z- \tau_0} \leq \delta_*$, we have 
\begin{enumerate}[(i)]
\addtocounter{enumi}{2}
\item \label{prop sqrt edge} The harmonic extension of the self-consistent density of states 
scales like  
 \[ \dens(z) \sim \begin{cases} \sqrt{\kappa + \eta}, & \text{if } \tau \in \supp\dens, \\  \eta/\sqrt{\kappa + \eta}, & \text{if } \tau \notin \supp\dens, \end{cases}\]
where $\tau = \Re z$, $\eta = \Im z$ and $\kappa\defeq\abs{\tau-\tau_0}$. 
\item\label{1-CMS bound with flatness} 
There exist $P,B \in \C^{N\times N}$  left and right eigenvectors of $\BO$  
such that 
\[\begin{aligned} 
\normsp{\BO^{-1}} & \lesssim (\kappa+\eta)^{-1/2}, \qquad &\normsp{\BO^{-1}\cQ}+\norm{B}+\norm{P}& \lesssim 1, &\\ 
\abs{\beta}&\sim\sqrt{\kappa+\eta},\quad &\abs{\braket{P,M\SS[B]B}}& \sim  1, &\qquad \abs{\scalar{P}{B}} \sim 1 ,
\end{aligned}\]
where $\cQ\defeq 1-\cP$ and $\cP\defeq \braket{P,\cdot}B/\braket{P,B}$ are spectral projections of $\BO$.  
\end{enumerate}
\end{proposition}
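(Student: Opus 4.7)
The plan is to dispatch parts (i) and (ii) by citation, and to derive (iii) and (iv) together from a perturbative shape analysis of the MDE near the regular edge $\tau_0$, as developed in Section~\ref{sec:dens_close_to_edge}. Unique solvability of \eqref{MDE} under the side condition $\Im M > 0$ is the result of \cite{MR2376207}, and holomorphicity of $z \mapsto M(z)$ follows from the implicit function theorem, using that $\BO$ is bounded invertible on $\HC$ (a consequence of $\Im M > 0$ combined with Assumption~\ref{assumption flatness}). The Stieltjes-transform representation of $\braket{M}$ and continuity of $\varrho$ on $\HC \cup \R$ are \cite[Proposition~2.2]{2016arXiv160408188A}. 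Real analyticity of $\varrho$ on $\{\varrho > 0\}$ again follows from the implicit function theorem, since at such points $\BO$ remains invertible up to the real axis.

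For (iii) and (iv), Assumption~\ref{assumption bounded M} allows us to define the boundary value $M_0 \defeq \lim_{\eta \searrow 0} M(\tau_0 + \ii \eta)$, which satisfies \eqref{MDE} at $\tau_0$. The stability operator $\BO_0$ at $M_0$ degenerates: the regular-edge hypothesis (equivalently, a macroscopic gap adjacent to $\tau_0$, cf.\ Remark~\ref{rem:regular_edge_main_results}) ensures that $0$ is a simple isolated eigenvalue of $\BO_0$, separated from the rest of its spectrum by $\sim 1$. Pick right and left null vectors $B, P \in \C^{N\times N}$ of $\BO_0$, normalised so that $\braket{P,B} \sim 1$ and $\norm{B}, \norm{P} \lesssim 1$. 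By analytic perturbation theory these extend to holomorphic $B(z), P(z)$ and a simple eigenvalue $\beta(z)$ of $\BO(z)$ for $z$ near $\tau_0$; the preserved spectral gap yields $\normsp{\BO^{-1}\cQ} \lesssim 1$ at once.

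To pin down $\abs{\beta} \sim \sqrt{\kappa+\eta}$ and the density scaling in (iii), I would perform a scalar reduction of \eqref{MDE}. Writing $M(z) = M_0 + \alpha(z) B + E(z)$ with $E(z) \in \Ran\cQ$, manipulation of \eqref{MDE} around $(M_0, \tau_0)$ gives $\BO_0[M-M_0] = (z-\tau_0) M_0^2 + \text{(bilinear and quadratic corrections in } M-M_0\text{)}$; the $\cQ$-projection fixes $E(z)$ as a contraction of order $\landauO{\abs{z-\tau_0} + \abs{\alpha}^2}$, while the $\cP$-projection (obtained by pairing with $P$) reduces to a scalar implicit equation of the form
\begin{equation*}
c_1(z-\tau_0) + c_2\,\alpha(z)^2 = \landauO{\abs{\alpha}^3 + \abs{z-\tau_0}\abs{\alpha}}, \qquad c_1, c_2 \sim 1,
\end{equation*}
where $c_2 \propto \braket{P, M_0 \SS[B] B}/\braket{P,B}$. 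The non-degeneracy $\abs{c_2} \sim 1$ is exactly the $\abs{\braket{P, M\SS[B]B}} \sim 1$ assertion, and it encodes the regular-edge hypothesis through the bounded slope parameter $\gamma_{\mathrm{edge}}$. Selecting the branch compatible with $\Im M > 0$ gives $\alpha(z) \sim \sqrt{z-\tau_0}$ with the correct phase, which via $\varrho = \pi^{-1}\Im\braket{M}$ produces the scaling in (iii). First-order perturbation theory applied to $\BO(z) - \BO_0 = \landauO{\alpha}$ then yields $\beta(z) \sim \alpha(z) \sim \sqrt{\kappa+\eta}$, and the rank-one decomposition $\BO^{-1} = \beta^{-1}\cP + \BO^{-1}\cQ$ delivers $\normsp{\BO^{-1}} \sim (\kappa+\eta)^{-1/2}$.

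The main obstacle is the scalar reduction above: carrying out the expansion with uniform quantitative control on all error terms as $z$ ranges over $\abs{z-\tau_0} \le \delta_\ast$, and verifying $\abs{\braket{P, M\SS[B]B}} \sim 1$ at every regular edge rather than only at a generic one. These are precisely the issues addressed in Section~\ref{sec:dens_close_to_edge}, building on the singularity classification of \cite{shapepaper}.
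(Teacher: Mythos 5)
Your overall strategy—scalar reduction of the MDE to a quadratic equation for the unstable direction of $\BO$, combined with analytic perturbation theory—is the same one the paper uses. But you treat as given the single hardest technical fact of the whole analysis: namely, that the non-selfadjoint operator $\BO_0 = \Id - \cC_{M_0}\SS$ has exactly one eigenvalue near $0$, that this eigenvalue is simple, and that the rest of $\spec(\BO_0)$ is a distance $\gtrsim 1$ away. You assert this ``ensures'' from the regular-edge hypothesis and the macroscopic gap in $\supp\varrho$, but it does not. A gap in the \emph{support of $\varrho$} is a statement about the scalar function $\braket{M}$; it says nothing \emph{a priori} about the spectrum of the $\C^{N\times N}\to\C^{N\times N}$ operator $\BO$. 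Establishing this spectral structure is exactly why the paper introduces the balanced polar decomposition $M = Q^* U Q$ of \eqref{eq:balanced_polar_decomposition}, the resulting representation $\BO = \cC_{Q^*,Q}\cC_U(\cC_U^* - \cF)\cC_{Q^*,Q}^{-1}$ of \eqref{eq:representation_BO} with $\cF$ self-adjoint and positivity-preserving, and a Perron--Frobenius analysis of $\cF$ (Lemma~\ref{lem:prop_F_small_dens}, Facts \eqref{item:F_simple_eigenvalue}--\eqref{item:eigenvector_approx}), followed by an interpolation and perturbation argument transferring the gap from $\Id - \cF$ to $\BO$. Without that input, the analytic perturbation theory you want to invoke has no base case; you cannot even define $\cP$, $\cQ$, $B$, $P$, or $\beta$ cleanly, let alone conclude $\normsp{\BO^{-1}\cQ}\lesssim 1$.

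The second gap, which you do flag, is the non-degeneracy $\abs{\braket{P,M\SS[B]B}}\sim 1$. Your framing—that this ``encodes the regular-edge hypothesis through $\gamma_{\mathrm{edge}}$''—reverses the logical order and is close to circular. The paper proves it in Proposition~\ref{pro:sigma_sim_1} by a non-obvious route: differentiating the isolated eigenvalue $\beta$ along the \emph{real line outside} $\supp\varrho$, bounding $\abs{\partial_\omega(\beta^2)}$, and comparing against the Stieltjes-transform identity $\abs{\beta(\tau_0+\omega)} \sim (\int_0^{\delta}\varrho(\tau_0-\omega')(\omega'+\omega)^{-2}\di\omega')^{-1}$, into which the slope hypothesis \eqref{sqrt growth} enters. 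This is not the ``uniform quantitative control on error terms'' you describe; it is a separate argument, and it needs the $1/3$-Hölder continuity of $M$ (Corollary~\ref{cor:hoelder_1_3}) and the eigenvalue formula for the auxiliary operator $\BO_*$ (Lemma~\ref{lem:area_small_density}) as inputs. In short: parts (i)--(ii) are fine by citation, but your treatment of (iii)--(iv) presupposes the answers to the two most substantive questions in Section~\ref{sec:dens_close_to_edge}.
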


We now design a suitable norm following \cite{2017arXiv170510661E}. For cumulants of matrix elements $\kappa(w_{ab},w_{cd})$ we use the short-hand notation $\kappa(ab,cd)$. We also use the short-hand notation $\kappa(\vx b,cd)$ for the $\vx=(x_a)_{a\in[N]}$-weighted linear combination $\sum_a x_a \kappa(ab,cd)$ of such cumulants. We use the notation that replacing an index in a scalar quantity by a dot ($\cdot$) refers to the corresponding vector, e.g.~$A_{a\cdot}$ is a short-hand notation for the vector $(A_{ab})_{b\in[N]}$. We fix two vectors $\vx,\vy$ and some large integer $K$ and define the sets
\begin{align*}
I_0&\defeq\set{\vx,\vy}\cup\Set{e_a, P^\ast_{a\cdot} |a\in [N]},\\
 I_{k+1}&\defeq I_k\cup \Set{M\vu|\vu\in I_k}\cup \Set{\kappa_c((M\vu)a,b\cdot),\kappa_d((M\vu)a,\cdot b)|\vu\in I_k,\,a,b\in [N]},
 \end{align*}
where $\kappa_c+\kappa_d=\kappa$ is a decomposition of $\kappa$ according to the Hermitian symmetry\footnote{If $h_{ab}$ is strongly correlated with $h_{cd}$ then, by Hermitian symmetry, it is also strongly correlated with $h_{dc}=\overline{h_{cd}}$. Therefore it is natural to split the covariance into a \emph{direct} and \emph{cross} contribution. The precise splitting $\kappa=\kappa_c+\kappa_d$ is chosen via an optimisation problem; the precise definition is irrelevant for the current proof, see \cite[Remark 2.8]{2017arXiv170510661E} for more details.}. Due to \eqref{cor decay} such a decomposition exists in a way that the operator norms of the matrices $\norm{\kappa_d(\vx a,\cdot b)}$ and $\norm{\kappa_c(\vx a,b\cdot)}$, indexed by $(a,b)$, are bounded uniformly in $\vx$ with $\norm{\vx}\le 1$. We now define the norm
\[\norm{R}_\ast = \norm{R}_\ast^{K,\vx,\vy} \defeq \sum_{0\le k< K}N^{-k/2K} \norm{R}_{I_k} + N^{-1/2} \max_{\vu\in I_K}\frac{\norm{R_{\cdot\vu}}}{\norm{\vu}},\qquad \norm{R}_I \defeq \max_{\vu,\vv\in I} \frac{\abs{R_{\vu\vv}}}{\norm{\vu}\norm{\vv}}. \]
We note that the sets $I_k$ and thereby also the norm $\norm{\cdot}_\ast$ depend implicitly on the spectral parameter $z$ via $M$ and $P$. 
\begin{remark}\label{ast norm apology}
Compared to \cite{2017arXiv170510661E}, the sets $I_k$ contain some additional vectors generated by the vectors of the form $P_{a\cdot}^\ast$ in $I_0$. This addition is necessary to control the spectral projection $\cP$ in the $\norm{\cdot}_\ast$-norm. We note, however, that the precise form of the sets $I_k$ were not important for the proofs in \cite{2017arXiv170510661E}. It was only used that these sets contain deterministic vectors, and that their cardinality grows at most as some finite power $\abs{I_k}\lesssim N^{C_k}$ of $N$.
\end{remark}
In terms of this norm we obtain the following easy estimate on $G-M$ in terms of its projection $\Theta$ 
 onto the unstable direction of  the stability operator $\BO$. 
\begin{proposition}\label{prop stability}
For sufficiently small $\delta$ and fixed $z$ such that $\norm{G-M}_\ast\lesssim N^{-3/K}$ there are deterministic matrices $R_1,R_2$ with norm $\lesssim 1$ such that
\begin{subequations}
\begin{equation}\label{G-M eq}
G-M = \Theta B - \BO^{-1}\cQ[MD] + \mathcal E,\qquad \norm{\mathcal E}_\ast\lesssim N^{2/K} (\abs{\Theta}^2+\norm{D}_\ast^2),\end{equation}
with an error term $\mathcal E$, where $\Theta$, defined in \eqref{def:Theta}, satisfies the approximate quadratic equation
\begin{equation}\label{quadratic eq}
\xi_1\Theta + \xi_2\Theta^2 = \landauO{N^{2/K}\norm{D}_\ast^2+\abs{\braket{R_1D}}+\abs{\braket{R_2D}}}\quad\text{with}\quad\abs{\xi_1}\sim \sqrt{\eta+\kappa},\quad \abs{\xi_2} \sim 1
\end{equation}
and any implied constants are uniform in $\vx,\vy$ and $z\in\DD^\delta$. 
\end{subequations}
\end{proposition}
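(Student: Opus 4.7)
The plan is to project the identity \eqref{G-M D eq}, $\BO[G-M]=-MD+M\SS[G-M](G-M)$, onto the one-dimensional unstable eigenspace of $\BO$ and its complement. Since $P,B$ are left and right eigenvectors of $\BO$ for the isolated near-zero eigenvalue $\beta$ (Proposition~\ref{prop stability reference}), the spectral projection $\cP=\braket{P,\genarg}B/\braket{P,B}$ and its complement $\cQ=\Id-\cP$ both commute with $\BO$. Applying $\cQ$ to \eqref{G-M D eq} and inverting, using $\normsp{\BO^{-1}\cQ}\lesssim 1$, yields $\cQ(G-M)=-\BO^{-1}\cQ[MD]+\mathcal E$ with $\mathcal E\defeq\BO^{-1}\cQ[M\SS[G-M](G-M)]$. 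Combined with $\cP(G-M)=\Theta B$ this produces the asserted decomposition in \eqref{G-M eq}.

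To bound $\mathcal E$ in $\norm\cdot_\ast$ I use two properties of the norm: (a) the hierarchy $I_k$ is engineered (following \cite{2017arXiv170510661E}) so that the bilinear operation $(X,Y)\mapsto\SS[X]Y$ loses at most a factor $N^{1/K}$ per application of $\norm\cdot_\ast$; and (b) the inclusion of the rows $P^\ast_{a\cdot}$ in $I_0$ (Remark~\ref{ast norm apology}) makes $\cP$, and hence $\cQ$, bounded in $\norm\cdot_\ast$. Together with $\norm M,\norm P,\norm B\lesssim 1$ these give $\norm{\mathcal E}_\ast\lesssim N^{2/K}\norm{G-M}_\ast^2$. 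Substituting into $\norm{G-M}_\ast\lesssim\abs\Theta+\norm D_\ast+\norm{\mathcal E}_\ast$ and absorbing the quadratic error using the smallness hypothesis $\norm{G-M}_\ast\lesssim N^{-3/K}$ yields the claimed bound $\norm{\mathcal E}_\ast\lesssim N^{2/K}(\abs\Theta^2+\norm D_\ast^2)$.

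For the scalar equation I pair \eqref{G-M D eq} with $P$; using $\braket{P,\BO[G-M]}=\beta\braket{P,G-M}=\beta\braket{P,B}\Theta$ gives
\begin{equation*}
 \beta\braket{P,B}\Theta=-\braket{P,MD}+\braket{P,M\SS[G-M](G-M)}.
\end{equation*}
Writing $G-M=\Theta B+V$ with $V=-\BO^{-1}\cQ[MD]+\mathcal E$ and expanding the bilinear term produces: (i) a $\Theta^2$-piece giving the quadratic coefficient $\xi_2=\braket{P,M\SS[B]B}/\braket{P,B}$ of size $\sim 1$ by Proposition~\ref{prop stability reference}; (ii) the linear coefficient $\xi_1=\beta$ of size $\sim\sqrt{\kappa+\eta}$; (iii) the direct linear-in-$D$ contribution $-\braket{P,MD}/\braket{P,B}$, which is of the form $\braket{R_1 D}$ with $\norm{R_1}\lesssim 1$; (iv) two $\Theta V$-cross terms $\Theta\braket{\tilde R D}$ that, by Young's inequality $\abs{\Theta\braket{\tilde R D}}\le\tfrac14\abs{\xi_2}\abs\Theta^2+C\abs{\braket{\tilde R D}}^2$, are split into a piece absorbed into $\xi_2\Theta^2$ (halving $\xi_2$ while keeping it $\sim 1$) and a piece contributing the $\abs{\braket{R_2 D}}$ summand on the right; (v) the pure $V^2$-piece and the $\mathcal E$-remainders, which are controlled by $N^{2/K}\norm D_\ast^2$ by the bound on $\mathcal E$ from the previous step.

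The main obstacle is that, unlike in the bulk regime of \cite{2017arXiv170510661E}, the stability operator $\BO$ is no longer uniformly invertible at the edge: the eigenvalue $\beta\sim\sqrt{\kappa+\eta}$ degenerates. Performing the split into stable and unstable subspaces while simultaneously ensuring that $\norm\cdot_\ast$ controls both $\cQ$ and the $\SS$-bilinear map (without inflating the $N^{c/K}$-loss bookkeeping) is the main technical point. This relies crucially on the quantitative estimates $\normsp{\BO^{-1}\cQ}\lesssim 1$, $\abs{\braket{P,B}}\sim 1$, $\abs{\braket{P,M\SS[B]B}}\sim 1$ and $\norm P\lesssim 1$ from Proposition~\ref{prop stability reference}, and on the edge-specific enlargement of $I_0$ noted in Remark~\ref{ast norm apology}.
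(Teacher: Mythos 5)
Your overall strategy — split $G-M$ along the range of $\cQ$ and the unstable direction $B$, invert $\BO$ on $\Ran\cQ$, iterate the quadratic self-improvement of $\norm{\cdot}_\ast$ under the smallness hypothesis to obtain \eqref{G-M eq}, then project \eqref{G-M D eq} onto $P$ for the scalar quadratic equation — is exactly the paper's proof; your items (i)--(iii) and (v) match the paper's computation of $\mu_0,\mu_1,\mu_2$.

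Step (iv), however, has a gap. Young's inequality hands you $C\abs{\braket{\tilde R D}}^2$, not the first-power term $\abs{\braket{R_2 D}}$ that the proposition asserts; relaxing $\abs{\cdot}^2$ to $\abs{\cdot}$ would require an a priori bound $\abs{\braket{R_2 D}}\lesssim 1$, which is not supplied by the hypothesis $\norm{G-M}_\ast\lesssim N^{-3/K}$ (the $\norm{\cdot}_\ast$-norm gives isotropic control and does not dominate averaged traces such as $\braket{R_2 D}$). Moreover the phrase ``absorbed into $\xi_2\Theta^2$, halving $\xi_2$'' mixes a real bound $\tfrac14\abs{\xi_2}\abs{\Theta}^2$ with the complex quantity $\xi_2\Theta^2$, which can be made rigorous only by redistributing the error term as a $\Theta$-dependent perturbation of $\xi_2$ — extra care that is entirely unnecessary here. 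The paper's route is cleaner and exact: the smallness hypothesis together with $\abs{\braket{P,R}}\le\norm{P}\norm{R}_\ast$ gives $\abs{\Theta}\le\norm{P}\norm{G-M}_\ast/\abs{\braket{P,B}}\lesssim N^{-3/K}\ll 1$, so $\abs{\Theta\braket{R_2 D}}\lesssim\abs{\braket{R_2 D}}$ can simply be moved wholesale into the error; the only quantity folded into $\xi_2$ is the genuinely cubic remainder $N^{2/K}\abs{\Theta}^3\lesssim N^{-1/K}\abs{\Theta}^2$, which leaves $\abs{\xi_2}\sim 1$. Replacing your (iv) with this direct bound reproduces the proposition as stated.
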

\begin{proof}
We begin with an auxiliary lemma about the $\norm{\cdot}_\ast$-norm of some important quantities, the proof of which we defer to the appendix.
\begin{lemma}\label{lemma ast norm bounds}
Depending only on the model parameters we have the estimates for any $R\in\C^{N\times N}$,
\begin{equation*}
\norm{M\SS[R]R}_{\ast}\lesssim N^{1/2K} \norm{R}_\ast^2,\qquad \norm{MR}_{\ast}\lesssim N^{1/2K} \norm{R}_\ast,\qquad \norm{\cQ}_{\ast\to\ast}\lesssim 1,\qquad \norm{\BO^{-1}\cQ}_{\ast\to\ast} \lesssim 1.
\end{equation*}
\end{lemma}
Decomposing $G-M=\cP[G-M]+\cQ[G-M]$ and inverting $\BO$ in \eqref{G-M D eq} on the range of $\cQ$ yields
\begin{equation*}
\begin{split}
G-M&=\Theta B+\cQ[G-M]=\Theta B-\BO^{-1}\cQ[MD]+\landauO{N^{1/2K}\norm{G-M}_\ast^2}\\
&=\Theta B-\BO^{-1}\cQ[MD]+\landauO{N^{3/2K}(\abs{\Theta}^2+\norm{D}_\ast^2)},
\end{split}
\end{equation*}
where $\landauO{\cdot}$ is meant with respect to the $\norm{\cdot}_\ast$-norm and the second equality followed by iteration, Lemma \ref{lemma ast norm bounds} and the assumption on $\norm{G-M}_\ast$. Going back to the original equation \eqref{G-M D eq} we find
\begin{equation*}
\beta \Theta B + \BO\cQ[G-M] = -MD+M\SS[\Theta B-\BO^{-1}\cQ[MD]](\Theta B-\BO^{-1}\cQ[MD]) + \landauO{N^{2/K}(\abs{\Theta}^3+\norm{D}_\ast^3)}
\end{equation*}
and thus by projecting with $\cP$ we arrive at the quadratic equation 
\begin{equation*}\begin{split}
  \mu_0 - \mu_1 \Theta +\mu_2\Theta^2&=\landauO{N^{2/K}(\abs{\Theta}^3+\norm{D}_\ast^3)},\qquad \mu_0 = \braket{P,M\SS[\BO^{-1}\cQ[MD]]\BO^{-1}\cQ[MD]-MD}, \\ 
  \mu_1 &= \braket{P,M\SS[B]\BO^{-1}\cQ[MD]+M\SS[\BO^{-1}\cQ[MD]]B}+\beta\braket{P,B},  \qquad \mu_2 =\braket{P,M\SS[B]B}.
\end{split}\end{equation*}
We now proceed by analysing the coefficients in this quadratic equation. We estimate the quadratic term in $\mu_0$ directly by $N^{2/K}\norm{D}_\ast^2$, while we write the linear term as $\braket{R_1D}$ for the  deterministic $R_1\defeq - M^\ast P$ with $\norm{R_1}\lesssim 1$. For the linear coefficient $\mu_1$ we similarly find a deterministic matrix $R_2$ such that $\norm{R_2}\lesssim 1$ and $\mu_1=\braket{R_2D}+\beta\braket{P,B}$. Finally, we find from Proposition \ref{prop stability reference}\eqref{1-CMS bound with flatness} that $\abs{\mu_2}\sim 1$ and $\abs{\beta\braket{P,B}}\sim \sqrt{\kappa+\eta}$. By incorporating the $\abs{\Theta}N^{2/K}$ term into $\xi_2$ we obtain \eqref{quadratic eq}. Here $\delta$ has to be chosen sufficiently small such that Proposition \ref{prop stability reference} is applicable. 
\end{proof}

\subsection{Probabilistic bound}
We now collect the averaged and isotropic bound on $D$ from \cite{2017arXiv170510661E}. We first introduce a commonly used (see, e.g.~\cite{MR3068390}) notion of high-probability bound.
\begin{definition}[Stochastic Domination]\label{def:stochDom}
If \[X=\left( X^{(N)}(u) \,\lvert\, N\in\N, u\in U^{(N)} \right)\quad\text{and}\quad Y=\left( Y^{(N)}(u) \,\lvert\, N\in\N, u\in U^{(N)} \right)\] are families of non-negative random variables indexed by $N$, and possibly some parameter $u$, then we say that $X$ is stochastically dominated by $Y$, if for all $\epsilon, D>0$ we have \[\sup_{u\in U^{(N)}} \P\left[X^{(N)}(u)>N^\epsilon  Y^{(N)}(u)\right]\leq N^{-D}\] for large enough $N\geq N_0(\epsilon,D)$. In this case we use the notation $X\prec Y$. 
\end{definition}
It can be checked (see \cite[Lemma 4.4]{MR3068390}) that $\prec$ satisfies the usual arithmetic properties, e.g.~if $X_1\prec Y_1$ and $X_2\prec Y_2$, then also  $X_1+X_2\prec Y_1 +Y_2$ and  $X_1X_2\prec Y_1 Y_2$.
To formulate the result compactly we also introduce the notations 
\begin{equation}\label{group prec}\begin{split}
 \abs{R}\prec \Lambda\text{ in $\DD$} \,\,&\iff\,\, \norm{R}_\ast^{K,\vx,\vy}\prec \Lambda \text{ unif.~in } \vx,\vy\text{ and $z\in\DD$}, \\ 
  \abs{R}_\av\prec\Lambda\text{ in $\DD$}\,\,&\iff\,\, \frac{\abs{\braket{BR}}}{\norm{B}}\prec\Lambda \text{ unif.~in $B$ and $z\in\DD$}
 \end{split}
\end{equation} 
for random matrices $R=R(z)$ and a deterministic control parameter $\Lambda=\Lambda(z)$, where $B,\vx,\vy$ are deterministic matrices and vectors. We also define an isotropic high-moment norm, already used in \cite{2017arXiv170510661E}, for $p\ge 1$ and a random matrix $R$,
\begin{equation*}
\norm{R}_p \defeq \sup_{\vx,\vy} \frac{\big(\E \abs{\braket{\vx,R\vy}}^p\big)^{1/p}}{\norm{\vx}\norm{\vy}}.
\end{equation*}
\begin{proposition}[Bound on the Error]\label{prop D bound}
Under the Assumptions \ref{assumption A}--\ref{assumption flatness} there exists a constant $C$ such that for any fixed vectors $\vx,\vy$ and matrices $B$ and spectral parameters $z\in\DD^\delta$, and any $p\ge 1$, $\epsilon>0$,
\begin{subequations}
\begin{gather}
\begin{aligned}\label{bootstrapping step}
\frac{\norm{\braket{\vx,D\vy}}_{p}}{\norm{\vx}\norm{\vy}} &\le_{\epsilon,p} N^\epsilon\sqrt{\frac{\norm{\Im G}_{q}}{N\Im z}} \Big(1+\norm{G}_{q}\Big)^{C} \bigg(1+ \frac{\norm{G}_{q}}{N^{\mu}}\bigg)^{Cp} 
\end{aligned}\\
\begin{aligned}
\frac{\norm{\braket{BD}}_{p}}{\norm{B}} &\le_{\epsilon,p} N^{\epsilon}\frac{\norm{\Im G}_{q}}{N\Im z} \Big(1+\norm{G}_{q}\Big)^{C} \bigg(1+ \frac{\norm{G}_{q}}{N^{\mu}}\bigg)^{Cp},
\label{av bound D eq}
\end{aligned}
\end{gather}
where $q\defeq Cp^4/\epsilon$. Here $\mu>0$ depends on $s$ in Assumption \hyperlink{assumpCD}{(CD)}. In particular, if $\abs{G-M}\prec \Lambda\lesssim 1$, then
\begin{equation}\label{D bound eq}
\abs{D}\prec\sqrt{\frac{\varrho+\Lambda}{N\eta}},\qquad
\abs{D}_\av \prec \frac{\varrho+\Lambda}{N\eta}.
\end{equation}
\end{subequations}
\end{proposition}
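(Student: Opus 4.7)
The bound is essentially a quantitative large-deviation estimate following the cumulant-expansion strategy of \cite{2017arXiv170510661E}. The plan is to estimate the $p$-th moments $\E\abs{\braket{\vx,D\vy}}^{p}$ and $\E\abs{\braket{BD}}^{p}$ directly, opening one factor of $D=WG+\SS[G]G$ and applying the multivariate cumulant expansion
\begin{equation*}
\E[w_\alpha f(W)]=\sum_{k\ge 0}\frac{1}{k!}\sum_{\beta_1,\dots,\beta_k}\kappa(\alpha,\beta_1,\dots,\beta_k)\,\E\bigl[\partial_{\beta_1}\cdots\partial_{\beta_k}f(W)\bigr]
\end{equation*}
iteratively to every resulting $W$-factor. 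The explicit $\SS[G]G$ term in the definition of $D$ is arranged precisely to cancel the second-cumulant ($k=1$, Wick) contribution arising when $\E[WG\cdot X]$ is opened this way, so the first nontrivial terms come from genuine higher-order cumulants and from the ``repeated-entry'' derivatives $\partial_\beta G=-G\Delta^\beta G$ produced by the expansion. These derivatives are also what ultimately deliver the $1/(N\Im z)$ factor via the Ward identity $G^\ast G=\Im G/\eta$.

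All combinatorial bookkeeping is handled by the $\norm{\cdot}_\ast$-norm. The index sets $I_k$ introduced in Section~\ref{sec stability} are essentially closed under one differentiation step: if $\vu\in I_k$, then $M\vu$ and both cumulant-weighted slices $\kappa_c((M\vu)a,b\cdot),\,\kappa_d((M\vu)a,\cdot b)$ lie in $I_{k+1}$. Hence after at most $K$ iterations every intermediate vector arising in the expansion belongs to some $I_k$ and is controlled by $\norm{\cdot}_\ast$. The decay hypothesis~\hyperlink{assumpCD}{(CD)} then localises the cumulant-weighted sums: \eqref{cor decay} controls the $k=2$ contributions, while the spanning-tree factorisation \eqref{tree decay} reduces higher-order cumulants to products of pair correlations whose summation, combined with $s>12$, extracts an $N^{-\mu}$ gain for some $\mu=\mu(s)>0$. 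Finally, Assumption~\ref{assumption high moments} bounds the combinatorial coefficients and the Taylor remainder used to truncate the cumulant hierarchy at order $2p$. This is precisely the scheme carried out in \cite[Thm.~4.1]{2017arXiv170510661E}, which produces \eqref{bootstrapping step} and \eqref{av bound D eq} in the same form.

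The main -- and essentially only -- point needing attention when transferring the argument to the edge is that every step in \cite{2017arXiv170510661E} which silently used $\norm{M}\lesssim 1$ in the bulk is now supplied by Assumption~\ref{assumption bounded M}; no other ingredient of the proof depends on the location of $z$ in the spectrum. One must also check that the enlargement of $I_0$ by $\set{P_{a\cdot}^\ast}$ (cf.\ Remark~\ref{ast norm apology}) preserves the polynomial cardinality $\abs{I_k}\lesssim N^{C_k}$ and the closure properties above, which follows immediately from $\norm{P}\lesssim 1$ as guaranteed by Proposition~\ref{prop stability reference}\eqref{1-CMS bound with flatness}. To deduce \eqref{D bound eq} from \eqref{bootstrapping step}--\eqref{av bound D eq}, substitute the hypothesis $\abs{G-M}\prec\Lambda\lesssim 1$: for deterministic $\vx,\vy$ this yields $\norm{G}_q\le\norm{M}+N^{\epsilon'}\Lambda\lesssim 1$, and writing $\Im G=\Im M+\Im(G-M)$ together with $\norm{\Im M}\lesssim\varrho+\eta$ (which follows from flatness and the MDE) gives $\norm{\Im G}_q\lesssim\varrho+\Lambda$. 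Taking $p$ arbitrarily large and $\epsilon$ arbitrarily small renders the prefactor $(1+\norm{G}_q/N^\mu)^{Cp}$ harmless, and Markov's inequality converts the moment bounds into the stochastic-domination estimates \eqref{D bound eq}.
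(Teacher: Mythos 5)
Your proposal is correct and follows essentially the same route as the paper, whose proof is a one-line citation of \cite[Theorem~3.1]{2017arXiv170510661E}, \cite[Lemma~4.4]{2017arXiv170510661E}, and the bound $\norm{M}\le M_\ast$. You unpack the cumulant-expansion mechanism behind those citations and correctly isolate the only two transfer points the paper flags, namely Assumption~\ref{assumption bounded M} replacing the bulk estimate $\norm{M}\lesssim 1$, and the harmlessness of the enlarged sets $I_k$ as noted in Remark~\ref{ast norm apology}.
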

\begin{proof}
This follows from combining \cite[Theorem 3.1]{2017arXiv170510661E}, the following lemma\footnote{Cf.~Remark \ref{ast norm apology}, where we argue that the proof of \cite{2017arXiv170510661E} about $\norm{\cdot}_\ast$ hold true verbatim in the present case despite the slightly larger sets $I_k$.} from \cite[Lemma 4.4]{2017arXiv170510661E} and $\norm{M}\le M_\ast$.
\end{proof}
\begin{lemma}\label{prec p conversion}
Let $R$ be a random matrix and $\Phi$ a deterministic control parameter. Then the following implications hold:
\begin{enumerate}[(i)]
\item If $\Phi \ge N^{-C}$, $\norm{R} \le N^C$ and $\abs[0]{R_{\vx\vy}}\prec \Phi\norm{\vx}\norm{\vy}$ for all $\vx,\vy$ and some $C$, then $\norm{R}_p\le_{p,\epsilon} N^\epsilon \Phi$ for all $\epsilon>0,p\ge1$. \label{prec to p norm}
\item Conversely, if $\norm{R}_p \le_{p,\epsilon} N^\epsilon \Phi $ for all $\epsilon>0,p\ge 1$, then $\norm{R}_\ast^{K,\vx,\vy} \prec \Phi$ for any fixed $K\in\N$, $\vx,\vy\in\C^N$. \label{p to star norm}
\end{enumerate}
\end{lemma}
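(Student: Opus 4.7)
Both parts of Lemma~\ref{prec p conversion} are bookkeeping arguments converting between tail bounds and moment bounds; neither relies on the probabilistic structure of the model. The only real tools are Markov's inequality and a union bound over the polynomially many vectors in the sets $I_k$.

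For part~\eqref{prec to p norm}, I would split the $p$-th moment of $\braket{\vx,R\vy}$ according to whether the event $\Omega\defeq\{\abs{\braket{\vx,R\vy}}\le N^\epsilon \Phi \norm{\vx}\norm{\vy}\}$ holds. The hypothesis $\abs{R_{\vx\vy}}\prec \Phi\norm{\vx}\norm{\vy}$ gives $\P(\Omega^c)\le N^{-D}$ for arbitrarily large $D$, while on $\Omega^c$ the deterministic estimate $\abs{\braket{\vx,R\vy}}\le \norm{R}\norm{\vx}\norm{\vy}\le N^C\norm{\vx}\norm{\vy}$ holds. Therefore
\[ \E\abs{\braket{\vx,R\vy}}^p \le \bigl[(N^\epsilon \Phi)^p + N^{Cp-D}\bigr]\norm{\vx}^p\norm{\vy}^p. \]
Using $\Phi\ge N^{-C}$, the choice $D\defeq 2Cp$ makes the second summand dominated by the first; taking the $p$-th root and the supremum over $\vx,\vy$ yields $\norm{R}_p\le_{p,\epsilon} N^\epsilon \Phi$.

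For part~\eqref{p to star norm}, Markov's inequality applied to the hypothesis gives, for any deterministic $\vu,\vv$ and any $\epsilon'>0$,
\[ \P\!\biggl(\frac{\abs{R_{\vu\vv}}}{\norm{\vu}\norm{\vv}}>N^\epsilon \Phi\biggr)\le \frac{\norm{R}_p^p}{(N^\epsilon\Phi\norm{\vu}\norm{\vv})^p}\le N^{(\epsilon'-\epsilon)p}. \]
Choosing $\epsilon'<\epsilon$ and then $p$ large makes the right-hand side smaller than any prescribed $N^{-D}$. Since $\abs{I_k}\lesssim N^{C_k}$ (cf.~Remark~\ref{ast norm apology}), a union bound over the $\abs{I_k}^2$ pairs controls each $\norm{R}_{I_k}\prec \Phi$. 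For the final term $N^{-1/2}\max_{\vu\in I_K}\norm{R_{\cdot\vu}}/\norm{\vu}$, I would use the crude $\ell^\infty$--$\ell^2$ estimate $\norm{R_{\cdot\vu}}^2 = \sum_a \abs{R_{e_a,\vu}}^2 \le N \max_a \abs{R_{e_a,\vu}}^2$ together with a union bound over the $N\cdot \abs{I_K}$ pairs $(e_a,\vu)$; the prefactor $N^{-1/2}$ in the definition of $\norm{\cdot}_\ast$ precisely cancels the $\sqrt{N}$ loss, which is indeed the reason the norm is designed with this weighting. Summing the $K$ weighted contributions preserves $\prec \Phi$ since $K$ is fixed.

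The argument presents no genuine obstacle; the only subtlety is quantifier management in part~(ii), where the hypothesis must be invoked with $p$ depending both on the desired decay exponent $D$ and on the chosen gap $\epsilon-\epsilon'$. All losses from union bounds remain under control because the cardinalities $\abs{I_k}$ grow only polynomially in $N$, so they are absorbed by the arbitrary polynomial decay available from Markov's inequality.
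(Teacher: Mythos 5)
Your proof is correct; note, however, that the paper does not actually prove this lemma --- it is imported verbatim from \cite[Lemma 4.4]{2017arXiv170510661E}, and your argument (Markov plus a union bound over the polynomially many vectors in $I_k$, with the deterministic conditioning on $\{\norm{R}\le N^C\}$ for part~(i) and the $N^{-1/2}$ weighting of the $\norm{\genarg}_\ast$-norm absorbing the $\ell^\infty\to\ell^2$ loss for part~(ii)) is precisely the standard one behind such a moment--tail conversion and matches the cited source in structure.
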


\subsection{Bootstrapping}
We now fix $\gamma>0$ and start with the proof of Theorem \ref{theorem local law}. Phrased in terms of the $\norm{\cdot}_\ast$-norm we will prove
\begin{equation}\label{local law}
\abs{G-M} \prec N^{2/K}\left(\sqrt{\frac{\varrho}{N\eta}} + \frac{1}{N\eta}\right), \quad \abs{G-M}_\av \prec N^{2/K} \begin{cases}\frac{1}{N\eta} &\text{$\Re z\in\supp\varrho$}\\ \frac{1}{N(\kappa+\eta)} + \frac{N^{2/K}}{(N\eta)^2\sqrt{\kappa+\eta}} &\text{$\Re z\not\in\supp\varrho$}\end{cases}\quad\text{in}\quad\DD,
\end{equation}
for $\DD=\DD_\gamma^\delta$ and $K\gg 1/\gamma$, i.e.~for $K\gamma$ sufficiently large.  In order to prove \eqref{local law} we use the following iteration procedure. 
\begin{proposition}\label{prop bootstrapping step}
There exists a constant $\gamma_s>0$ depending only on $K$ and $\gamma$ such that \eqref{local law} for $\DD=\DD_{\gamma_0}^\delta$ with $\gamma_0>\gamma$ implies \eqref{local law} also for $\DD=\DD_{\gamma_1}^\delta$ with $\gamma_1\defeq \max\{\gamma,\gamma_0-\gamma_s\}$.
\end{proposition}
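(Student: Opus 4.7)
The plan is to run a continuity argument that lowers $\eta$ from $\eta_0 = N^{-1+\gamma_0}$ down to $\eta_1 = N^{-1+\gamma_1}$ along a fine discrete grid, using Proposition~\ref{prop stability} together with Proposition~\ref{prop D bound} at each step to self-improve the a priori bound. The step size $\gamma_s$ has to be a fixed small constant depending on $K$ and $\gamma$, since the self-improvement is only polynomial in $N$; iterating the step finitely many times brings the bound down to its optimal form on the reduced domain.

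I would first discretize $\eta$ on a grid of spacing $N^{-100}$ between $\eta_1$ and $\eta_0$, and likewise discretize the test vectors and matrices over a polynomial-size net. The trivial Lipschitz bound $\norm{G(z)-G(z')} \lesssim |z-z'|/(\Im z)^2$ and its analogue for $M$ transfer \eqref{local law} from the grid to nearby points at negligible cost, so a single union bound reduces the task to the deterministic grid. At a single grid point, continuity of the inductive hypothesis yields the a priori bound $\abs{G-M}\prec\Lambda_0$ with $\Lambda_0 \ll N^{-3/K}$, so Proposition~\ref{prop stability} is applicable. Proposition~\ref{prop D bound} combined with $\Im\braket{G}=\pi\varrho+\landauO{\Lambda_0}$ then produces
\[
\abs{D} \prec \sqrt{\frac{\varrho+\Lambda_0}{N\eta}}, \qquad \abs{\braket{R_j D}} \prec \frac{\varrho+\Lambda_0}{N\eta} \quad (j=1,2).
\]

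Inserting these into the quadratic equation \eqref{quadratic eq} with $\abs{\xi_1}\sim\sqrt{\kappa+\eta}$ and $\abs{\xi_2}\sim 1$ exhibits two well-separated roots: a \emph{small} root of size $N^{2/K}(\varrho+\Lambda_0)/(N\eta\sqrt{\kappa+\eta})$ and a \emph{large} root of size $\sqrt{\kappa+\eta}$. The correct (small) branch is pinned down by continuity: at the starting scale $\eta_0$ the inductive hypothesis forces $\Theta$ onto the small branch, and as $\eta$ decreases along the grid $\Theta$ changes by $\ll\sqrt{\kappa+\eta}$ between neighbouring points, hence cannot jump across the gap. Plugging the small-root bound on $\Theta$ and the estimate on $D$ into the decomposition \eqref{G-M eq} and using $\norm{B}+\normsp{\BO^{-1}\cQ}\lesssim 1$ gives the isotropic bound of \eqref{local law}; inside $\supp\varrho$ one invokes $\varrho\sim\sqrt{\kappa+\eta}$ to close the self-improving inequality after finitely many iterations. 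The averaged bound is obtained analogously by testing \eqref{G-M eq} against a deterministic matrix and using the averaged $D$-estimate; outside $\supp\varrho$, substituting $\varrho\sim\eta/\sqrt{\kappa+\eta}$ from Proposition~\ref{prop stability reference}\eqref{prop sqrt edge} together with the extra $1/\sqrt{\kappa+\eta}$ factor from the quadratic produces the sharper $1/(N(\kappa+\eta))$ term.

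The main obstacle is the branch-selection step for the quadratic equation, which is the genuinely new difficulty compared to the bulk argument of \cite{2017arXiv170510661E}: there the stability operator is invertible in every direction and only a linear equation appears. At the edge, the near-degeneracy of $\BO$ in the direction of $B$ forces a quadratic equation whose two branches differ only on the vanishing scale $\sqrt{\kappa+\eta}$. The continuity argument must therefore combine the inductive hypothesis (to pin down the correct branch at large $\eta$) with the quantitative separation of the roots to transport that choice of branch down to smaller $\eta$. This is precisely why $\gamma_s$ can only be an $\landauO{1}$ step and the full domain $\DD_\gamma^\delta$ is reached only after finitely many iterations.
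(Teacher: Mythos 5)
Your proposal is essentially the paper's argument. The structure — bootstrap in $\Im z$, use the a priori bound to make Proposition~\ref{prop stability} applicable, feed the $D$-estimates from Proposition~\ref{prop D bound} into the quadratic equation~\eqref{quadratic eq}, pin down the small branch by continuity, and iterate the self-improving inequality — is exactly what the paper does, and the split into an inside-spectrum case ($\varrho\sim\sqrt{\kappa+\eta}$) and an outside-spectrum case ($\varrho\sim\eta/\sqrt{\kappa+\eta}$) matches as well.

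One point where the paper invests more care than your sketch. You write that \textquotedblleft continuity of the inductive hypothesis yields the a priori bound $\abs{G-M}\prec\Lambda_0$ with $\Lambda_0\ll N^{-3/K}$\textquotedblright\ at each grid point, but at scales $\eta\in[\eta_1,\eta_0)$ this is not read off directly from the inductive hypothesis on $\DD_{\gamma_0}^\delta$; it is itself the conclusion you are trying to propagate. The paper sidesteps the ensuing step-by-step chain by using the deterministic monotonicity of $\eta\mapsto\eta\norm{G(\tau+\ii\eta)}_p$, which converts the bound $\norm{G}_p\lesssim 1$ at $\eta_0$ into $\norm{G}_p\lesssim N^{2\gamma_s}$ on all of $\DD_{\gamma_1}^\delta$ in one stroke; this feeds into Proposition~\ref{prop D bound} and yields an a priori control on $\abs{\xi_1\Theta+\xi_2\Theta^2}$ uniformly on $\DD_{\gamma_1}^\delta$, after which a single application of Lemma~\ref{lemma qu stab} (the abstract version of your branch-selection argument) and the gap between $N^{-5/K}$ and $N^{-3/K}$ in the stability input give the rough bound. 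Your grid chain can in principle replace this, since the Lipschitz continuity of $G$ and $M$ is deterministic, but as written the argument hides the work of actually closing the chain. Finally, the net over $\vx,\vy,B$ is superfluous: the stochastic domination framework of Definition~\ref{def:stochDom} and the convention \eqref{group prec} already carry uniformity in $\vx,\vy,B$, so only the $z$-grid plus Lipschitz continuity is needed.
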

\begin{proof}[Proof of \eqref{local law} for $\DD=\DD_\gamma^\delta$, assuming Proposition \ref{prop bootstrapping step}]
For $\DD=\DD_\gamma^\delta$ with $\gamma\ge1$ we have \eqref{local law} by \cite[Theorem 2.1]{2017arXiv170510661E}. For $\gamma<1$ we iteratively apply Proposition \ref{prop bootstrapping step} starting from\footnote{Strictly speaking, in the very first step we start from $\DD^\delta\cap\{\Im z\ge\delta/2\}$ instead of $\DD^\delta_1$ since, depending on the value of $\delta$, the latter might be empty.} $\DD_1^\delta$ finitely many times until we have shown \eqref{local law} for $\DD=\DD_\gamma^\delta$.
\end{proof}
\begin{proof}[Proof of Proposition \ref{prop bootstrapping step}]
We now suppose that \eqref{local law} has been proven for some $\DD=\DD_{\gamma_0}^\delta$ and aim at proving \eqref{local law} for $\DD=\DD_{\gamma_1}^\delta$ for some $\gamma_1=\gamma_0-\gamma_s$, $0<\gamma_s\ll \gamma$. The proof has two stages. Firstly, we will establish the rough bounds
\begin{equation}\label{weak local law}
\abs{\Theta}\prec N^{-5/K}\quad\text{and}\quad\abs{G-M}\prec N^{-5/K}\inD{\gamma_1},
\end{equation}
and then in the second stage improve upon this bound iteratively until we reach \eqref{local law} for $\DD=\DD_{\gamma_1}^\delta$. 

\subsubsection*{Rough bound.}
By \eqref{local law}, Lemma \ref{prec p conversion} and monotonicity of the map $\eta\mapsto\eta\norm{G(\tau+i\eta)}_p$ (see e.g.~(77) in \cite{2017arXiv170510661E}) we find $\norm{G}_p \le_{\epsilon,p} N^{\epsilon+\gamma_s}\le N^{2\gamma_s}$ in $\DD_{\gamma_1}^\delta$. As long as $2\gamma_s<\mu$ we thus have
\begin{equation*}
\norm{D}_p \le_{\epsilon,p} \frac{N^{\epsilon+2C\gamma_s+\gamma_s} }{\sqrt{N\eta}}\le \frac{N^{\gamma_s(2+2C)}}{\sqrt{N\eta}}, \qquad \norm{\braket{BD}}_p \le_{\epsilon,p} \norm{B}\frac{N^{\epsilon+2\gamma_s+2\gamma_s C }}{N\eta} \le \norm{B}\frac{N^{\gamma_s(3+2C)}}{N\eta}.
\end{equation*}
We now fix $\vx,\vy$ and it follows from \eqref{quadratic eq} that 
\begin{equation*}
\abs{\xi_1\Theta + \xi_2\Theta^2 } \prec \frac{N^{2\gamma_s (3+2C)+2/K}}{N\eta} \inD{\gamma_1}
\end{equation*}
and consequently by Lipschitz continuity of the lhs.~with a Lipschitz constant of $\eta^{-2}\le N^2$, and choosing $K,\gamma_s$ large and respectively small enough depending on $\gamma$ we find that with high probability $\abs{\xi_1\Theta+\xi_2\Theta^2}\le N^{-10/K}$ in all of $\DD_{\gamma_1}^\delta$. The following lemma translates the bound on $\abs{\xi_1\Theta+\xi_2\Theta^2}$ into a bound on $\abs{\Theta}$.
\begin{lemma}\label{lemma qu stab}
Let $d=d(\eta)$ be a monotonically decreasing function in $\eta\ge 1/N$ and assume $0\le d\lesssim N^{-\epsilon}$ for some $\epsilon>0$. Suppose that 
\begin{equation*}
\abs{\xi_1\Theta+\xi_2\Theta^2} \lesssim d\quad\text{for all $z \in\DD^\delta$}, \qquad \text{and}\qquad \abs{\Theta} \lesssim \min\left\{ \frac{d}{\sqrt{\kappa+\eta}}, \, \sqrt{d} \right\}\quad\text{for some $z_0$},
\end{equation*}
then also $\abs{\Theta}\lesssim\min\{d/\sqrt{\kappa+\eta},\sqrt d\}$ for all $z'\in\DD^\delta$ with $\Re z'=\Re z_0$ and $\Im z'<\Im z_0$.
\end{lemma}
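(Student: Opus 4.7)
The strategy I would follow is a continuity (bootstrap) argument in the imaginary part $\eta$, built on an explicit description of the sub-level sets of the quadratic $\xi_2 x^2 + \xi_1 x$. Factoring $\xi_1 x + \xi_2 x^2 = \xi_2 x(x - r_\ast)$ with $r_\ast \defeq -\xi_1/\xi_2$, Proposition~\ref{prop stability reference}\eqref{1-CMS bound with flatness} yields $\abs{r_\ast}\sim\sqrt{\kappa+\eta}$ and $\abs{\xi_2}\sim 1$. A short quadratic-formula computation then shows that the sub-level set
\[
\SS_d \defeq \Set{x \in \C | \abs{\xi_2 x(x - r_\ast)} \lesssim d}
\]
is contained in the union of two disks of radius $\lesssim d/\sqrt{\kappa+\eta}$, one centred at $0$ (the small branch) and one centred at $r_\ast$ (the large branch). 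Provided $d \le c_0(\kappa+\eta)$ for a sufficiently small absolute constant $c_0$, these two disks are disjoint and separated by an annular forbidden region whose inner and outer radii are comparable to $\sqrt{\kappa+\eta}$.

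The easy regime $d \gtrsim \kappa+\eta$ can be settled pointwise in $z'$: applying the reverse triangle inequality to $\abs{\xi_2 x^2+\xi_1 x}\lesssim d$ gives $\abs{x}\lesssim \sqrt{\kappa+\eta}+\sqrt{d}\lesssim \sqrt{d}$, which equals $\min\{d/\sqrt{\kappa+\eta},\sqrt{d}\}$ in this regime. Hence no propagation in $\eta$ is needed, and the substance of the argument lies in the complementary regime $d \le c_0(\kappa+\eta)$, where one must show that $\Theta$ stays in the small branch of $\SS_d$ and cannot jump to the large branch near $r_\ast$.

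For this step I would invoke continuity of $\eta\mapsto \Theta(\Re z_0+\i\eta)$, which holds because $M$ is holomorphic on $\HC$, $G$ is meromorphic there, and the spectral projection $\cP$ depends analytically on $z$ (since $\beta$ is an isolated simple eigenvalue of $\BO$ with a uniform gap, by Proposition~\ref{prop stability reference}\eqref{1-CMS bound with flatness}). Define
\[
\eta_\ast \defeq \inf\Set{ \eta \in [N^{-1}, \Im z_0] | \abs{\Theta(\Re z_0 + \i \eta')} \le C\min\bigl(d(\eta')/\sqrt{\kappa+\eta'},\sqrt{d(\eta')}\bigr) \text{ for all } \eta' \in [\eta,\Im z_0]},
\]
which is well defined by the hypothesis at $z_0$. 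If $\eta_\ast = N^{-1}$ we are done. Otherwise continuity implies that the bound is saturated at $\eta_\ast$, so $\Theta(\Re z_0 + \i \eta_\ast)$ lies strictly inside the small branch of $\SS_{d(\eta_\ast)}$. Continuity then keeps $\Theta(\Re z_0 + \i\eta)$ in the small branch for $\eta$ in a small neighbourhood below $\eta_\ast$, since crossing into the large branch would require traversing the annular gap of width $\gtrsim \sqrt{\kappa+\eta_\ast}$; this contradicts the minimality of $\eta_\ast$.

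The main technical point I anticipate is matching constants at the transition $d \sim \kappa+\eta$. Since $d$ is monotone non-decreasing as $\eta$ decreases, the bootstrap may cross from the hard regime into the easy one during the descent in $\eta$, so one has to check that the pointwise $\sqrt{d}$-bound of the easy regime seamlessly feeds into the continuity argument of the hard regime as an initial condition at the threshold. Separately one must choose the constant $C$ in the definition of $\eta_\ast$ small enough that the small-branch disk lies strictly inside the inner boundary of the forbidden annulus; this is guaranteed by the explicit sizes $\abs{\xi_1}\sim\sqrt{\kappa+\eta}$ and $\abs{\xi_2}\sim 1$ furnished by Proposition~\ref{prop stability reference}\eqref{1-CMS bound with flatness}.
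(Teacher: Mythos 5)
Your argument is correct and, up to level of detail, takes the same route as the paper's: the paper also splits into the regime $d \ll \kappa+\eta$ (handled by a continuity argument that keeps $\Theta$ in the small branch of the quadratic) and the regime $d \gtrsim \kappa+\eta$ (handled pointwise via $\abs{\Theta}\lesssim\sqrt{\kappa+\eta}+\sqrt d\lesssim\sqrt d$), and it invokes precisely the monotonicity of $(\kappa+\eta)/d$ in $\eta$ to guarantee the single crossover you flag as a technical point. The paper's proof is much terser, citing prior work for the continuity step, while your explicit factorization into the two disks and the $\eta_\ast$ bootstrap spell out exactly what that reference supplies.
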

\begin{proof} This proof is basically identical to the analysis of the solutions to the same approximate quadratic equation, as appeared in various previous works, see e.g.~\cite[Section 9]{MR3699468}. In the spectral bulk this is trivial since then $\abs{\xi_1}\sim \sqrt{\kappa+\eta}\sim1$. Near a spectral edge we observe that $(\kappa+\eta)/d$ is monotonically increasing in $\eta$. First suppose that $(\kappa+\eta)/d\gg 1$ from which it follows that $\abs{\Theta}\lesssim d/\sqrt{\kappa+\eta}\lesssim \sqrt{d}$ in the relevant branch determined by the given estimate on $\Theta$ at $z_0$. Now suppose that below some $\eta$-threshold we have $(\kappa+\eta)/d\lesssim 1$. Then we find $\abs{\Theta}\lesssim \sqrt{\kappa+\eta}+\sqrt{d}\lesssim \sqrt{d}\lesssim d/\sqrt{\kappa+\eta}$ and the claim follows also in this regime.
\end{proof}
Since \eqref{weak local law} holds in $\DD_{\gamma_0}^\delta$ and $1/N\eta\le N^{-100/K}$, we know $\abs{\Theta}\le \min\{ N^{-10/K}/\sqrt{\kappa+\eta}, N^{-5/K} \}$ and therefore can conclude the rough bound $\abs{\Theta}\prec N^{-5/K}$ in all of $\DD_{\gamma_1}^\delta$ by Lemma \ref{lemma qu stab} with $d=N^{-10/K}$. Consequently we have also that 
\begin{equation*}\norm{G-M}_\ast \bm 1(\norm{G-M}_\ast\le N^{-3/K}) \prec N^{-5/K}\inD{\gamma_1}.\end{equation*}
Due to this gap in the possible values for $\norm{G-M}_\ast$ it follows from a standard continuity argument that $\norm{G-M}_\ast \prec N^{-5/K}$ and therefore since $\vx,\vy$ were arbitrary, $\abs{\Theta}\prec N^{-5/K}$ and $\abs{G-M}\prec N^{-5/K}$ in all of $\DD_{\gamma_1}^\delta$. 

\subsubsection*{Strong bound.}
All of the following bounds hold uniformly in the domain $\DD_{\gamma_1}^\delta$ which is why we suppress this qualifier. By combining Propositions \ref{prop stability} and \ref{prop D bound} we find for deterministic $0\le\theta\le\Lambda\le N^{-3/K}$ under the assumptions $\abs{\Theta}\prec\theta$, $\abs{G-M}\prec\Lambda$,  that
\begin{equation}\label{self-improving bound}
\abs{G-M} \prec \theta + N^{2/K} \sqrt{\frac{\varrho+\Lambda}{N\eta}},\qquad \abs{\xi_1\Theta+\xi_2\Theta^2}\prec  N^{2/K}\frac{\varrho+\Lambda}{N\eta}.
\end{equation}
The bound on $\abs{G-M}$ in \eqref{self-improving bound} is a self-improving bound and we find after iteration that
\begin{equation*}
\abs{G-M} \prec \theta+N^{2/K}\left(\frac{1}{N\eta}+\sqrt{\frac{\varrho+\theta}{N\eta}}\right),\quad\text{hence}\quad \abs{\xi_1\Theta+\xi_2\Theta^2}\prec  N^{2/K}\frac{\varrho+\theta}{N\eta}+N^{4/K}\frac{1}{(N\eta)^2}.
\end{equation*}
We now distinguish whether $\Re z$ is inside or outside the spectrum. Inside we have $\varrho\sim \sqrt{\kappa+\eta}$, so we fix $\theta$ and use Lemma \ref{lemma qu stab} with $d = N^{2/K}(\sqrt{\kappa+\eta} +\theta) /(N\eta)+N^{4/K}/(N\eta)^2$ to conclude $\abs{\Theta} \prec \min\{d/\sqrt{\kappa+\eta}, \sqrt{d}\}$ from the input assumption $\abs{\Theta}\prec N^{2/K}/N\eta$ in $\DD_{\gamma_0}$. 
Iterating this bound, we obtain
\begin{equation*}
\abs{\Theta}\prec  N^{2/K}\frac{1}{N\eta},\quad\text{hence}\quad \abs{G-M} \prec N^{2/K}\left(\sqrt{\frac{\varrho}{N\eta}}+\frac{1}{N\eta}\right).
\end{equation*}
By an analogous argument, outside of the spectrum we have an improved bound on $\Theta$
\begin{equation*}
\abs{\Theta} \prec N^{2/K} \frac{1}{N(\kappa+\eta)} + N^{4/K} \frac{1}{(N\eta)^2\sqrt{\kappa+\eta}},
\end{equation*}
because $\varrho\sim\eta/\sqrt{\kappa+\eta}$. Finally, for the claimed bound on $\abs{G-M}_\av$ we use \eqref{G-M eq} in order to obtain a bound on $\abs{G-M}_\av$ in terms of a bound on $\Theta$. 
\end{proof}
Due to \eqref{local law}, we now have all the ingredients to prove the local law, as well as delocalisation of eigenvectors, and the absence of eigenvalues away from the support of $\varrho$.
\begin{proof}[Proof of Theorem \ref{theorem local law}, Corollary \ref{cor no eigenvalues outside} and Corollary \ref{cor delocalisation}]
The local law inside the spectrum \eqref{iso bulk}--\eqref{av bulk} follows immediately from \eqref{local law}. Now we prove Corollary \ref{cor no eigenvalues outside}. If there exists an eigenvalue $\lambda$ with $\dist(\lambda,\supp\varrho)>N^{-2/3+\omega}$, then at, say, $z=\lambda+\ii N^{-4/5}$ we have $\abs{\braket{G-M}}\ge c N^{-1/5}$. On the other hand we know from the improved local law \eqref{local law} that with high probability $\abs{\braket{G-M}}\le N^{-1/4}$ and we obtain the claim.

We now turn to the proof of Corollary \ref{cor delocalisation}. For the eigenvectors $\vu_k$ and eigenvalues $\lambda_k$ of $H$ we find from the spectral decomposition and the local law with high probability
\[  1\gtrsim \Im \langle \vx , G \vx \rangle= \eta\sum_k \frac{\abs{\langle \vx, \vu_k\rangle }^2}{(\tau-\lambda_k)^2+\eta^2}\ge \frac{\abs{\langle \vx, \vu_k\rangle}^2}{\eta}\quad\text{for}\quad z=\tau+\ii\eta\]
 for any normalised $\vx \in \C^N$, 
where the last inequality followed assuming that $\tau$ is chosen $\eta$-close to $\lambda_k$. With the choice $\eta = N^{-1+\gamma}$ for arbitrarily small $\gamma>0$ the claim follows. Note that for this proof only \eqref{iso bulk} of Theorem~\ref{theorem local law} was used.

Finally, we establish \eqref{av outisde} and consider $z\in\DD^\delta$ with $\dist(\Re z,\supp\varrho)\ge N^{-2/3+\omega}$ and $\vx,\vy,B$ fixed. 
As in the proof of \cite[Corollary 1.11]{MR3719056}, the optimal local law \eqref{local law} implies rigidity 
up to the edge as formulated in Corollary~\ref{cor rigidity}. The only difference is
that this standard argument proves \eqref{eq rigidity} only if the supremum is restricted to $\tau\in\supp\varrho$ with $\dist(\tau,\partial\supp\varrho) \ge N^{-2/3+\epsilon}$.
The cause for this restriction is a possible mismatch of the labelling of the edge eigenvalues, in other words the precise
location of  $N^\epsilon$ eigenvalues near an internal gap is not established yet; they may belong to either band 
adjacent to this gap. This shortcoming will be remedied  by the band rigidity in the proof of Corollary \ref{cor rigidity} in Section~\ref{sec:band_rigidity} below. 
However, for the current argument, the imprecise
location of $N^\epsilon$ eigenvalues does not matter. In fact, already from this version of rigidity, together with 
 the delocalisation of eigenvectors (Corollary \ref{cor delocalisation}) and the absence of eigenvalues
outside of the spectrum by Corollary \ref{cor no eigenvalues outside} we have, at $z=\tau+\ii\eta$ (recall that we consider $z\in\DD^\delta$ with $\dist(\Re z,\supp\varrho)\ge N^{-2/3+\omega}$),
\[
    \Im \braket{ \vx , G(z) \vx}= \eta\sum_k \frac{\abs{\langle \vx, \vu_k\rangle }^2}{(\tau-\lambda_k)^2+\eta^2} \prec \frac{1}{N}\sum_k \frac{\eta}{(\tau-\lambda_k)^2+\eta^2}
    \prec  \int_\R \frac{\eta\,\varrho(x) \diff x}{\abs{\tau- x}^2+\eta^2}
\]
 for any normalised vector $\vx$. From the square
root behaviour of $\varrho$ at the edge and $\kappa(z) \ge N^{-2/3+\omega}$ we can easily infer  $\norm{\Im G}_\ast \prec \eta/\sqrt{\kappa+\eta}$.  Therefore it follows from Proposition \ref{prop D bound} that $\norm{D}_\ast^2+\abs{\braket{RD}}\prec 1/(N\sqrt{\kappa+\eta})$ and from \eqref{quadratic eq} and Lemma \ref{lemma qu stab} that $\abs{\Theta}\prec N^{2/K-1}/(\kappa+\eta)$. Finally, we thus obtain, \begin{equation*}
\abs{G-M}_\av \prec \frac{N^{2/K}}{N(\kappa+\eta)}+ \frac{N^{2/K}}{N\sqrt{\kappa+\eta}}\lesssim N^{2/K}\frac{1}{N(\kappa+\eta)}
\end{equation*}
from \eqref{G-M eq} and \eqref{av outisde} follows. 
\end{proof}

\section{Analysis of the Matrix Dyson equation} \label{sec:dens_close_to_edge} 

The essential prerequisite for edge universality is the regularity of the edge, i.e.~the local square root behavior of the self consistent density $\dens$ 
as imposed in Definition \ref{def regular edge}. For the proof of universality via \cite{2017arXiv171203881L}, 
however, it is necessary to first establish that the square-root behaviour and the adjacent gap persist in a macroscopic interval. 
This is achieved in the following main theorem whose proof will be given  
 in Section~\ref{subsec:proof_dens_close_to_regular_edge_below} after several preparatory results. 
In particular, as a second main result of this section, in Theorem~\ref{thm:B_inverse_sharp_edge}, 
we will give a sharp estimate on the inverse of the stability operator $\BO = \Id - M \SS[\genarg] M$ 
which also plays a central role in the proof of the local law in Section~\ref{sec proof local law}. 

\begin{theorem}[Behaviour of $\dens$ close to a square root edge] \label{thm:dens_close_to_regular_edge}
Let \ref{assumption A}, \ref{assumption flatness} and \ref{assumption bounded M} be satisfied for some $\tau_0 \in \R$. 
If $\tau_0\in\partial\supp\varrho$ is a regular edge then there are $c \sim 1$ and $\delta_* \sim 1$ such that 
\[ 
\dens(\tau_0 + \omega) = \begin{cases} 
c \abs{\omega}^{1/2} + \ord(\abs{\omega}), \quad & \text{if } \omega \in [-\delta_*,0], \\  
0, \qquad  & \text{if } \omega \in [0,\delta_*].
\end{cases} 
\] 
\end{theorem}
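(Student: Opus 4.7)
The strategy is a local shape analysis of the MDE in a fixed neighborhood of $\tau_0$, in the spirit of~\cite{2015arXiv150605095A, 2016arXiv160408188A, shapepaper}, using the spectral structure of the stability operator $\BO$ supplied by Proposition~\ref{prop stability reference}. Set $\zeta = \omega + \ii\eta$ with $\eta \ge 0$ and write $M(\tau_0 + \zeta) = M_0 + U(\zeta)$, where $M_0 \defeq \lim_{\eta \downarrow 0} M(\tau_0 + \ii\eta)$; this limit exists by Proposition~\ref{prop stability reference}\eqref{prop31 dens}. Subtracting the MDE at $z = \tau_0$ from the MDE at $z = \tau_0 + \zeta$ and multiplying from the left by $M_0$ yields the master equation
\[
\BO[U] = -\zeta M_0 (M_0 + U) + M_0 \SS[U] U,
\]
where $\BO = \Id - M_0 \SS[\genarg] M_0$ is the stability operator evaluated at the edge.

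\textbf{Reduction to a scalar cubic.} At $\zeta = 0$, Proposition~\ref{prop stability reference}\eqref{1-CMS bound with flatness} provides a simple isolated zero eigenvalue of $\BO$ with left/right eigenvectors $P, B$, a uniform spectral gap on $\Ran(\cQ)$, and the non-degeneracy $\abs{\braket{P, M_0 \SS[B] B}} \sim 1$, $\abs{\braket{P, B}} \sim 1$. Decomposing $U = \Theta B + R$ with $R \in \Ran(\cQ)$, the $\cQ$-projection of the master equation can be solved for $R$ as an analytic function of $\Theta, \zeta$ in a ball of size $\sim 1$ via a Banach fixed point:
\[
R(\Theta, \zeta) = -\zeta \BO^{-1}\cQ[M_0^2] + \Theta^2 \BO^{-1}\cQ[M_0 \SS[B] B] + \landauO{\abs{\Theta}\abs{\zeta} + \abs{\Theta}^3 + \abs{\zeta}^2}.
\]
Substituting back and projecting with $\cP$ reduces the master equation to a scalar cubic
\[
\pi_3 \Theta^3 + \pi_2 \Theta^2 + \pi_1(\zeta) \Theta + \pi_0(\zeta) = 0,
\]
with $\pi_0(\zeta) = -\zeta \braket{P, M_0^2} + \landauO{\abs{\zeta}^2}$, $\pi_1(\zeta) = \landauO{\abs{\zeta}}$, $\abs{\pi_2} \sim 1$, and $\abs{\pi_3} \sim 1$; the bound on the cubic coefficient $\pi_3$ requires an additional computation mirroring that of~\cite[Section~7]{shapepaper}.

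\textbf{Solving the cubic and identifying the density.} The regular edge hypothesis~\eqref{sqrt growth} combined with Proposition~\ref{prop stability reference}\eqref{prop sqrt edge} pins down the sign structure of the coefficients: after an appropriate normalisation of $B, P$, the coefficients $\pi_2, \pi_3$ and the leading coefficient $c_0 \defeq -\braket{P, M_0^2}$ of $\pi_0$ are all real, with $\pi_3 / \pi_2 > 0$ and $c_0 / \pi_2$ of the sign dictated by~\eqref{sqrt growth}. The branch of $\Theta$ fixed by continuity from $\eta = +\infty$ (where $\Theta \to 0$) satisfies
\[
\Theta(\omega + \ii 0) = a \sqrt{\abs{\omega}} + \landauO{\abs{\omega}} \qquad (\omega \in [-\delta_*, 0])
\]
for an explicit $a \in \C$ with $\Im a \ne 0$, while for $\omega \in [0, \delta_*]$ the discriminant of the cubic has the opposite sign and the continuous branch is real, i.e.~$\Im \Theta \equiv 0$. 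Since $\dens(\tau_0 + \omega) = \pi^{-1}\Im\braket{U(\omega)} = \pi^{-1} \Im \Theta \cdot \braket{B} + \landauO{\abs{\omega}}$, this yields the claimed asymptotic, and the slope $c$ is identified with $\gamma_\mathrm{edge}^{3/2}/\pi$ by matching against~\eqref{sqrt growth}.

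\textbf{Main obstacle.} The chief difficulty is making the above analysis uniform on a macroscopic interval $\abs{\omega} \le \delta_*$ with $\delta_* \sim 1$, rather than an asymptotically vanishing one. This requires three ingredients: (i)~Lipschitz continuity of $M(\tau_0 + \omega + \ii 0)$, of the eigenvectors $B(\omega), P(\omega)$, and of the cubic coefficients $\pi_j(\omega)$ in $\omega$, which is provided by Assumption~\ref{assumption bounded M} together with the flatness Assumption~\ref{assumption flatness}; (ii)~a uniform spectral gap separating the small eigenvalue branch of the $\omega$-shifted stability operator from the rest of its spectrum for $\omega \in [-\delta_*, \delta_*]$, obtained from Proposition~\ref{prop stability reference}\eqref{1-CMS bound with flatness} by a perturbative argument; and (iii)~a continuity/connectedness argument ruling out a spontaneous degeneracy of $\pi_3$ or a premature exit from the validity regime of the fixed point iteration as $\omega$ varies. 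The macroscopic extent of the gap $[\tau_0, \tau_0 + \delta_*]$ is then automatic: as long as the cubic coefficients remain of order one, the discriminant has a definite sign throughout $[0, \delta_*]$, so $\dens \equiv 0$ there.
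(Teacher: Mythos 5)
Your overall strategy — subtract the MDE at $\tau_0+\zeta$ from the MDE at $\tau_0$, split $\Delta = M(\tau_0+\zeta)-M(\tau_0)$ along the unstable/stable subspaces of $\BO$, solve the $\cQ$-component by a fixed point argument, and reduce the $\cP$-component to a scalar polynomial equation — is exactly the framework the paper employs (Proposition~\ref{pro:quadratic_for_shape_analysis} and Lemma~\ref{lem:derivation_quadratic_equation}). However, there are two genuine gaps.

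\textbf{Circularity in the non-degeneracy input.} You invoke Proposition~\ref{prop stability reference}\eqref{1-CMS bound with flatness} to supply $\abs{\braket{P,M_0\SS[B]B}} \sim 1$, which is exactly the coefficient $\abs{\pi_2}\sim 1$ you need to keep the cubic subleading and to extract the $\sqrt{\abs\omega}$ asymptotics. But in the paper, Proposition~\ref{prop stability reference}\eqref{1-CMS bound with flatness} is \emph{derived from} Theorem~\ref{thm:dens_close_to_regular_edge} in Section~\ref{subsec:proof_of_pro_stab_mde}. The lower bound $\abs{\sigma} = \abs{\scalar{P}{M\SS[B]B}}\gtrsim 1$ is the central technical difficulty of the whole section; the paper establishes it independently in Proposition~\ref{pro:sigma_sim_1} via a differential inequality for the isolated eigenvalue $\beta$ on the real line \emph{inside the gap}, using only \eqref{sqrt growth} and $1/3$-Hölder continuity (Corollary~\ref{cor:hoelder_1_3}). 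Without an independent argument of this kind, your proof assumes what it intends to prove. In particular, note that Definition~\ref{def regular edge} only controls $\varrho$ along a sequence approaching $\tau_0$ from inside $\supp\varrho$; it does not by itself tell you that the quadratic coefficient is non-degenerate.

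\textbf{A priori regularity is underspecified and partly wrong.} In your ``Main obstacle'' you assert Lipschitz continuity of $M(\tau_0+\omega+\ii 0)$, $B(\omega)$, $P(\omega)$ and of the scalar coefficients as a consequence of Assumptions~\ref{assumption bounded M} and~\ref{assumption flatness}. This cannot be right: the density itself behaves like $\sqrt{\abs\omega}$ at $\tau_0$, hence $M$ is at best $1/2$-Hölder across the edge. More to the point, even $1/2$-Hölder continuity is not automatic from the boundedness and flatness assumptions — the paper has to first derive the sharp $\BO^{-1}$ bound (Theorem~\ref{thm:B_inverse_sharp_edge}, which again relies on Proposition~\ref{pro:sigma_sim_1}) before concluding the $1/2$-Hölder estimate in Corollary~\ref{cor:Hoelder_continuity_M}. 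This Hölder continuity is precisely what gives you the a priori bound $\abs{\Theta(\omega)}\lesssim\abs\omega^{1/2}$ on the unstable coordinate, which is then needed to absorb the cubic term ($\pi_3\Theta^3 = \ord(\abs\omega^{3/2})$) and to justify the Banach fixed point for $R$ on a macroscopic $\omega$-interval. Without it, your argument would need $\abs\omega$ to be small against unspecified model parameters rather than yielding $\delta_*\sim 1$.

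Two smaller remarks. The sign in your master equation should be $\BO[U] = +\zeta M_0^2 + \zeta M_0 U + M_0\SS[U]U$ (cf.~\eqref{eq:stab_equation_non_symm}), and the paper symmetrises it as in \eqref{eq:difference_mde} so that the resulting equation preserves self-adjointness and positivity properties. And the cubic equation you set up is not wrong, but it is an unnecessary overhead: once $\abs\Theta\lesssim\abs\omega^{1/2}$ and $\abs{\pi_2}\sim 1$ are in hand, the $\Theta^3$ term is subleading and one can work with the quadratic \eqref{eq:quadratic_equation} directly, with the cubic and higher contributions absorbed into the error $\nu(\omega)$; the branch selection is then done via $\Im\Theta\geq0$ as in Section~\ref{subsec:proof_dens_close_to_regular_edge_below}.
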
 

In this section and, in particular, the previous theorem, the comparison relation $\sim$ is understood with respect to the 
constants in \ref{assumption A}, \ref{assumption flatness} and \ref{assumption bounded M} 
as well as in \eqref{sqrt growth}.  

We now outline the strategy for the proof of Theorem~\ref{thm:dens_close_to_regular_edge}. 
First, we will extend $M$ to the real line by showing that it is 
$1/2$-Hölder continuous in the vicinity of $\tau_0$ (see Corollary~\ref{cor:Hoelder_continuity_M} below). 
The Hölder continuity also yields an a-priori bound on $\Delta \defeq M(\tau_0 + \omega) - M(\tau_0)$,   hence
on  $\varrho(\tau_0+\omega) = \pi^{-1}\avg{\Im M(\tau_0 + \omega)}= \pi^{-1}\avg{\Im \Delta}$ as well, 
 with  small 
 $\omega \in \R$. 
Second, by using this bound, we will verify that $\Delta$ is governed by a scalar quantity analogous to $\Theta$ from \eqref{def:Theta} which satisfies a quadratic 
equation (see Proposition~\ref{pro:quadratic_for_shape_analysis} below).    The fact that $\Im \Delta\ge 0$ will select the correct solution 
to this quadratic equation and Theorem~\ref{thm:dens_close_to_regular_edge} 
will follow from analysing the  stability of this solution. 

 The  equation for $\Delta$ can be obtained from subtracting the MDE at $\tau_0 + \omega$ and $\tau_0$. It reads as
\begin{equation} \label{eq:stab_equation_non_symm}
 \BO[\Delta] = M \SS[\Delta] \Delta + \omega M^2  + \omega M \Delta, \qquad M = M(\tau_0).
\end{equation}
To express $\Delta$ from \eqref{eq:stab_equation_non_symm}
 it is therefore essential to understand 
the  instabilities of  $\BO^{-1}$ very precisely. 
 The main difficulty is that near the edge  $\BO$ has a
 small eigenvalue that is very sensitive to a delicate balance between $\SS$ and $M$. 
 An additional complication is that $\BO$ is non-selfadjoint. Both obstacles are 
overcome by representing 
$\BO$ in the form $\BO = \cV (\mathcal{U}  - \cF)\cV^{-1}$, where $\mathcal{U}$ is unitary,  $\cV$ is bounded invertible,
$\cF$ is self-adjoint and it preserves the cone of positive matrices.  Thus a Perron-Frobenius argument can be applied to $\cF$, i.e.~
 its norm can be obtained  simply by finding its top eigenvector. 
 In this way we can very precisely determine the size of 
 $M\SS[\cdot ]M$ and estimate its top eigenvalue without explicitly solving the MDE. 
 This representation of $\BO$ (cf.~\eqref{eq:representation_BO} below) with the  Perron-Frobenius argument 
  is one  of the main results of \cite{2016arXiv160408188A} and
 the analysis of $\cF$ will partly be imported from \cite{2016arXiv160408188A}. 
We will see that $\BO^{-1}$ has precisely one unstable direction and
 we will obtain the quadratic equation  for $\Theta$, the projection of $\Delta$, onto this  direction.
The sharp estimate on the eigenvalue of the unstable direction will give rise to the following bound 
on $\BO^{-1}$. 

\begin{theorem}[Sharp bound on $\BO^{-1}$ near a regular edge]  \label{thm:B_inverse_sharp_edge} 
Let \ref{assumption A}, \ref{assumption flatness} and \ref{assumption bounded M} be satisfied for a regular edge $\tau_0\in\pt\supp\dens$. 
Then there is $\delta_* \sim 1$ such that we have 
\[ \normsp{\BO(z)^{-1}} + \norm{\BO(z)^{-1}} \lesssim \frac{1}{ \dens(z) + \eta \dens(z)^{-1}}, \] 
for all $z \in \Hb$ satisfying $\abs{z- \tau_0} \leq \delta_*$, where $\eta = \Im z$. 
\end{theorem}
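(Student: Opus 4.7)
The plan is to identify the sole unstable direction of $\BO$ via a Perron-Frobenius-type symmetrization, compute the size of the associated small eigenvalue sharply in terms of $\sqrt{\kappa+\eta}$, and then quantify the blow-up of $\BO^{-1}$ through a spectral decomposition. First I would observe that Proposition~\ref{prop stability reference}(\ref{prop sqrt edge}) yields
\[
\dens(z)+\frac{\eta}{\dens(z)}\sim\sqrt{\kappa+\eta}
\]
uniformly in a small neighbourhood of $\tau_0$, in both the regimes $\tau\in\supp\dens$ and $\tau\notin\supp\dens$ (with $\kappa=\abs{\tau-\tau_0}$, $\eta=\Im z$). Hence the target inequality reduces to $\normsp{\BO^{-1}}+\norm{\BO^{-1}}\lesssim (\kappa+\eta)^{-1/2}$.

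Second, following the framework developed in \cite{2016arXiv160408188A}, I would represent
\[
\BO=\cV(\mathcal{U}-\cF)\cV^{-1},
\]
where $\cV$ is a bounded invertible conjugation constructed from $M$ and appropriate normalisations involving $\Im M/\dens$, $\mathcal{U}$ is unitary with respect to the Hilbert-Schmidt inner product, and $\cF$ is self-adjoint and positivity-preserving. Under Assumptions~\ref{assumption A}, \ref{assumption flatness} and \ref{assumption bounded M}, the conjugators $\cV^{\pm 1}$ remain uniformly bounded (in both the HS and the operator sense) for $z$ close to $\tau_0$. Applying the Perron-Frobenius theorem to the irreducible, positivity-preserving $\cF$ (irreducibility being a consequence of the flatness Assumption~\ref{assumption flatness}), I obtain a simple top eigenvalue $\norm{\cF}$ with a strictly positive eigenvector. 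The key quantitative input, where the regular-edge hypothesis \eqref{sqrt growth} enters, is the sharp estimate
\[
1-\norm{\cF}\sim \sqrt{\kappa+\eta},
\]
together with an order-one spectral gap separating $\norm{\cF}$ from the rest of the spectrum of $\cF$. Combined with the unitarity of $\mathcal{U}$, this produces a simple eigenvalue $\beta$ of $\BO$ with $\abs{\beta}\sim\sqrt{\kappa+\eta}$ and corresponding left/right eigenvectors $P,B$ with $\norm{P}+\norm{B}\lesssim 1$ and $\abs{\braket{P,B}}\sim 1$, exactly as claimed in Proposition~\ref{prop stability reference}(\ref{1-CMS bound with flatness}).

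Third, using this spectral data I would decompose
\[
\BO^{-1}=\beta^{-1}\cP+\BO^{-1}\cQ,\qquad \cP[R]=\frac{\braket{P,R}B}{\braket{P,B}},\qquad \cQ=\Id-\cP.
\]
The bound $\normsp{\cP}\lesssim 1$ is Cauchy-Schwarz, while $\norm{\cP}\lesssim 1$ follows from the trace-norm estimate $\abs{\braket{P,R}}=N^{-1}\abs{\Tr P^{*}R}\le \norm{R}\norm{P}_\text{hs}\lesssim \norm{R}$ together with $\norm{B}\lesssim 1$ and $\abs{\braket{P,B}}\sim 1$. The residual bound $\normsp{\BO^{-1}\cQ}\lesssim 1$ is a direct consequence of the order-one spectral gap of $\cF$ seen via the symmetrization.

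The main obstacle is the operator-norm estimate $\norm{\BO^{-1}\cQ}\lesssim 1$: the Perron-Frobenius / symmetrization argument naturally produces Hilbert-Schmidt bounds, and converting the bound on $(\mathcal{U}-\cF)^{-1}$ restricted to the complement of the top eigenvector into an operator-norm bound on $\BO^{-1}\cQ$ requires controlling the conjugators $\cV^{\pm 1}$ in operator norm. This in turn rests on uniform operator-norm bounds for $M$ and for the normalising factors built from $\Im M/\dens$ near the edge, which are delicate because $\Im M$ itself can be of order $\dens$, and must be extracted from the detailed shape analysis of $M$ underlying Theorem~\ref{thm:dens_close_to_regular_edge}. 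Once this is in place, assembling the pieces gives
\[
\normsp{\BO^{-1}}+\norm{\BO^{-1}}\le \abs{\beta}^{-1}\bigl(\normsp{\cP}+\norm{\cP}\bigr)+\normsp{\BO^{-1}\cQ}+\norm{\BO^{-1}\cQ}\lesssim \frac{1}{\sqrt{\kappa+\eta}},
\]
which in view of the first reduction proves Theorem~\ref{thm:B_inverse_sharp_edge}.
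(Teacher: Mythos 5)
Your sketch correctly identifies the symmetrization framework from \cite{2016arXiv160408188A} and correctly assembles $\BO^{-1}=\beta^{-1}\cP+\BO^{-1}\cQ$ once the sharp estimate $\abs\beta\sim\sqrt{\kappa+\eta}$ is in hand, but the central quantitative claim you make to obtain that estimate is wrong. You assert that the regular-edge hypothesis yields $1-\normsp{\cF}\sim\sqrt{\kappa+\eta}$. This fails. The identity \eqref{eq:normtwo_F} gives $1-\normsp{\cF}=\eta\,\braket{F,QQ^*}/\braket{F,\Im U}$, and since $F\sim 1$, $QQ^*\sim 1$, $\Im U\sim\dens$ by Lemma~\ref{lem:prop_m_q}, this is $1-\normsp{\cF}\sim\eta/\dens$. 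For $\tau\in\supp\dens$ and $\eta\ll\kappa$, $\dens\sim\sqrt{\kappa}$ and thus $1-\normsp{\cF}\sim\eta/\sqrt{\kappa}\ll\sqrt{\kappa}$: the Perron--Frobenius eigenvalue $\normsp{\cF}$ alone is far too close to $1$. A naive bound $\normsp{(\mathcal U-\cF)^{-1}}\lesssim(1-\normsp{\cF})^{-1}$ would therefore be catastrophically lossy inside the support.

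The missing mechanism is exactly what the Rotation--Inversion Lemma (Lemma~4.9 of \cite{2016arXiv160408188A}) supplies: it bounds $\normsp{(\cC_{U^*}-\cF)^{-1}}$ by $\big(1-\normsp{\cF}+\abs{1-\braket{F,\cC_{U^*}[F]}}\big)^{-1}$, i.e.\ the unitary $\cC_{U^*}$ rotates the top Perron--Frobenius eigenvector $F$ away from itself, producing the additional gain $\abs{1-\braket{F,\cC_{U^*}[F]}}\gtrsim\dens(\dens+\abs\sigma)$ established in the paper's proof of Proposition~\ref{pro:B_inverse_improved_bound}. The numerator $\abs\sigma=\abs{\avg{SF_U^3}}$ must then be shown of order one near a regular edge; this is the content of Proposition~\ref{pro:sigma_sim_1}, which is itself a nontrivial argument (it differentiates $\beta$ along the real line outside the self-consistent spectrum and matches against the square-root growth \eqref{sqrt growth}). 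Without invoking either the Rotation--Inversion mechanism or the $\abs\sigma\sim 1$ statement, your sketch cannot close the regime where the real part of $z$ lies in the bulk of a band near the edge and $\eta$ is small.

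Two further, smaller points. First, your reduction via $\dens+\eta/\dens\sim\sqrt{\kappa+\eta}$ depends on Proposition~\ref{prop stability reference}\eqref{prop sqrt edge}, which in this paper is derived only downstream of the very theorem you are proving (through Theorem~\ref{thm:dens_close_to_regular_edge} and Corollary~\ref{cor:Hoelder_continuity_M}); proving the bound directly in the form $(\dens+\eta/\dens)^{-1}$, as the paper does, avoids this circularity. Second, the operator-norm control of the conjugators $\cV^{\pm1}$ (i.e.\ the matrices $Q$, $Q^{-1}$ from the balanced polar decomposition) is not ``delicate'' nor extracted from the shape analysis; it is given directly by Lemma~\ref{lem:prop_m_q}\eqref{eq:q_sim_1} using only flatness and the boundedness of $M$, and the HS-to-operator-norm upgrade is handled by the elementary inequality $\norm{(\TO-\omega\Id)^{-1}}\le\abs{1-\omega}^{-1}(1+\normtwoinf{\Id-\TO}\,\normsp{(\TO-\omega\Id)^{-1}})$ invoked in the proof of Lemma~\ref{lem:prop_F_small_dens}.
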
 

From the previous theorem, we will immediately conclude the $1/2$-Hölder continuity stated in the following corollary. 
The proofs of both statements will be given in Section~\ref{subsec:proof_hoelder_continuity} below.  

\begin{corollary}[Hölder-continuity of $M$] \label{cor:Hoelder_continuity_M}
Let \ref{assumption A}, \ref{assumption flatness} and \ref{assumption bounded M} be satisfied for a regular edge $\tau_0\in\R$.
Then $M$ is uniformly $1/2$-Hölder continuous around $\tau_0$ in the sense that there is $\delta_* \sim 1$ such that 
\[ \norm{M(z_1)- M(z_2)} \lesssim \abs{z_1 - z_2}^{1/2} \] 
for all $z_1, z_2 \in \{ \tau + \ii \eta \colon \abs{\tau-\tau_0} \leq \delta_*, ~ 0 < \eta < \infty\}$. In particular, $M$ has a unique extension to $[\tau_0 -\delta_*, \tau_0 + \delta_*]$. 
\end{corollary}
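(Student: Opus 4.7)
\emph{Plan.} The strategy is to deduce the $1/2$-Hölder bound from the sharp estimate on $\BO^{-1}$ in Theorem~\ref{thm:B_inverse_sharp_edge} by integrating a pointwise estimate on $\partial_z M$ along a suitable contour in $\Hb$.

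First I would derive an equation for the derivative $\partial_z M$ by differentiating the MDE~\eqref{MDE}. Writing \eqref{MDE} as $-M^{-1} = z - A + \SS[M]$ and using $\partial_z(M^{-1}) = -M^{-1}(\partial_z M)M^{-1}$ yields $M^{-1}(\partial_z M) M^{-1} = \id + \SS[\partial_z M]$; multiplying by $M$ on both sides converts this into $\BO[\partial_z M] = M^2$, with $\BO$ as in \eqref{eq:def_BO}. Hence $\partial_z M(z) = \BO(z)^{-1}[M(z)^2]$, and Assumption~\ref{assumption bounded M} combined with Theorem~\ref{thm:B_inverse_sharp_edge} gives $\norm{\partial_z M(z)} \le M_\ast^2 \norm{\BO(z)^{-1}} \lesssim 1/(\dens(z) + \eta\, \dens(z)^{-1}) \le 1/(2\sqrt{\eta})$, where the last inequality is AM-GM. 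This bound is uniform in the strip $z = \tau + \ii \eta$ with $\abs{\tau - \tau_0} \le \delta_\ast$ and $\eta > 0$, with $\delta_\ast$ taken from Theorem~\ref{thm:B_inverse_sharp_edge}.

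Second, given $z_1, z_2$ in this strip, I would set $\eta_\ast \defeq \max(\eta_1, \eta_2, \abs{z_1 - z_2})$ and integrate $\partial_z M$ along the piecewise linear path $z_1 \to \tau_1 + \ii \eta_\ast \to \tau_2 + \ii \eta_\ast \to z_2$, which stays entirely in $\Hb$; path-independence is assured by holomorphicity of $M$ on $\Hb$ (Proposition~\ref{prop stability reference}\eqref{prop31 unique}). Since $\eta_\ast - \eta_j \le \abs{z_1 - z_2}$ and $\sqrt{\eta_\ast} \ge \sqrt{\abs{z_1 - z_2}}$, each vertical leg contributes $\int_{\eta_j}^{\eta_\ast} d\eta/\sqrt{\eta} = 2 (\sqrt{\eta_\ast} - \sqrt{\eta_j}) = 2(\eta_\ast-\eta_j)/(\sqrt{\eta_\ast}+\sqrt{\eta_j}) \le 2\sqrt{\abs{z_1-z_2}}$, while the horizontal leg at height $\eta_\ast$ contributes $\abs{\tau_1-\tau_2}/\sqrt{\eta_\ast} \le \sqrt{\abs{z_1-z_2}}$. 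Summing these gives the asserted bound $\norm{M(z_1) - M(z_2)} \lesssim \abs{z_1 - z_2}^{1/2}$.

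Finally, $1/2$-Hölder continuity implies that $\eta \mapsto M(\tau + \ii \eta)$ is Cauchy as $\eta \searrow 0$ for every fixed $\tau$ with $\abs{\tau-\tau_0} \le \delta_\ast$, so it has a unique limit in operator norm; this limit provides the continuous extension to $[\tau_0 - \delta_\ast, \tau_0 + \delta_\ast]$ and automatically inherits the same Hölder bound. The main obstacle lies not in this corollary but upstream in Theorem~\ref{thm:B_inverse_sharp_edge}: the $1/\sqrt{\eta}$ bound on $\norm{\partial_z M}$ that powers the contour argument depends on the delicate Perron--Frobenius analysis of the small eigenvalue of $\BO$ near a regular edge, which in turn rests on the factorised representation of $\BO$. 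Once that sharp bound is in hand, the corollary reduces to a routine contour estimate.
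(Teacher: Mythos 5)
Your proposal is correct in spirit and takes a genuinely different, more elementary route than the paper. Both start from $\partial_z M = \BO^{-1}[M^2]$ and the sharp bound $\norm{\BO^{-1}} \lesssim (\dens + \eta\dens^{-1})^{-1}$ from Theorem~\ref{thm:B_inverse_sharp_edge}. After that the paper keeps the $\dens$-dependent form and works with the \emph{imaginary part}: from $\norm{\partial_z \Im M} \lesssim \dens^{-1} \sim \norm{\Im M}^{-1}$ it deduces that $(\Im M)^2$ is Lipschitz on $\Dbdd$, hence $\Im M$ is $1/2$-Hölder, and then invokes a non-trivial technical lemma on matrix Stieltjes transforms (\cite[Lemma A.7]{2015arXiv150605095A}, \cite[Lemma A.1]{shapepaper}) which both upgrades Hölder continuity of $\Im M$ to Hölder continuity of $M$ itself and extends the domain to arbitrarily large $\eta$. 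You instead discard some information via AM-GM, obtaining $\norm{\partial_z M} \lesssim \eta^{-1/2}$ directly, and integrate along a piecewise-linear contour with $\eta_\ast = \max(\eta_1,\eta_2,\abs{z_1-z_2})$. This avoids the technical lemma entirely and is self-contained; your estimates on the vertical and horizontal legs are correct.

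The one genuine gap is the claim that $\norm{\partial_z M(z)} \lesssim \eta^{-1/2}$ holds \emph{uniformly on the strip} $\abs{\tau-\tau_0}\le\delta_\ast$, $\eta>0$. Theorem~\ref{thm:B_inverse_sharp_edge} is stated only on the disk $\abs{z-\tau_0}\le\delta_\ast$, so the bound is guaranteed only for $\eta\le\delta_\ast$; when $\abs{z_1-z_2}$ is comparable to or larger than $\delta_\ast$, the horizontal leg of your contour (at height $\eta_\ast$) leaves this disk. This is easy to patch and should be said: for $\eta\ge\delta_\ast/2$ one has $\dist(z,\supp\dens)\gtrsim 1$, and the Stieltjes representation \eqref{eq:Stieltjes_representation} (together with the matrix analogue) gives $\norm{\partial_z M(z)}\lesssim \dist(z,\supp\dens)^{-2}\lesssim 1\lesssim\eta^{-1/2}$ there; alternatively, for $\abs{z_1-z_2}\gtrsim 1$ the conclusion follows trivially from the uniform boundedness of $\norm{M}$. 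With this amendment the argument is complete.
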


\subsection{Analysis of the stability operator} 

In this section, we will always assume that \ref{assumption A}, \ref{assumption flatness} and \ref{assumption bounded M} are satisfied for some $\tau_0 \in \R$.  
The main result of this section is the bound on the inverse of the stability operator $\BO$ in Proposition~\ref{pro:B_inverse_improved_bound} below. 
We introduce the \emph{balanced polar decomposition} 
\begin{equation} \label{eq:balanced_polar_decomposition}
 M = Q^* U Q, 
\end{equation}
where we define
\begin{equation} \label{eq:def_q_u}
 W \defeq (\Im M)^{-1/2} (\Re M) (\Im M)^{-1/2} + \ii\id , \qquad Q \defeq \abs{W}^{1/2} (\Im M)^{1/2}, \qquad U \defeq\frac{W}{\abs{W}}.
\end{equation}
We remark that $W$ is normal, $\abs{W} \defeq (W^*W)^{1/2}$, $U$ is unitary and $\Im U$ is 
positive definite. 
In this context, the balanced polar decomposition first appeared in \cite{2016arXiv160408188A}. 
We also define 
\begin{equation} \label{eq:def_S_F_U_Sigma}
S \defeq \sign \Re U, \qquad F_U \defeq \dens^{-1} \Im U, \qquad \sigma \defeq \avg{SF_U^3}. 
\end{equation}

The quantities $\BO$, $W$, $Q$, $U$, $S$, $F_U$ and $\sigma$ introduced above all depend on $z$ through the $z$-dependence of $M$. In the following, we will 
mostly omit this dependence from our notation. 

\begin{proposition}[General bound on $\BO^{-1}$]  \label{pro:B_inverse_improved_bound}
If \ref{assumption A}, \ref{assumption flatness} and \ref{assumption bounded M} are satisfied for some $\tau_0\in \R$ then, uniformly for all $z \in \Dbdd$, we have
\begin{equation} \label{eq:B_inverse_improved_bounds}
 \normsp{\BO(z)^{-1}} + \norm{\BO(z)^{-1}}\, \lesssim \, \frac{1}{\dens(z)(\dens(z) + \abs{\sigma(z)}) + \eta\dens(z)^{-1}}, \qquad 
\quad \eta = \Im z. 
\end{equation}
\end{proposition}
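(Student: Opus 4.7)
The plan is to transform $\BO = \Id - M\SS[\genarg]M$ via the balanced polar decomposition \eqref{eq:balanced_polar_decomposition} into an operator of the form $\cU - \cF$, where $\cU$ acts as a unitary on a suitable Hilbert space (with $\avg{\genarg}$--type inner product) and $\cF$ is self-adjoint, positivity preserving and has spectral radius very close to $1$. Concretely, setting $\cV[R] \defeq Q^* R Q$ with $Q$ as in \eqref{eq:def_q_u}, a direct computation gives
\begin{equation*}
\cV^{-1}\BO\cV[R] \;=\; R - U\, \tilde\SS[R]\, U, \qquad \tilde\SS[R] \defeq Q\SS[Q^* R Q]Q^*,
\end{equation*}
which, after an additional conjugation that turns $U$ into a diagonal unitary in its spectral basis, yields the factorisation $\BO = \cV(\cU - \cF)\cV^{-1}$ announced after \eqref{eq:stab_equation_non_symm}. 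Here $\cF$ is the positivity--preserving self-adjoint symmetrisation of $\tilde\SS$, and $\cU$ a unitary built from $U$. Since $\cV$ and $\cV^{-1}$ are bounded by Assumption~\ref{assumption bounded M} and the flatness of $\SS$ (giving $Q, Q^{-1}$ bounded), it suffices to invert $\cU - \cF$.

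Next, the idea is to apply Perron--Frobenius to $\cF$: flatness guarantees $\cF$ is irreducible and positivity preserving, so it has a unique normalised positive top eigenvector $f^*$ with simple eigenvalue $\lambda^* = \normsp{\cF}$ separated from the rest of the spectrum by an $\ord(1)$ gap. One then decomposes $R = \alpha f^* + R^\perp$ with $R^\perp \perp f^*$ in the relevant inner product. On the orthogonal complement $\cU - \cF$ is invertible with norm of the inverse $\lesssim 1$, so the whole difficulty concentrates in the one-dimensional block spanned by $f^*$. In this direction
\begin{equation*}
\bigl\langle f^*,\,(\cU - \cF) f^*\bigr\rangle \;=\; \bigl\langle f^*, \cU f^*\bigr\rangle - \lambda^*.
\end{equation*}
Expanding $\cU$ into $\Re U$ and $\Im U = \dens F_U$ and using that $f^*$ is, up to small corrections, proportional to $F_U$ (this is the standard shape-analysis output imported from \cite{2016arXiv160408188A}), the real part contributes $1 - \lambda^* + \ord(\dens|\sigma|) + \ord(\dens^2)$ with sign determined by $\sigma = \avg{SF_U^3}$, while the imaginary part contributes $\sim \eta\dens^{-1}$ (because the spectral parameter enters $M$ with an imaginary shift $\eta$ and the norming of $F_U$ introduces the factor $\dens^{-1}$).

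Putting the two blocks together, the inverse of $\BO$ is bounded by the reciprocal of
\begin{equation*}
\bigl|\langle f^*, (\cU-\cF)f^*\rangle\bigr| \;\gtrsim\; \dens(\dens + |\sigma|) + \eta\dens^{-1},
\end{equation*}
which is exactly \eqref{eq:B_inverse_improved_bounds}. The bound in operator norm $\norm{\genarg}$ follows from the bound in $\normsp{\genarg}$ by a short argument using $\norm{M}\lesssim 1$ and that the unstable eigenvector $B$ of $\BO$ itself has bounded operator norm (as already asserted, modulo the sharp lower bound on the small eigenvalue, in Proposition~\ref{prop stability reference}\eqref{1-CMS bound with flatness}). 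The main obstacle is the quantitative lower bound on $1 - \lambda^*$ in terms of $\dens$ and $\sigma$: this requires showing that the top eigenvector $f^*$ of $\cF$ approximates $F_U$ with an error of smaller order than $\dens$, so that the cancellation structure inside $\langle f^*, \Re U\, f^*\rangle - \lambda^*$ is captured correctly and, in particular, no spurious $\ord(1)$ term survives.
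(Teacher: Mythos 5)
Your overall strategy is the one the paper uses: pass to the balanced polar decomposition to get $\BO = \cC_{Q^*,Q}\cC_U(\cC_U^*-\cF)\cC_{Q^*,Q}^{-1}$ with $\cF=\cC_{Q,Q^*}\SS\cC_{Q^*,Q}$ self-adjoint and positivity preserving, apply Perron--Frobenius to obtain a simple top eigenvalue $\normsp{\cF}$ of $\cF$ with eigenvector $F$ and spectral gap, and isolate that direction. There is, however, a genuine gap where you reduce to the rank-one block.

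The operator $\cC_U^*-\cF$ is not normal, and $\cC_U^*$ does not preserve the decomposition $\C F\oplus F^\perp$. You can therefore not bound $\normsp{(\cC_U^*-\cF)^{-1}}$ by the reciprocal of the naive matrix element $\scalar{F}{(\cC_U^*-\cF)[F]}=\scalar{F}{\cC_U^*[F]}-\normsp{\cF}$: for a non-normal operator a lower bound on a single quadratic form does not control the smallest singular value, and the off-diagonal coupling between $\C F$ and $F^\perp$ must be accounted for. The paper handles this via the Rotation--Inversion Lemma (Lemma~4.9 of \cite{2016arXiv160408188A}), a Schur-complement type estimate which yields $\normsp{(\cC_U^*-\cF)^{-1}}\lesssim\abs{1-\normsp{\cF}\scalar{F}{\cC_U^*[F]}}^{-1}$. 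Crucially, $1-\normsp{\cF}\scalar{F}{\cC_U^*[F]}$ is not the same as your $\scalar{F}{\cC_U^*[F]}-\normsp{\cF}$, and the difference is exactly what saves the estimate. Writing $a=\normsp{\cF}$, $b=\scalar{F}{\cC_U^*[F]}=\avg{FU^*FU^*}$, the real part of $1-ab$ equals $(1-a\avg{F\Re U F\Re U})+a\avg{F\Im U F\Im U}$, and because $a\le1$ and $\abs{\avg{F\Re U F\Re U}}\le 1$ both summands are nonnegative, so they add constructively to give $\gtrsim(1-\normsp{\cF})+\dens^2\sim\eta\dens^{-1}+\dens^2$. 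In your quantity $b-a$, the real part is $(1-a)-(1-\avg{F\Re U F\Re U})-\avg{F\Im U F\Im U}$, so the $\eta\dens^{-1}$ and the $\dens^2$ contributions carry opposite signs and can cancel when $\eta\sim\dens^3$. Since the proposition is asserted for a general $\tau_0$ (not only a regular edge), one cannot fall back on $\abs{\sigma}\sim1$ to rescue the estimate via the imaginary part in that regime; the naive quadratic form genuinely loses the $\eta\dens^{-1}$ term. You correctly sense a cancellation issue, but attribute it to the precision of the eigenvector approximation $f^*\approx F_U/\normtwo{F_U}$; that approximation only has error $\ord(\eta\dens^{-1})$ and is not the crux---the crux is using the Rotation--Inversion quantity $1-\normsp{\cF}\scalar{F}{\cC_U^*[F]}$ rather than the quadratic form.

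There is also a bookkeeping error: you assign $\eta\dens^{-1}$ to the imaginary part and $\dens\abs{\sigma}$ to the real part, but it is the other way around. The $\eta\dens^{-1}$ contribution comes from the \emph{real} quantity $1-\normsp{\cF}$ (cf.~\eqref{eq:normtwo_F}), while the $\dens\abs{\sigma}$ contribution is \emph{imaginary}, arising from $\Im\avg{FU^*FU^*}=-2\avg{F\Im U F\Re U}\approx -2\dens\normtwo{F_U}^{-2}\sigma$.
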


This proposition will be shown at the end of the present section.  
Now, we apply it to show that $M$ is $1/3$-Hölder continuous. 

\begin{corollary}[$1/3$-Hölder continuity of $M$] \label{cor:hoelder_1_3}
Let \ref{assumption A}, \ref{assumption flatness} and \ref{assumption bounded M} be satisfied for some $\tau_0 \in \R$.  
Then the solution $M$ of the MDE, \eqref{MDE}, is uniformly $1/3$-Hölder continuous around $\tau_0$ in the sense that, 
for each $\theta\in (0, \omega_*)$, we have 
\[ \norm{M(z_1)- M(z_2)} \lesssim_\theta \abs{z_1 - z_2}^{1/3} \] 
for all $z_1, z_2 \in \{ \tau + \ii \eta \colon \abs{\tau- \tau_0} \leq \omega_* - \theta, ~ 0 < \eta < \infty \}$.    
\end{corollary}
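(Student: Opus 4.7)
The plan is to differentiate the MDE with respect to $z$ and reduce the Hölder estimate to a pointwise bound on $\partial_z M$, which I can then integrate along a short path. Since $M$ is holomorphic on $\HC$ by Proposition~\ref{prop stability reference}\eqref{prop31 unique}, differentiating \eqref{MDE} and using $z-A+\SS[M] = -M^{-1}$ yields, after left-multiplication by $M$, the identity $\BO[\partial_z M] = M^2$, whence
\[ \partial_z M \,=\, \BO^{-1}[M^2]. \]
Inserting $\norm M \le M_\ast$ from Assumption~\ref{assumption bounded M} together with the sharp inverse estimate of Proposition~\ref{pro:B_inverse_improved_bound}, and discarding the nonnegative $\abs\sigma$-contribution in the denominator, I obtain the pointwise bound
\[ \norm{\partial_z M(z)} \,\lesssim\, \frac{1}{\dens(z)^2 + \eta\dens(z)^{-1}}, \qquad z = \tau + \ii\eta \in \Dbdd. \]

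The key elementary input is then the $\dens$-free lower bound $\dens^2 + \eta/\dens \gtrsim \eta^{2/3}$, valid for all $\dens > 0$ by a one-variable minimization (the minimum is attained at $\dens = (\eta/2)^{1/3}$). This upgrades the previous estimate to the uniform derivative bound $\norm{\partial_z M(z)} \lesssim \eta^{-2/3}$ on $\Dbdd$, and the exponent $1/3$ in the corollary is then just the value of $\int_0^h t^{-2/3}\dd t \sim h^{1/3}$. Given this bound, I conclude by integrating $\partial_z M$ along a piecewise horizontal/vertical path connecting $z_1$ and $z_2$: writing $h \defeq \abs{z_1 - z_2}$ and $\eta_i \defeq \Im z_i$, if $\min(\eta_1,\eta_2) \geq h$ I integrate directly along the segment $[z_1,z_2]$ (on which $\eta \gtrsim h$), producing $\norm{M(z_1)-M(z_2)} \lesssim h \cdot h^{-2/3} = h^{1/3}$; otherwise I first travel vertically from each $z_i$ up to $\tau_i + \ii h$ at a cost of $\int_{\eta_i}^{h} t^{-2/3}\dd t \lesssim h^{1/3}$, and then horizontally at height $h$, which falls into the first case.

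The main (and essentially only) obstacle will be the bookkeeping that keeps the entire path inside $\Dbdd$, the domain on which Proposition~\ref{pro:B_inverse_improved_bound} was proven. Since $z_1, z_2$ satisfy $\abs{\Re z_i - \tau_0} \leq \omega_\ast - \theta$, for $h \le \theta$ the constructed path stays inside $\{\abs{\tau-\tau_0}\le \omega_\ast\}$; if some $\eta_i > 1$ I first lower it to $\eta = 1$, paying only a constant cost via the Stieltjes-transform bound $\norm{\partial_z M(z)} \lesssim \eta^{-2}$ valid for large $\eta$, after which the argument above applies. For $h > \theta$ the estimate is trivial from $\norm{M(z_1)-M(z_2)} \le 2 M_\ast \lesssim_\theta h^{1/3}$, which is precisely the source of the $\theta$-dependence in the implied constant. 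Beyond this bookkeeping the entire content of the corollary is packaged into Proposition~\ref{pro:B_inverse_improved_bound}, and the Hölder exponent $1/3$ is dictated by the sharp minimization inequality above.
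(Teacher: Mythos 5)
Your argument is correct but takes a genuinely different route from the paper's. Both start from $\partial_z M=\BO^{-1}[M^2]$ (equation \eqref{basic MDE equations}) and Proposition~\ref{pro:B_inverse_improved_bound}, but the paper keeps only the $\dens^2$ portion of the denominator, couples it with $\Im M\sim \dens\,\id$ from \eqref{eq:im_M_sim_avg} to get $\norm{\partial_z\Im M}\lesssim\norm{\Im M}^{-2}$, concludes that $(\Im M)^3$ is Lipschitz, invokes the operator-monotonicity theorem of Bhatia (Theorem~X.1.1 of \cite{MR1477662}) to promote this to $1/3$-Hölder continuity of the matrix-valued map $\Im M$, and finally passes from $\Im M$ to $M$ via the harmonic-conjugation lemmas cited from \cite{2015arXiv150605095A,shapepaper} --- this last reconstruction step is the source of the $\theta$-shrinking of the domain in the paper's proof. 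You instead retain the $\eta\dens^{-1}$ term and use the elementary one-variable minimization $\dens^2+\eta\dens^{-1}\gtrsim\eta^{2/3}$ to deduce the $\dens$-free, fully matrix-level bound $\norm{\partial_z M}\lesssim\eta^{-2/3}$, which you integrate along a piecewise path. This avoids both the operator-function calculus and the $\Im M\to M$ reconstruction, makes the exponent $1/3$ visibly the primitive of $\eta^{-2/3}$, and, as you note, gives Corollary~\ref{cor:Hoelder_continuity_M} by the same mechanism with Theorem~\ref{thm:B_inverse_sharp_edge} and the classical AM-GM inequality $\dens+\eta\dens^{-1}\geq 2\sqrt{\eta}$. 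One small remark: in your proof the $\theta$-dependence of the implied constant is not actually forced, because a vertical-horizontal-vertical path between two points with $\abs{\Re z_i-\tau_0}\leq\omega_\ast-\theta$ never leaves the window $\abs{\Re z-\tau_0}\leq\omega_\ast$ in its real part, and the $\eta>1$ segments are handled by the Stieltjes bound $\norm{\partial_z M}\lesssim\eta^{-2}$ whose constant is $\theta$-free; the trivial-bound fallback for $h>\theta$ is a convenient simplification, not a necessity.
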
 

Before we prove the previous corollary, we state and prove the following lemma. 
It collects a few basic properties of $M$, $Q$ and $U$ which will often be used in the following.  

\begin{lemma}[Properties of $M$, $Q$ and $U$] \label{lem:prop_m_q}
Uniformly for $z \in \Dbdd$, we have 
\begin{subequations}
\begin{align}
\norm{M(z)^{-1}} & \sim \norm{M(z)} \sim 1, \label{eq:M_inverse_bounded}\\ 
\Im M(z) & \sim \avg{\Im M(z)}, \label{eq:im_M_sim_avg} \\ 
\norm{Q(z)} & \sim \norm{Q(z)^{-1}} \sim 1, \label{eq:q_sim_1}\\ 
\Im U(z) & \sim \avg{\Im U(z)} \sim \dens(z), \label{eq:im_u_sim_rho} 
\end{align} 
\end{subequations}
where $A\lesssim B$ and $A\sim B$ for matrices $A,B$ indicate that $A\le C B$ and $cB\le A\le CB$ for some constants $c,C$ in the sense of quadratic forms. 
\end{lemma}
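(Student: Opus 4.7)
The four parts are treated in the order \eqref{eq:M_inverse_bounded}, \eqref{eq:im_M_sim_avg}, \eqref{eq:q_sim_1}, \eqref{eq:im_u_sim_rho}, since each subsequent bound uses the previous ones.

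For \eqref{eq:M_inverse_bounded}, the upper bound $\norm{M}\le M_*$ is exactly Assumption~\ref{assumption bounded M}. Solving the MDE \eqref{MDE} for $M^{-1}$ gives $M^{-1} = -(z - A + \SS[M])$, so it remains to bound $\abs{z}$, $\norm{A}$ and $\norm{\SS[M]}$; the first two are controlled by the domain $\Dbdd$ and Assumption~\ref{assumption A}, while for $\norm{\SS[M]}$ I decompose $M$ into four spectrally-positive pieces of norm $\le \norm{M}$ and apply flatness (Assumption~\ref{assumption flatness}) to get $\norm{\SS[T]}\lesssim \braket{T}\le\norm{T}$ on each piece. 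The matching $\norm{M},\norm{M^{-1}}\gtrsim 1$ follow from $\norm{M}\norm{M^{-1}}\ge 1$.

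For \eqref{eq:im_M_sim_avg}, I combine $\Im(M^{-1}) = -M^{-1}(\Im M) M^{-*}$ with the imaginary part of the MDE to obtain the identity
\[
 \Im M \;=\; \eta\, M M^* \;+\; M \SS[\Im M]\, M^*.
\]
Flatness provides $c\braket{\Im M}\id \le \SS[\Im M] \le C\braket{\Im M}\id$, and \eqref{eq:M_inverse_bounded} forces $M M^* \sim \id$. Dropping the first term of the identity gives $\Im M \gtrsim \braket{\Im M}\id$. For the upper bound, tracing the identity first yields $\braket{\Im M}\gtrsim \eta\braket{MM^*}\gtrsim \eta$, so $\eta\lesssim \braket{\Im M}$, and the identity then gives $\Im M\lesssim \braket{\Im M}\id$.

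For \eqref{eq:q_sim_1} and \eqref{eq:im_u_sim_rho} the key is two closed-form identities. Substituting the definition of $X$ into $W = X + \ii\id$ yields $W = (\Im M)^{-1/2} M (\Im M)^{-1/2}$, and inverting $\abs{W}^2 = W^*W$ together with the MDE consequence $M^{-1}(\Im M)M^{-*} = \eta + \SS[\Im M]$ gives the crucial identity
\[
 \abs{W}^{-2} \;=\; (\Im M)^{1/2}\bigl(\eta + \SS[\Im M]\bigr)(\Im M)^{1/2}.
\]
From the first identity and \eqref{eq:im_M_sim_avg} I obtain $\norm X \lesssim \dens^{-1}$, hence $\norm{\abs{W}}\lesssim \dens^{-1}$; from the second, $\norm{\abs{W}^{-2}}\lesssim \braket{\Im M}^2\sim\dens^2$, hence $\norm{\abs{W}^{-1}}\lesssim\dens$. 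Inserting these two-sided bounds into $Q^*Q = (\Im M)^{1/2}\abs{W}(\Im M)^{1/2}$ and $Q^{-1}(Q^{-1})^* = (\Im M)^{-1/2}\abs{W}^{-1}(\Im M)^{-1/2}$ the $\dens^{\pm 1}$ factors cancel and I deduce $\norm{Q}\sim\norm{Q^{-1}}\sim 1$. Since $\Im U = \abs{W}^{-1}$, the bounds on $\abs{W}^{\pm 1}$ translate directly into $\Im U \sim \dens\,\id$, and averaging gives $\braket{\Im U}\sim\dens$.

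The decisive step is the identity $\abs{W}^{-2} = (\Im M)^{1/2}(\eta + \SS[\Im M])(\Im M)^{1/2}$: it reveals the \emph{raison d'être} of the balanced polar decomposition, since the two $(\Im M)^{1/2}$-halves exactly cancel the singular $(\Im M)^{-1}$ in $\abs{W}^2 = (\Im M)^{-1/2} M^*(\Im M)^{-1} M (\Im M)^{-1/2}$ and leave only the benign operator $\eta + \SS[\Im M]\sim\dens\,\id$. Once this identity is available, everything else reduces to routine bookkeeping from \eqref{eq:im_M_sim_avg}.
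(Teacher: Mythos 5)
Your proof is correct and rests on an argument that is genuinely different from the paper's for \eqref{eq:q_sim_1} and \eqref{eq:im_u_sim_rho}. For \eqref{eq:M_inverse_bounded} and \eqref{eq:im_M_sim_avg} you supply the missing details that the paper delegates to the proof of Proposition~4.2 in \cite{2016arXiv160408188A}; your argument (bound $\norm{M^{-1}}$ via the MDE, decompose into positive pieces for flatness, derive $\Im M = \eta M M^* + M\SS[\Im M]M^*$ and trace it to get $\eta\lesssim\braket{\Im M}$) is exactly what is intended there. The interesting divergence is in the treatment of $Q$ and $U$. The paper attacks $Q^*Q = (\Im M)^{1/2}\bigl(\id + (\Im M)^{-1/2}(\Re M)(\Im M)^{-1}(\Re M)(\Im M)^{-1/2}\bigr)^{1/2}(\Im M)^{1/2}$ head-on, replacing the inner $(\Im M)^{-1}$ by $\braket{\Im M}^{-1}\id$ using \eqref{eq:im_M_sim_avg} and operator monotonicity of the square root, which reduces it to $(\Re M)^2 + (\Im M)^2 = \tfrac12(MM^*+M^*M)\sim\id$; then $\Im U = (Q^*)^{-1}(\Im M)Q^{-1}\sim\dens\id$ is read off from \eqref{eq:q_sim_1}. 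You instead observe $W = (\Im M)^{-1/2}M(\Im M)^{-1/2}$ and invert $\abs{W}^2$, which turns the singular central factor $(\Im M)^{-1}$ into $M^{-1}(\Im M)(M^*)^{-1} = \eta + \SS[\Im M]$ by the imaginary part of the MDE; this yields the closed-form identity $\abs{W}^{-2} = (\Im M)^{1/2}(\eta + \SS[\Im M])(\Im M)^{1/2}$ from which the two-sided bound $\abs{W}\sim\dens^{-1}\id$ and hence $Q^*Q\sim\id$ and $\Im U = \abs{W}^{-1}\sim\dens\id$ follow at once. Both routes are rigorous; yours makes the role of the MDE more transparent and gives $\Im U = \abs{W}^{-1}$ directly rather than via $Q$-conjugation, while the paper's is slightly shorter once one is willing to invoke operator monotonicity and cite the prior reference for the first two bounds.
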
 

\begin{proof}[Proof of Lemma~\ref{lem:prop_m_q}]
The bounds in \eqref{eq:M_inverse_bounded} and \eqref{eq:im_M_sim_avg} follow easily  from the bound on $\norm{M}$ on $\Dbdd$ as well as the flatness of $\SS$ 
(see e.g.~the proof of Proposition 4.2 in \cite{2016arXiv160408188A}).  

For the proof of \eqref{eq:q_sim_1}, we use the monotonicity of the square root and \eqref{eq:im_M_sim_avg} to obtain 
\[\begin{aligned} 
 Q^* Q & = (\Im M)^{1/2} ( 1 + (\Im M)^{-1/2} (\Re M) (\Im M)^{-1} (\Re M) (\Im M)^{-1/2})^{1/2} (\Im M)^{1/2} \\ 
 & \sim \avg{\Im M}^{-1/2}(\Im M)^{1/2} \Big( (\Im M)^{-1/2} ( (\Im M)^2 + (\Re M)^2) (\Im M)^{-1/2} \Big)^{1/2} (\Im M)^{1/2}. 
\end{aligned} \] 
Thus, employing $(\Re M)^2 + (\Im M)^2 \sim 1$ by \eqref{eq:M_inverse_bounded} yields \eqref{eq:q_sim_1} due to \eqref{eq:im_M_sim_avg}. 

Owing to \eqref{eq:q_sim_1}, \eqref{eq:im_u_sim_rho} is a direct consequence of \eqref{eq:im_M_sim_avg}. 
This completes the proof of Lemma~\ref{lem:prop_m_q}. 
\end{proof}

In the following, we will use the derivative of $M$ with respect to $z$ several times. 
For $z \in \Hb$, we take the derivative of \eqref{MDE} with respect to $z$. Owing to the invertibility of $\BO=\BO(z)$, this yields 
\begin{equation} \label{basic MDE equations}
 \pt_z M(z) = \BO^{-1}[M(z)^2] 
\end{equation}
for $z \in \Dbdd$.

\begin{proof}[Proof of Corollary~\ref{cor:hoelder_1_3}]
As $\pt_z \Im M(z) = (2\ii)^{-1} \pt_z M(z)$ due to the analyticity of $M$, we conclude from \eqref{basic MDE equations} and \eqref{eq:B_inverse_improved_bounds} and \eqref{eq:im_M_sim_avg} that 
\[ \norm{\pt_z \Im M(z)} \lesssim \dens(z)^{-2} \sim \norm{\Im M(z)}^{-2}. \] 
This implies that $z \mapsto (\Im M(z))^3$ is Lipschitz-continuous on $\Dbdd$. 
Therefore, $\Im M(z)$ is $1/3$-Hölder continuous on $\Dbdd$ (see e.g.~Theorem~X.1.1 in \cite{MR1477662}) and, thus, 
$M$ is uniformly $1/3$-Hölder continuous on $\{ \tau + \ii \eta \colon \abs{\tau - \tau_0} \leq \omega_* - \theta,~0 < \eta < \infty \}$ 
for all $\theta\in (0,\omega_*)$ (see e.g.~Lemma~A.7 in \cite{2015arXiv150605095A} as well as Lemma~A.1 in \cite{shapepaper} for a slightly 
more general formulation). 
\end{proof} 

For the analysis of the stability operator $\BO$ defined in \eqref{eq:def_BO}, 
we now introduce the Hermitian operator $\cF\colon \C^{N\times N} \to \C^{N\times N}$ defined through
\begin{equation} \label{eq:def_F}
 \cF \defeq \cC_{Q,Q^*}\SS \cC_{Q^*,Q}. 
\end{equation}

Here, we used the following notation for operators on $\C^{N\times N}$. For $T_1, T_2 \in \C^{N\times N}$, we define 
the operator $\cC_{T_1, T_2}\colon \C^{N\times N} \to \C^{N\times N}$ through 
\[ \cC_{T_1,T_2}[R] = T_1 R T_2 \] 
for all $R \in \C^{N\times N}$. We also set $\cC_T \defeq \cC_{T,T}$.  
The importance of $\cF$ for the analysis of $\BO$ and its inverse comes from the following consequence of the balanced polar decomposition \eqref{eq:balanced_polar_decomposition}:
\begin{equation} \label{eq:representation_BO}
 \BO=\Id - \cC_M \SS = \cC_{Q^*,Q} \cC_U ( \cC_U^* - \cF) \cC_{Q^*,Q}^{-1}. 
\end{equation}

When $\dens=\dens(z)$ is small, we will view  $\BO$  as a perturbation of the operator $\BO_0$, which we introduce now. We define 
\begin{equation} \label{eq:def_B_0_E}
\BO_0 \defeq \cC_{Q^*,Q}( \Id - \cC_S \cF) \cC_{Q^*,Q}^{-1}, \qquad \cE \defeq 
(\cC_{Q^*SQ} - \cC_M)\SS=\cC_{Q^*,Q}(\cC_S - \cC_U)\cF\cC_{Q^*,Q}^{-1},  
\end{equation}
with $U$ and $Q$ defined in \eqref{eq:def_q_u}, $S$ defined in \eqref{eq:def_S_F_U_Sigma} and $\cF$ defined in \eqref{eq:def_F}.
Note $\BO_0 = \Id - \cC_{Q^*SQ}\SS$, i.e.~in the definition of $\BO$, the unitary matrix $U$ in $M=Q^*UQ$ is replaced by $S$. 
Thus, we have $\BO = \BO_0 + \cE$. 

In the following, we will often use \eqref{eq:q_sim_1} and \eqref{eq:im_u_sim_rho}. 
In particular, since $\id  - \abs{\Re U } = \id  - \sqrt{\id  - (\Im U)^2} \leq (\Im U)^2 \lesssim \dens^2$, we also obtain  
\begin{equation} \label{eq:Re_u_s}
 \Re U = S + \ord(\dens^2), \qquad \Im U \,=\, \ord(\dens)\,,\qquad \Re M = Q^* S Q + \ord(\dens^2) 
\end{equation}
and with $\cC_S-\cC_U =\ord(\norm{S-U})= \ord(\dens)$ we get 
\begin{equation} \label{eq:E=ord rho}
\cE\, =\,\ord(\dens)\,.
\end{equation}
Here, we use the notation $\mathcal{R} = \mathcal{T} + \ord(\alpha)$ for operators $\mathcal{R}$ and $\mathcal{T}$ 
on $\C^{N\times N}$ and $\alpha>0$ if $\norm{\mathcal{R} - \mathcal{T}} \lesssim \alpha$.
By the functional calculus, the normal matrices $U$, $\Re U$, $S$ and $F_U$ commute. Hence, $\cC_S[F_U] = F_U$. 

The MDE, \eqref{MDE}, the balanced polar decomposition, $M = Q^* U Q$, and the definition of $\cF$ in \eqref{eq:def_F} yield
\begin{equation} \label{eq:dyson_second_version}
 - U^* = Q(z-A)Q^* + \cF[U].
\end{equation}
We take the imaginary part of \eqref{eq:dyson_second_version} and use \eqref{eq:q_sim_1} 
 as well as \eqref{eq:im_u_sim_rho} to conclude that 
\begin{equation} \label{eq:F_f_u}
 (\Id - \cF) [F_U] = \eta \dens^{-1} QQ^* = \ord(\eta\dens^{-1}). 
\end{equation}

We also introduce the operator $\BO_*$, and view it as a perturbation of $\BO_0$, via 
\[ \BO_* \defeq \Id - \cC_{M^*,M} \SS, \qquad \cE_* \defeq  
(\cC_{Q^*SQ} - \cC_{M^*,M})\SS=\cC_{Q^*,Q}(\cC_S - \cC_{U^*,U})\cF\cC_{Q^*,Q}^{-1}.  \] 
Hence, we have $\BO_* = \BO_0 + \cE_*$. 
Analogously to \eqref{eq:E=ord rho}, we conclude from \eqref{eq:Re_u_s} that 
\begin{equation} \label{eq:E_star_ord_rho}
 \cE_* = \ord(\dens). 
\end{equation}

In the following, for $z\in \C$ and $\eps>0$, we denote by $D_\eps(z) \defeq \{ w \in \C \colon \abs{z-w} < \eps\}$ the disk in $\C$ of radius $\eps$ around $z$.

\begin{lemma}[Spectral properties of stability operator for small density] \label{lem:prop_F_small_dens} 
Let $\TO \in \{\Id-\cF, \Id-\cC_S \cF,\BO_0,\BO,\BO_*\}$.  Then there are $\dens_* \sim 1$ and $\eps \sim 1$ such that
\begin{equation} \label{eq:B_0_resolvent_bound}
 \normsp{( \TO-\omega \Id)^{-1}} +\norm{( \TO-\omega \Id)^{-1}} +\norm{(  \TO^*-\omega \Id)^{-1}} \lesssim 1 
\end{equation}
uniformly for all $z \in\Dbdd$ satisfying $\dens(z)+\eta \dens(z)^{-1}\leq \dens_*$ and for all $\omega \in \C$ with $\omega \not \in D_{\eps}(0) \cup D_{1-2 \eps}(1) $. 
Furthermore, there is a single simple (algebraic multiplicity $1$) eigenvalue $\lambda$ in the disk around $0$, i.e.~
\begin{equation}
\label{eq:nondegeneracy for CF}
\spec(\TO) \cap D_{\eps}(0)\,=\, \{\lambda\}\quad \text{and} \quad \rank \cP_\TO\,=\, 1\,, \quad \text{where} \quad 
\cP_\TO\,\defeq \, -\frac{1}{2\pi\ii}\int_{\partial D_\eps(0)}(\TO-\omega\Id)^{-1} \di\omega \,.
\end{equation}
\end{lemma}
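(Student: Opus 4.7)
The plan is to establish the spectral picture for $\Id-\cF$ first and then transfer it along the chain $\Id-\cF \to \Id-\cC_S\cF \to \BO_0 \to \BO,\BO_*$ via an isometry argument, an explicit similarity, and finally a small-norm perturbation of the resolvent.

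The self-adjoint operator $\cF$ on the Hilbert-Schmidt space is positivity-preserving by flatness of $\SS$ (Assumption~\ref{assumption flatness}), and \eqref{eq:F_f_u} identifies $F_U$ as a near-eigenvector with eigenvalue close to $1$ up to an error $\ord(\eta\dens^{-1})$. Combined with $F_U \sim \dens$ from \eqref{eq:im_u_sim_rho} and $\norm{QQ^*}\sim 1$ from \eqref{eq:q_sim_1}, the Perron-Frobenius type argument from \cite{2016arXiv160408188A} shows that the largest eigenvalue $\lambda_*$ of $\cF$ is simple, satisfies $\lambda_* = 1 + \ord(\eta\dens^{-1})$, and is separated from the rest of $\spec(\cF)\subset \R$ by a gap of order one. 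For $\dens_*$ and $\eps$ both of order one and small enough, it follows that $\Id-\cF$ has a single simple eigenvalue in $D_\eps(0)$ with the remainder of the spectrum contained in $D_{1-2\eps}(1)$, and the resolvent bound \eqref{eq:B_0_resolvent_bound} is then a direct consequence of the spectral theorem.

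For $\Id-\cC_S\cF$, although $\cC_S\cF$ is not self-adjoint, the involution $\cC_S$ is an isometry on the Hilbert-Schmidt space (since $S$ is self-adjoint with $S^2=\id$) and crucially satisfies $\cC_S[F_U] = F_U$. Let $v_*$ denote the exact top eigenvector of $\cF$; the gap proven in the previous step together with standard perturbation stability of simple isolated eigenvalues forces $v_* = F_U + \ord(\eta\dens^{-1})$ in Hilbert-Schmidt norm after suitable normalization, so the HS-orthogonal rank-one projection $P_*$ onto $v_*$ satisfies $\cC_S P_* = P_* + \ord(\eta\dens^{-1})$. Decomposing $\cF = \lambda_* P_* + \cF Q_*$ with $Q_* = \Id-P_*$, one has $\normsp{\cC_S\cF Q_*} \le \normsp{\cF Q_*} \le 1-c$ for some $c\sim 1$ by the gap, so $\cC_S\cF$ is a rank-one perturbation of a strict contraction and therefore has a single simple eigenvalue close to $1$ and its remaining spectrum in a disk about the origin of radius $\le 1-c/2$. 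The same reasoning applied to $(\cC_S\cF)^* = \cF\cC_S$ (which is moreover similar to $\cC_S\cF$ via the isometry $\cC_S$) produces the corresponding adjoint bound. Translating, $\Id-\cC_S\cF$ has its spectrum distributed between $D_\eps(0)$ (one simple eigenvalue) and $D_{1-2\eps}(1)$ as required.

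The remaining three operators are handled together. By \eqref{eq:def_B_0_E} one has $\BO_0 = \cC_{Q^*,Q}(\Id-\cC_S\cF)\cC_{Q^*,Q}^{-1}$, and $\cC_{Q^*,Q}$ has bounded norm and bounded inverse by \eqref{eq:q_sim_1}; spectrum and spectral projector thus transfer unchanged, while resolvent norms transfer with only a constant loss in both $\normsp{\genarg}$ and the operator-norm-induced $\norm{\genarg}$. Finally, $\BO = \BO_0 + \cE$ and $\BO_* = \BO_0 + \cE_*$ with $\cE, \cE_* = \ord(\dens)$ by \eqref{eq:E=ord rho} and \eqref{eq:E_star_ord_rho}; taking $\dens_*$ small enough that this perturbation is strictly smaller than the resolvent bound already obtained for $\BO_0$, the second resolvent identity yields \eqref{eq:B_0_resolvent_bound} uniformly for $\omega \notin D_\eps(0) \cup D_{1-2\eps}(1)$, while the Riesz projector $\cP_\TO$ depends continuously on the perturbation and therefore remains of rank one. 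The main obstacle is the passage from $\Id-\cF$ to $\Id-\cC_S\cF$: preserving the simple-isolated-eigenvalue structure under composition by the non-normal $\cC_S$ is feasible only because the identity $\cC_S[F_U]=F_U$ makes the one-dimensional top eigenspace of $\cF$ nearly invariant under $\cC_S$, while outside this eigenspace $\cC_S$ is harmless since $\cF$ is already a strict contraction there.
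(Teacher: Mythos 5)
Your overall scaffold matches the paper's: reduce to the self-adjoint operator $\cF$ via Perron--Frobenius facts from \cite{2016arXiv160408188A}, pass to $\Id-\cC_S\cF$ using $\cC_S[F_U]=F_U$, then transfer to $\BO_0$ by the similarity $\cC_{Q^*,Q}$ and to $\BO,\BO_*$ by the $\ord(\dens)$ perturbations $\cE,\cE_*$. The meaningful difference is the middle step: the paper interpolates $\Id-\cV_t\cF$ with $\cV_t=(1-t)\Id+t\cC_S$, proves the uniform resolvent bound \eqref{eq:T 2 norm bound} for all $t$ by the quadratic-form estimate \eqref{eq:estimate_T_omega}, and then deduces $\rank\cP_{\TO_1}=1$ from continuity of the Riesz projector along the path. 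You attempt a direct one-shot argument at $t=1$, which can be made to work, but the way you phrase the decisive step is not a valid inference.

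Concretely, "$\cC_S\cF$ is a rank-one perturbation of a strict contraction and therefore has a single simple eigenvalue close to $1$ and its remaining spectrum in a disk about the origin of radius $\le 1-c/2$" is false as a general principle. The rank-one operator $\lambda_*\cC_S P_*$ has norm $\approx 1$ (since $\cC_S$ is a Hilbert--Schmidt isometry and $\lambda_*=1+\ord(\eta\dens^{-1})$), i.e.~the perturbation is \emph{not} small compared with the gap $c$. A large rank-one perturbation of a strict contraction need not produce one outlying eigenvalue with the rest clustered near the origin; one can easily cook up $2\times 2$ examples with $\norm{T}=1-c$ and $R$ rank one, norm $1$, where $\spec(T+R)$ consists of two eigenvalues both of modulus $1-c$, neither close to $1$. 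What actually rescues the argument is the fact you state but do not exploit: writing $A:=\lambda_*P_*+\cC_S\cF Q_*$, one has $AP_*=\lambda_*P_*$ exactly (so $\Ran P_*$ is invariant), and $P_*AQ_*=v_*\langle\cC_S v_*,\cF Q_*[\genarg]\rangle=\ord(\eta\dens^{-1})$ because $\cC_S v_*=v_*+\ord(\eta\dens^{-1})$ and $\cF Q_*$ maps into $(\Ran P_*)^\perp$. Hence $A$ is \emph{block upper-triangular} in $\Ran P_*\oplus\Ran Q_*$ with diagonal blocks $\lambda_*$ and $Q_*\cC_S\cF Q_*$ (norm $\le 1-c$), and $\cC_S\cF=A+\ord(\eta\dens^{-1})$. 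It is this near-triangular structure, not the bare rank-one perturbation count, that yields the simple outlying eigenvalue, the contracting complement, and along the way the resolvent bound \eqref{eq:B_0_resolvent_bound}. If you rewrite the key middle step in these terms, the rest of your proof goes through, and you obtain a shorter, interpolation-free alternative to the paper's argument.
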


\begin{proof} First, we introduce the bounded operators $\cV_t\colon \C^{N\times N} \to \C^{N\times N}$ for $t \in [0,1]$ interpolating between $\Id$ and $\cC_S$ by
\[ \cV_t \defeq  (1-t) \Id + t \cC_S\,.\]
We will perform the proof one by one for the choices $\TO=\Id-\cF,\Id-\cV_t\cF ,\BO_0,\BO,\BO_*$ in that order. 
We will first show that the operator $\Id - \cF$ has a spectral gap above the single eigenvalue around 0, so for this choice the statements are easy. Then we perform two approximations. First, we interpolate between $\Id-\cF$ and $\Id - \cC_S\cF$
via $\Id - \cV_t \cF$. This gives Lemma \ref{lem:prop_F_small_dens} for $\TO = \BO_0$. 
Then we use perturbation theory to get the results for $\TO = \BO = \BO_0 + \ord(\dens)$ and for $\TO = \BO_* =\BO_0 + \ord(\dens)$.  
Note that for all these choices of $\TO$ the bound $\normtwoinf{\Id-\TO} \lesssim 1$ holds due to $\normtwoinf{\SS} \lesssim 1$, $\norm{M} \lesssim 1$ 
and \eqref{eq:q_sim_1}. Hence, the invertibility of $\TO-\omega \Id$ as an operator on $(\C^{N\times N}, \norm{\genarg})$ and on $(\C^{N\times N}, \normtwo{\genarg})$   are therefore closely related as 
\[ \norm{(\TO-\omega \Id)^{-1}} \leq \abs{1-\omega}^{-1} ( 1 + \normtwoinf{\Id - \TO} \normsp{(\TO-\omega \Id)^{-1}} ). \] 
The proof of this bound is elementary, see e.g.~Lemma~B.2 (ii) of \cite{shapepaper}.  
In particular, it suffices to show \eqref{eq:nondegeneracy for CF} and the $\normsp{\,\cdot\,}$-norm bound
\begin{equation} \label{eq:T 2 norm bound}
 \normsp{( \TO-\omega \Id)^{-1}} \,\lesssim\, 1\,,
\end{equation}
for $\omega \not \in D_{\eps}(0) \cup D_{1-2 \eps}(1) $  in \eqref{eq:B_0_resolvent_bound} to establish the lemma. 
For $\TO=\Id-\cF$ both of these assertions are true due to the following facts about the operator $\cF$ 
that have been the backbone of the analysis of \cite{2016arXiv160408188A}:   

\begin{enumerate}[(a)]
\item \label{item:F_simple_eigenvalue} The norm $\normsp{\cF}$ of the Hermitian operator $\cF \colon \C^{N\times N} \to \C^{N\times N}$ is a simple eigenvalue of $\cF$. 
Moreover, there is a unique, positive definite eigenvector 
$F \in \C^{N\times N}$ such that $\cF[F] = \normsp{\cF} F$ and $\normtwo{F} = 1$. This eigenvector satisfies 
\begin{equation} \label{eq:normtwo_F}
 1- \normsp{\cF} = (\Im z) \frac{\scalar{F}{QQ^*}}{\scalar{F}{\Im U}}. 
\end{equation}
In particular, $\normsp{\cF}\leq 1$. 
\end{enumerate}
Furthermore, uniformly for all $z \in \Dbdd$, the following properties hold true: 
\begin{enumerate}[(a)]
\addtocounter{enumi}{1} 
\item The eigenvector $F$ is bounded from above and below, i.e.~
\begin{equation}
F \sim 1.  \label{eq:f_sim_norm_m} 
\end{equation}
\item \label{item:F_spectral_gap} The operator $\cF$ has a spectral gap $\vartheta\sim 1$, i.e.~ 
\begin{equation}
 \spec(\cF/\normsp{\cF}) \subset [-1 + \vartheta, 1- \vartheta] \cup \{ 1\}.\label{eq:spec_F}
\end{equation}
\item \label{item:eigenvector_approx} The eigenvector $F$, $\cF F= \normsp{\cF}F$, satisfies 
\begin{equation} \label{eq:f_u_approx_f}
F  = \normtwo{F_U}^{-1} F_U+\ord(\eta \dens^{-1})\,,
\end{equation}
\end{enumerate}
These facts are proven as Lemma~4.7 in \cite{2016arXiv160408188A} 
using Lemma~\ref{lem:prop_m_q} instead of (4.11) and (4.23) in the proof of (4.33) in~\cite{2016arXiv160408188A}. 
Moreover, the proof of \eqref{eq:f_u_approx_f} follows from \eqref{eq:F_f_u} and $\normsp{\cF}= 1 +\ord(\eta\dens^{-1})$ (cf.~\eqref{eq:normtwo_F}) by straightforward perturbation theory of the simple isolated eigenvalue~$\normsp{\cF}$.

Now we consider the choice $\TO=\TO_t=\Id-\cV_t \cF$. Once \eqref{eq:T 2 norm bound}, and with it \eqref{eq:B_0_resolvent_bound}, is established for $\TO_t$, the statement about the single isolated eigenvalue \eqref{eq:nondegeneracy for CF} follows. 
Indeed, assuming \eqref{eq:B_0_resolvent_bound} for $\TO=\TO_t$, we obtain that $\TO_t$ and, hence, the rank of $\cP_{\TO_t}$ is a 
continuous function of $t$ on $[0,1]$. Hence, the rank of $\cP_{\TO_t}$ is constant along this interpolation. On the other hand, 
$\rank\cP_{\TO_0} = 1$ by Fact \eqref{item:F_simple_eigenvalue} above. Therefore, for each $t \in [0,1]$, $\spec(\TO_t) \cap D_\eps(0)$ consists of precisely one simple eigenvalue. 
We are thus left with establishing \eqref{eq:T 2 norm bound} for $\TO_t$.
As $\normsp{\cV_t} \leq 1$ and $\normsp{\cF} \le 1$ the bound \eqref{eq:T 2 norm bound} is certainly satisfied for $\abs{\omega} \ge 3$. Thus, we now assume $\abs{\omega}\le 3$. 
In order to conclude \eqref{eq:T 2 norm bound}, we now show a lower bound on $\normtwo{( (1-\omega)\Id - \cV_t \cF)[R]}$ for all normalized, $\normtwo{R}=1$, elements $R \in \C^{N\times N}$. 
 We decompose $R$ as $R = \alpha F + R^\perp$, where $R^\perp \perp F$ with respect to the Hilbert-Schmidt scalar product on $\C^{N \times N}$  and $\alpha \in \C$. Then
 \begin{equation} \label{eq:estimate_T_omega} 
\begin{aligned} 
\normtwo{( (1-\omega)\Id-\cV_t \cF  )[R]}^2 = \, \abs{\alpha}^2 \abs{\omega}^2+\normtwo{( (1-\omega)\Id-\cV_t \cF )[R^\perp]}^2 +\ord\big(\eta \dens^{-1}\big)  \,,
\end{aligned}
\end{equation} 
because of $\normsp{\cF}= 1 +\ord(\eta\dens^{-1})$, $\cV_t[F_U] = F_U$ together with \eqref{eq:f_u_approx_f}, and because the mixed terms are negligible due to 
\[
\scalar{F}{\cV_t \cF [R^\perp]}\,=\, \scalar{\cF\cV_t[F]}{R^\perp}\,=\, \ord(\normtwo{R^\perp}\eta\dens^{-1})\,.
\]
Using the spectral gap   $ \vartheta \sim 1 $ of $\cF$ from \eqref{eq:spec_F} and $R^\perp \perp F$ we infer \eqref{eq:T 2 norm bound} from \eqref{eq:estimate_T_omega} by estimating 
\[
\normtwo{( (1-\omega)\Id-\cV_t \cF )[R^\perp]}^2 \,\ge\, \dist(\omega, D_{1-\vartheta}(1))^2\normtwo{R^\perp}^2 \,\ge\,  (\vartheta-2\eps)^2(1-\abs{\alpha}^2), 
\] 
optimizing in $\alpha$ and choosing $\eps \leq \vartheta/3$. 
This shows the lemma for $\TO=\Id-\cV_t \cF$.

Since $\BO_0$ is related by the similarity transform \eqref{eq:def_B_0_E} to $\Id-\cV_1\cF=\Id-\cC_S\cF$ and $\norm{Q}\norm{Q^{-1}}\lesssim 1$ (cf.~\eqref{eq:q_sim_1}), 
the operator $\BO_0$ inherits the properties listed in the lemma from $\Id-\cC_S\cF$. Finally, we can perform analytic perturbation theory for the simple isolated eigenvalue in $D_\eps(0)$ of $\BO_0$ to verify the lemma for $\TO=\BO=\BO_0 + \cE$ with 
$\cE=\ord(\dens)$ (cf.~\eqref{eq:E=ord rho}) and $\TO = \BO_* = \BO_0 + \cE_*$ with $\cE_* = \ord(\dens)$ (cf.~\eqref{eq:E_star_ord_rho}) if $\dens_*$ is sufficiently small. 
This completes the proof of Lemma~\ref{lem:prop_F_small_dens}. 
\end{proof}

In the following corollary, we use the concepts of \emph{left} and \emph{right eigenvector} of an operator 
$\TO \colon \C^{N\times N} \to \C^{N\times N}$. We say $V_l \in \C^{N\times N}$ ($V_r\in \C^{N\times N}$) is a left (right) 
eigenvector of $\TO$ corresponding to the eigenvalue $\lambda \in \C$ of $\TO$ if 
$\TO^*[V_l] = \bar \lambda V_l$ ($\TO[V_r] = \lambda V_r$).

\begin{corollary} \label{coro:eigenvector_expansion}
Let $z \in \Dbdd$ satisfy $\dens(z) + \eta \dens(z)^{-1}\leq \dens_*$ for $\dens_* \sim 1$ from Lemma \ref{lem:prop_F_small_dens}. 

Let $\beta_0$ and $\beta$ be the isolated eigenvalues in $D_\eps(0)$ of $\BO_0$ and $\BO$, respectively, from Lemma~\ref{lem:prop_F_small_dens}. 
Furthermore, let $\cP_0=\cP_{\BO_0}$ and $\cP=\cP_{\BO}$ be the spectral projections corresponding to the isolated eigenvalue of $\BO_0$ and $\BO$, respectively (see \eqref{eq:nondegeneracy for CF}).
Then with $\cQ_0\defeq \Id -\cP_0$ and $\cQ\defeq \Id-\cP$ we have 
\begin{equation} \label{eq:B_inverse_Q_norm}
\norm{\BO^{-1} \cQ}  + \normsp{\BO^{-1}\cQ}  +\norm{\BO_0^{-1} \cQ_0}  \lesssim 1.
\end{equation}
We define $B_0 \defeq \cP_0\cC_{Q^*,Q}[F_U]$ and $P_0 \defeq \cP^*_0\cC_{Q,Q^*}^{-1}[F_U]$. Then $B_0$ and $P_0$ are right and left eigenvector of $\BO_0$ corresponding to $\beta_0$
and we have 
\begin{subequations} 
\begin{align} 
 B_0 & = \cC_{Q^*,Q}[F_U] + \ord(\eta \dens^{-1}), \qquad \qquad P_0 = \cC_{Q,Q^*}^{-1} [F_U] + \ord(\eta\dens^{-1}), \label{eq:b_0_l_0_approx}\\ 
\beta_0 &=\frac{\eta}{\dens} \frac{\pi}{\avg{F_U^2}} +\ord(\eta^2\dens^{-2}) = \ord(\eta\dens^{-1})\,.& \label{eq:beta_0_approx}
\end{align}
\end{subequations}
We also define $B \defeq \cP[B_0]$ and $P \defeq \cP^*[P_0]$. This yields right and left eigenvectors of $\BO$ corresponding to $\beta$ which satisfy  
\begin{subequations}
\label{eq:expansion of beta b l}
\begin{align}
B \,&=\, B_0 + \ord(\dens)\,, \label{eq:expansion_b}
\\
P \,&=\, P_0 + \ord(\dens)\,, \label{eq:expansion_l}
\\
\beta\scalar{P}{B}\,&=\, \pi \eta\dens^{-1} - 2\ii\dens\sigma + \ord( \dens^2 + \eta + \eta^2\dens^{-2})\,. 
\label{eq:expansion_beta_scalar_l_b} 
\end{align}
\end{subequations}
Moreover, we have 
\begin{equation} \label{eq:b_l_bounded} 
\norm{B} \lesssim 1, \qquad \qquad \norm{P} \lesssim 1, \qquad \qquad \abs{\scalar{P}{B}} \sim 1.
\end{equation}
\end{corollary}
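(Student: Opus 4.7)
The plan is to organize the proof in three parts matching the structure of the statement: (A) the operator norm bounds \eqref{eq:B_inverse_Q_norm}, (B) the analysis of the reference operator $\BO_0$ to obtain \eqref{eq:b_0_l_0_approx}--\eqref{eq:beta_0_approx}, and (C) a perturbative transfer from $\BO_0$ to $\BO$ yielding \eqref{eq:expansion of beta b l} and \eqref{eq:b_l_bounded}.

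For (A) I would apply Lemma~\ref{lem:prop_F_small_dens} directly. Since both $\BO$ and $\BO_0$ have a single simple isolated eigenvalue in $D_\eps(0)$ and satisfy uniform resolvent bounds outside $D_\eps(0)\cup D_{1-2\eps}(1)$, writing $\BO^{-1}\cQ = -(2\pi\ii)^{-1}\oint_\Gamma \omega^{-1}(\BO - \omega\Id)^{-1}\di\omega$ along a contour $\Gamma$ enclosing only the \emph{large} part of the spectrum (of length and distance from the spectrum of order $1$) gives the claim in both $\norm{\,\cdot\,}$ and $\normsp{\,\cdot\,}$; the same contour argument handles $\BO_0^{-1}\cQ_0$.

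For (B), the representation $\BO_0 = \cC_{Q^*,Q}(\Id-\cC_S\cF)\cC_{Q^*,Q}^{-1}$ reduces everything to $\Id-\cC_S\cF$, and the exact identity $(\Id-\cF)[F_U] = \eta\dens^{-1}QQ^*$ from \eqref{eq:F_f_u} is the linchpin. Combining it with the two algebraic facts that $S$ and $F_U$ commute as functions of the normal matrix $U$ and that $S^2 = I$ (valid because $\Re U = S + \ord(\dens^2)$ is bounded away from $0$ for small $\dens$) yields
\begin{equation*}
(\Id-\cC_S\cF)[F_U] = F_U - S\cF[F_U]S = \eta\dens^{-1}SQQ^*S = \ord(\eta\dens^{-1}).
\end{equation*}
Thus $F_U$ is an approximate null vector of $\Id-\cC_S\cF$; applying the corresponding spectral projection and conjugating back via $\cC_{Q^*,Q}$ (respectively $\cC_{Q,Q^*}^{-1}$ for the left eigenvector, using the analogous computation for the adjoint $\Id-\cF\cC_S$) gives \eqref{eq:b_0_l_0_approx}. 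Formula \eqref{eq:beta_0_approx} then follows from $\beta_0\scalar{P_0}{B_0} = \scalar{P_0}{\BO_0[B_0]}$ (biorthogonality together with the fact that $B_0$ is an exact eigenvector); the trace identity $\avg{F_UQQ^*} = \pi$, which I would derive from $\pi\dens = \avg{\Im M} = \avg{(\Im U)QQ^*}$ and $\Im U = \dens F_U$, produces the explicit prefactor, while $\scalar{P_0}{B_0} = \avg{F_U^2} + \ord(\eta\dens^{-1})$ supplies the denominator.

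For (C), $\cE = \BO - \BO_0 = \ord(\dens)$ by \eqref{eq:E=ord rho} and the uniform resolvent estimates give $\cP = \cP_0 + \ord(\dens)$ through the standard contour-integral formula, so that \eqref{eq:expansion_b}--\eqref{eq:expansion_l} follow at once from $B = \cP[B_0]$, $P = \cP^*[P_0]$. For the sharp identity \eqref{eq:expansion_beta_scalar_l_b} I would write $\beta\scalar{P}{B} = \scalar{P}{\BO[B]}$ and first use $\cP\BO = \BO\cP$ together with $\cP^*[P] = P$ to reduce this to $\scalar{P}{\BO[B_0]}$, and then further to $\scalar{P_0}{\BO[B_0]}$ modulo $\norm{P - P_0}\norm{\BO[B_0]} = \ord(\dens)\cdot\ord(\eta\dens^{-1}+\dens) = \ord(\eta + \dens^2)$. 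Splitting $\BO[B_0] = \BO_0[B_0] + \cE[B_0]$, the first summand contributes $\eta\pi\dens^{-1}+\ord(\eta^2\dens^{-2})$ from (B). For the second, the crucial expansion is $U = S + \ii\dens F_U + \ord(\dens^2)$, which gives $U^2 = I + 2\ii\dens SF_U + \ord(\dens^2)$ and hence
\begin{equation*}
(\cC_S - \cC_U)[F_U] = F_U(I - U^2) = -2\ii\dens SF_U^2 + \ord(\dens^2);
\end{equation*}
pairing with $P_0 = \cC_{Q,Q^*}^{-1}[F_U] + \ord(\eta\dens^{-1})$ and using cyclicity of the trace extracts precisely $-2\ii\dens\avg{SF_U^3} = -2\ii\dens\sigma$. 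The bounds \eqref{eq:b_l_bounded} are then routine: $\norm{B}, \norm{P} \lesssim 1$ follows from the operator norm bound on $\cP$ via part (A) together with the bounded $B_0, P_0$, and $\abs{\scalar{P}{B}} \sim 1$ because $\avg{F_U^2} \sim 1$ dominates the $\ord(\dens + \eta\dens^{-1})$ corrections in the assumed regime $\dens+\eta\dens^{-1}\le\dens_*$. I expect the main difficulty to lie in the error accounting of step (C), since the naive bound using only $B = B_0 + \ord(\dens)$ and $P = P_0 + \ord(\dens)$ gives an $\ord(\dens)$ error in $\beta\scalar{P}{B}$ that is too weak for \eqref{eq:expansion_beta_scalar_l_b}, and the improvement to $\ord(\dens^2+\eta+\eta^2\dens^{-2})$ relies crucially on the commutation $\BO\cP = \cP\BO$ and the smallness of $\BO[B_0]$ itself.
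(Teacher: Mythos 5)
Your proposal follows essentially the same route as the paper: (A) projection bounds via Lemma~\ref{lem:prop_F_small_dens}, (B) the exact identity from \eqref{eq:F_f_u} conjugated across the balanced polar decomposition, and (C) first‑order perturbation theory for $\cE=\ord(\dens)$, together with the commutation $\cP\BO=\BO\cP$ to push the error down to $\ord(\eta+\dens^2)$ in \eqref{eq:expansion_beta_scalar_l_b}. One small caveat: as literally written, ``$\beta_0\scalar{P_0}{B_0}=\scalar{P_0}{\BO_0[B_0]}$ because $B_0$ is an exact eigenvector'' is a tautology; what actually yields the number is the exact left identity $\BO_0^*\cC_{Q,Q^*}^{-1}[F_U]=\eta\dens^{-1}\id$ paired against the exact eigenvector $B_0$, giving $\beta_0\scalar{\cC_{Q,Q^*}^{-1}[F_U]}{B_0}=\eta\dens^{-1}\avg{B_0}$, after which your identities $\avg{F_UQQ^*}=\pi$ and $\scalar{P_0}{B_0}=\avg{F_U^2}+\ord(\eta\dens^{-1})$ finish the computation exactly as in the paper.
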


The following identity will be used a few times 
\begin{equation} \label{eq:f_u_qq_star}
 \avg{F_UQQ^*} = \dens^{-1}\avg{\Im M}=\pi.
 \end{equation}
It is obtained by a direct computation starting from the definition of $F_U$ in \eqref{eq:def_S_F_U_Sigma}, the balanced polar decomposition, $M=Q^*UQ$, and $\dens(z) = \pi^{-1} \avg{\Im M(z)}$.

\begin{proof}

The bounds in \eqref{eq:B_inverse_Q_norm} are a direct consequence of Lemma~\ref{lem:prop_F_small_dens}.  
Using \eqref{eq:F_f_u} and $\cC_S[F_U] = F_U$, we see that 
\begin{equation} \label{eq:B_0C_q^*q_f_u} 
\BO_0^*\cC_{Q,Q^*}^{-1}[F_U]\,=\, \eta \dens^{-1}\id \,, \qquad \BO_0\cC_{Q^*,Q}[F_U]\,=\, \ord(\eta\dens^{-1})\,. 
\end{equation}
The representations of $B_0$ and $P_0$  in \eqref{eq:b_0_l_0_approx} follow by simple perturbation theory because $\beta_0$ is a nondegenerate isolated eigenvalue.
The expression for $\beta_0$ in \eqref{eq:beta_0_approx} is seen by taking the scalar product with $B_0$ in the first identity of \eqref{eq:B_0C_q^*q_f_u} as well as using \eqref{eq:b_0_l_0_approx} and \eqref{eq:f_u_qq_star}. 
  
 The expansions \eqref{eq:expansion of beta b l} follow by first order analytic perturbation theory. Indeed, $B = B_0 + \ord(\dens)$ and $P=P_0 + \ord(\dens)$ as $\cE = \BO-\BO_0 = \ord(\dens)$ due to \eqref{eq:E=ord rho}. 
For the proof of \eqref{eq:expansion_beta_scalar_l_b}, we first compute $\cE[B_0]$. 
From \eqref{eq:b_0_l_0_approx}, we obtain the first equality below: 
\begin{equation} \label{eq:E_b_0} 
 \cE[B_0]  = \cC_{Q^*,Q} (\cC_S - \cC_U) \cF[F_U] + \ord(\eta) = -2 \ii \dens \cC_{Q^*,Q} [SF_U^2] +  \ord(\dens^2+ \eta), 
\end{equation}
For the second equality in \eqref{eq:E_b_0}, we used \eqref{eq:F_f_u}, $\norm{\cC_S - \cC_U}= \ord(\dens)$ and $ (\cC_S - \cC_U)[F_U] = 2( \Im U -\ii \Re U )(\Im U)F_U = -2 \ii \dens S F_U^2  +\ord(\dens^2)$ 
due to \eqref{eq:Re_u_s}.  
For the proof of \eqref{eq:expansion_beta_scalar_l_b}, we start from 
$\BO[B] = \beta B$, $\BO = \BO_0 + \cE$, use \eqref{eq:expansion_b}, \eqref{eq:expansion_l} as well as $\cE = \ord(\dens)$ and obtain 
\begin{equation} \label{eq:beta_scalar_l_b_lemma}
 \beta \scalar{P}{B} = \beta_0 \scalar{P_0}{B_0} + \scalar{P_0}{\cE[B_0]} + \ord(\dens^2).  
\end{equation}
Together with the following two expansions, this yields \eqref{eq:expansion_beta_scalar_l_b}. 
We have 
\begin{align*}
 \beta_0 \scalar{P_0}{B_0} & = \pi \eta \dens^{-1} + \ord(\eta^2\dens^{-2}), \\ 
\scalar{P_0}{\cE[B_0]} & = - 2\ii \dens\avg{SF_U^3} + \ord(\dens^2 + \eta) = -2 \ii \dens \sigma + \ord(\dens^2 + \eta). 
\end{align*}
The first expansion is a consequence of $\scalar{P_0}{B_0} = \avg{F_U^2} + \ord(\eta \dens^{-1})$ due to \eqref{eq:b_0_l_0_approx} and \eqref{eq:beta_0_approx}.  
The second expansion follows from \eqref{eq:b_0_l_0_approx} and \eqref{eq:E_b_0}.

The first two bounds in \eqref{eq:b_l_bounded} follow directly from \eqref{eq:expansion_b} and \eqref{eq:expansion_l} as well as \eqref{eq:b_0_l_0_approx}, \eqref{eq:q_sim_1} and \eqref{eq:im_u_sim_rho}. 
Moreover, \eqref{eq:b_0_l_0_approx}, \eqref{eq:expansion_b} and \eqref{eq:expansion_l} imply $\abs{\scalar{P}{B}} \sim \avg{F_U^2} \sim 1$ by \eqref{eq:im_u_sim_rho}.
 This completes the proof of Corollary~\ref{coro:eigenvector_expansion}.  
\end{proof}

\begin{proof}[Proof of Proposition~\ref{pro:B_inverse_improved_bound}]
As in the proof of Lemma~\ref{lem:prop_F_small_dens}, it suffices to show the bound on $\normsp{\BO^{-1}}$ in \eqref{eq:B_inverse_improved_bounds}. 

From \eqref{eq:representation_BO}, by using Lemma~\ref{lem:prop_m_q}, we conclude that 
\[ \normsp{\BO^{-1}} \lesssim \normsp{(\cC_U^* - \cF)^{-1}} \lesssim \abs{1- \normsp{\cF} \scalar{F}{C_U^*[F]}}^{-1} \lesssim \Big(1- \normsp{\cF} + \abs{1- \scalar{F}{\cC_U^*[F]}}\Big)^{-1}. \] 
Here, we applied the Rotation-Inversion Lemma, Lemma 4.9 in \cite{2016arXiv160408188A}, with $\TO=\cF$ and $\mathcal{U} = \cC_U^*$ in the second step. Its conditions are met due to Fact \eqref{item:F_simple_eigenvalue} and Fact \eqref{item:F_spectral_gap} about $\cF$ from the proof of Lemma~\ref{lem:prop_F_small_dens}. 

Owing to \eqref{eq:normtwo_F} as well as \eqref{eq:q_sim_1} and \eqref{eq:im_u_sim_rho}, we have $1 - \normsp{\cF} \sim \eta\dens^{-1}$. 
Therefore, it suffices to show that 
\begin{equation} \label{eq:proof_bound_inverse_BO_aux}
 \abs{1- \scalar{F}{\cC_U^*[F]}} \gtrsim \dens(\dens + \abs{\sigma}) 
\end{equation}
when $\eta \dens^{-1}$ is small. 
As $1 \geq \avg{F \Re U F \Re U}$ due to $\normtwo{F} = 1$, we estimate 
\[ \abs{1- \scalar{F}{\cC_U^*[F]}} = \abs{1- \avg{FU^* FU^*}} \gtrsim \avg{F \Im U F \Im U} + \abs{\avg{F\Im U F \Re U}}. \] 
Since $\Im U \sim \dens$ by \eqref{eq:im_u_sim_rho}, the first term on the right-hand side scales like $\sim \dens^2$. This proves \eqref{eq:proof_bound_inverse_BO_aux} 
when $\dens \geq \dens_*$ for any $\dens_* \sim 1$ as $\abs{\sigma} \lesssim 1$. If $\dens_*$ is sufficiently small and $\dens + \eta\dens^{-1} \leq \dens_*$ then we use 
$\avg{F\Im U F \Re U } = \dens\normtwo{F_U}^{-2} \avg{F_U^3S} + \ord(\dens^{3} + \eta)$ by \eqref{eq:f_u_approx_f} and \eqref{eq:Re_u_s}
to conclude \eqref{eq:proof_bound_inverse_BO_aux} and, thus, \eqref{eq:B_inverse_improved_bounds} in the missing regime. 
This completes the proof of Proposition~\ref{pro:B_inverse_improved_bound}. 
\end{proof}

\subsection{Sharp bound on \texorpdfstring{$\BO^{-1}$}{B-1} and \texorpdfstring{$1/2$}{1/2}-Hölder continuity of \texorpdfstring{$M$}{M}} 
\label{subsec:proof_hoelder_continuity} 

In this section, we will prove Theorem~\ref{thm:B_inverse_sharp_edge} and Corollary~\ref{cor:Hoelder_continuity_M}. 
They will be proven directly after the following proposition, the main result of the present section.
It shows that $\sigma$ introduced 
in \eqref{eq:def_S_F_U_Sigma} is of order one close to regular edges $\tau_0\in\partial\supp\varrho$. 
For the formulation of this proposition, we define 
\begin{equation} \label{eq:def_A}
 \cA[R,T] \defeq \frac{1}{2} \Big( M\SS[R]T + T \SS[R] M \Big)  
\end{equation}
with $R, T \in \C^{N\times N}$. 

\begin{proposition} \label{pro:sigma_sim_1}
Let \ref{assumption A}, \ref{assumption flatness} and \ref{assumption bounded M} be satisfied for some $\tau_0 \in \R$. If $\tau_0\in\partial\supp\varrho$ is a regular edge then the following statements hold true
\begin{enumerate}[(i)]
\item At $z = \tau_0$, for $P$ and $B$ defined as in Corollary~\ref{coro:eigenvector_expansion}, we have 
\[ \abs{\scalar{P}{\cA[B,B]}} \sim 1.\] 
\item There is $\delta_* \sim 1$ such that 
\[ \abs{\sigma(z)} \sim 1\] 
for all $z \in \Hb$ satisfying $\abs{z-\tau_0} \leq \delta_*$. 
\end{enumerate}
\end{proposition}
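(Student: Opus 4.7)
The plan is to reduce (i) to the pointwise claim $|\sigma(\tau_0)| \sim 1$, prove this via a shape analysis of the MDE near $\tau_0$ driven by the regular-edge hypothesis \eqref{sqrt growth}, and then extend to the neighbourhood in (ii) by continuity of $\sigma$.

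For the reduction step I would evaluate $\langle P, \cA[B,B]\rangle$ at $z = \tau_0$ using the expansions of Corollary~\ref{coro:eigenvector_expansion}. By Corollary~\ref{cor:hoelder_1_3}, $M$ extends continuously to $\tau_0$; along the approach $z = \tau_0 + \ii\eta \to \tau_0$, the regular-edge assumption gives $\dens \sim \sqrt\eta$, so $\eta\dens^{-1} \to 0$, and the $\ord(\eta\dens^{-1})$ and $\ord(\dens)$ error terms in \eqref{eq:b_0_l_0_approx} and \eqref{eq:expansion of beta b l} vanish. At $\tau_0$ we have $\Im M = 0$, the balanced polar decomposition collapses to $M(\tau_0) = Q^* S Q$ with $U = S$, while $F_U = \dens^{-1}\Im U$ retains a finite nonzero limit of order one, and \eqref{eq:F_f_u} with $\eta = 0$ gives $\cF[F_U] = F_U$ exactly. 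Using the conjugation identity $\SS[Q^* X Q] = Q^{-1}\cF[X](Q^*)^{-1}$, cyclicity of the trace, and the fact that $S$ and $F_U$ commute (as functions of the normal $U$), a direct computation yields
\begin{equation*}
\langle P, \cA[B,B]\rangle\big|_{z=\tau_0} \,=\, \langle S F_U^3\rangle \,=\, \sigma(\tau_0),
\end{equation*}
so (i) is equivalent to $|\sigma(\tau_0)| \sim 1$.

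To establish this I would carry out a shape analysis at $\tau_0$. For small $\omega < 0$ (approaching along the spectrum side) let $\Delta(\omega) \defeq M(\tau_0 + \omega) - M(\tau_0)$ and $\Theta(\omega) \defeq \langle P, \Delta(\omega)\rangle / \langle P, B\rangle$. Since $\BO^*[P] = \bar\beta P$ and $\beta(\tau_0) = 0$ by \eqref{eq:expansion_beta_scalar_l_b}, projecting \eqref{eq:stab_equation_non_symm} onto $P$ annihilates the linear term, giving
\begin{equation*}
0 \,=\, \langle P, M\SS[\Delta]\Delta\rangle + \omega\langle P, M^2\rangle + \omega\langle P, M\Delta\rangle.
\end{equation*}
Writing $\Delta = \Theta B + E$ with $E = \cQ[\Delta] = \BO^{-1}\cQ(M\SS[\Delta]\Delta + \omega M^2 + \omega M\Delta)$, and using $\norm{\BO^{-1}\cQ} \lesssim 1$ from \eqref{eq:B_inverse_Q_norm}, one checks that $E = \ord(|\Theta|^2 + |\omega|)$. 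Substituting produces the approximate quadratic equation
\begin{equation*}
\sigma(\tau_0)\,\Theta^2 + \omega\,\langle P, M^2\rangle \,=\, \ord\bigl(|\Theta|^3 + |\omega||\Theta| + \omega^2\bigr),
\end{equation*}
where the identification of the quadratic coefficient $\langle P, M\SS[B]B\rangle\big|_{\tau_0}$ with $\sigma(\tau_0)$ comes from the reduction step above. Both $\sigma(\tau_0)$ and $\langle P, M^2\rangle$ are real, and $|\langle P, M^2\rangle| \sim 1$ by $\norm{M}, \norm{P} \sim 1$ together with flatness. The regular-edge assumption \eqref{sqrt growth} forces $\Im\Theta(\omega) \sim \dens(\tau_0+\omega) \sim |\omega|^{1/2}$; this scaling is incompatible with $|\sigma(\tau_0)| \ll 1$, since in that regime the equation would either force $\Theta$ to be real (contradicting $\Im\Theta \ne 0$) or give $|\Theta| \sim |\omega|^{1/2}/|\sigma|^{1/2} \gg |\omega|^{1/2}$, producing the characteristic $|\omega|^{1/3}$ cusp rather than a square-root edge. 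Combined with the trivial upper bound $|\sigma| \lesssim 1$ from $\norm{F_U}, \norm{S} \lesssim 1$, this yields $|\sigma(\tau_0)| \sim 1$.

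Finally, for (ii): near $\tau_0$ the eigenvalues of $\Re U(z)$ stay close to $\pm 1$, so $S(z) = \sign\Re U(z)$ is locally constant, while $F_U(z)$ and $U(z)$ depend continuously on $z$ via Corollary~\ref{cor:hoelder_1_3}. Hence $\sigma$ is continuous at $\tau_0$, and $|\sigma(z)| \sim 1$ persists on $\{|z - \tau_0| \le \delta_*\}$ for some $\delta_* \sim 1$. The main obstacle is the shape-analysis step: one must simultaneously control the error $E = \cQ[\Delta]$, verify that $\Im\Theta$ inherits the full square-root singularity from $\Im\Delta$ (which uses that $\langle P\rangle>0$, since $P$ is a positive-weighted average of $F_U$ through $\cC_{Q,Q^*}^{-1}$), and rule out that subleading contributions in the quadratic equation can mimic the $|\omega|^{1/2}$ scaling and spoil the conclusion.
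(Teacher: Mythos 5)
Your reduction step for part~(i) (identifying $\scalar{P}{\cA[B,B]}$ at $z=\tau_0$ with $\sigma(\tau_0)$) matches the paper's Lemma~\ref{lem:coefficient_quadratic_term}, and your continuity argument for part~(ii) is in the right spirit. However, the core step — deriving $\abs{\sigma(\tau_0)}\sim 1$ from the quadratic equation for $\Theta$ — has a circularity gap that your proposal does not resolve.

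The quadratic equation $\sigma\Theta^2+\omega\scalar{P}{M^2}=\ord\bigl(\abs{\Theta}^3+\abs{\omega}\abs{\Theta}+\abs{\omega}^2\bigr)$ that you derive by projecting \eqref{eq:stab_equation_non_symm} onto $P$ is only useful if the error is genuinely subleading relative to the main terms of size $\abs{\omega}$. That requires $\abs{\Theta}\lesssim\abs{\omega}^{1/2}$, which in the paper follows from the $1/2$-Hölder continuity of $M$ (Corollary~\ref{cor:Hoelder_continuity_M}); but that corollary is a \emph{consequence} of Theorem~\ref{thm:B_inverse_sharp_edge}, which in turn relies on $\abs{\sigma}\sim 1$ — exactly what you are trying to prove. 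Without $\abs{\sigma}\sim 1$ one only has the unconditional bound $\normsp{\BO^{-1}}\lesssim\dens^{-2}$ from Proposition~\ref{pro:B_inverse_improved_bound}, hence only $1/3$-Hölder continuity (Corollary~\ref{cor:hoelder_1_3}) and the a priori bound $\abs{\Theta}\lesssim\abs{\omega}^{1/3}$. In that regime $\abs{\Theta}^3\lesssim\abs{\omega}$ is the \emph{same} order as $\omega\scalar{P}{M^2}$, and the hypothetical alternative $\sigma=0$, $\abs{\Theta}\sim\abs{\omega}^{1/3}$ (the cubic cusp) cannot be ruled out from the quadratic relation alone. Your claim that $E=\ord(\abs{\Theta}^2+\abs{\omega})$ is correct as a self-consistent statement, but with $\abs{\Theta}\sim\abs{\omega}^{1/3}$ it only gives $\norm{E}\lesssim\abs{\omega}^{2/3}$ and error $\ord(\abs{\omega})$, which does not separate the square-root and cusp scenarios. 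You flag exactly this issue as ``the main obstacle'' at the end, but you offer no mechanism to overcome it.

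The paper's actual proof sidesteps the quadratic equation entirely. It works on the gap side of $\tau_0$ (where $\dens=0$, $M=M^*$ and $\BO$ is invertible) and differentiates the isolated eigenvalue $\beta$ of $\BO$ using first-order perturbation theory, $\pt_\omega M=\BO^{-1}[M^2]$ and the decomposition into $\cP$ and $\cQ$ components. This yields the one-sided estimate $\abs{\pt_\omega(\beta^2)}\lesssim\abs{\scalar{P}{\cA[B,B]}}+\abs{\beta}$, requiring only the $1/3$-Hölder continuity of $M$ (and hence of $P$, $B$). Integrating gives $\abs{\beta}^2\lesssim\abs{\scalar{P}{\cA[B,B]}}\omega+\omega^2$, while Lemma~\ref{lem:area_small_density} and the square-root hypothesis \eqref{sqrt growth} give the independent lower bound $\abs{\beta}^2\sim\omega$ via the Stieltjes-transform formula $\abs{\beta}\sim(\int_0^\delta\dens(\tau_0-\omega')(\omega'+\omega)^{-2}\di\omega')^{-1}$. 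Combining the two bounds forces $\abs{\scalar{P}{\cA[B,B]}}\gtrsim 1$. The key advantage is that this is a one-sided derivative inequality, not a solved quadratic, so it never needs the sharp $1/2$-Hölder input and the logical order is respected: Proposition~\ref{pro:sigma_sim_1} $\Rightarrow$ Theorem~\ref{thm:B_inverse_sharp_edge} $\Rightarrow$ Corollary~\ref{cor:Hoelder_continuity_M} $\Rightarrow$ Proposition~\ref{pro:quadratic_for_shape_analysis}, rather than the reverse.
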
 

Proposition~\ref{pro:sigma_sim_1} immediately implies Theorem~\ref{thm:B_inverse_sharp_edge} and Corollary~\ref{cor:Hoelder_continuity_M}. 

\begin{proof}[Proof of Theorem~\ref{thm:B_inverse_sharp_edge}]
By Proposition~\ref{pro:sigma_sim_1} (ii), there is $\delta_* \sim 1$ such that $\abs{\sigma(z)} \sim 1$ for all $z \in \Hb$ satisfying $\abs{z- \tau_0} \leq \delta_*$. 
Therefore, Theorem~\ref{thm:B_inverse_sharp_edge} follows directly from Proposition~\ref{pro:B_inverse_improved_bound}. 
\end{proof} 

\begin{proof}[Proof of Corollary~\ref{cor:Hoelder_continuity_M}] 
We proceed exactly as in the proof of Corollary~\ref{cor:hoelder_1_3} but use Theorem~\ref{thm:B_inverse_sharp_edge} instead of \eqref{eq:B_inverse_improved_bounds} 
for all $z \in \Hb$ such that $\abs{z- \tau_0} \leq \delta_*$, where $\delta_*$ is chosen as in Theorem~\ref{thm:B_inverse_sharp_edge}.
\end{proof} 

The proof of Proposition~\ref{pro:sigma_sim_1} requires two auxiliary lemmas whose proofs are postponed until the end of the section. 
Some statements in these lemmas will be stated for more general $\tau_0\in \R$
not only when $\tau_0$ is a regular edge, although we will eventually use them in this case. 

We now choose $\theta = \omega_*/2$ in Corollary \ref{cor:hoelder_1_3} and work on the set $\Dtheta$ in the following.  
Note that $\Dtheta \subset \Dbdd$. 
By Hölder-continuity we can then extend $M$ to $\overline{\Dtheta}$, and we denote the extension by $M$ as well. Moreover, the operators $\BO$ and $\BO_*$ are defined for all $z \in \overline{\Dtheta}$ and the results about $\BO$ and $\BO_*$ in 
Lemma~\ref{lem:prop_F_small_dens} hold true on $\overline{\{z \in \Dtheta \colon \dens(z) + \eta \dens(z)^{-1}\leq \dens_*\}}$, 
where the closure is taken with respect to the Euclidean topology on $\C$. 
Lemma~\ref{lem:area_small_density} below shows that this set contains a neighbourhood 
around any point $\tau_0 \in \pt\supp \dens$.

\begin{lemma} \label{lem:area_small_density} 
Let \ref{assumption A}, \ref{assumption flatness} and \ref{assumption bounded M} hold true for some $\tau_0 \in \R$. 
Then the following holds true:
\begin{enumerate}[(i)]
\item 
There is $\dens_* \sim 1$ such that, for the eigenvalue $\beta_*$ of $\BO_*=\Id-\cC_{M^*,M}\SS$ in $D_\eps(0)$ (cf.~Lemma~\ref{lem:prop_F_small_dens}), we have   
\begin{equation} \label{eq:abs_beta_star_sim_eta_rho}
 \abs{\beta_*} \sim \eta /\dens 
\end{equation}
uniformly for $z \in \Dbdd$ satisfying $\dens(z) + \eta \dens(z)^{-1} \leq \dens_*$. 
\item If $\tau_0 \in \pt\supp\dens$ and $\dens_* \sim 1$ then there is $\delta_* \sim 1$ such that $\dens(z) + \eta \dens(z)^{-1} \leq \dens_*$ for all $z \in \Hb$ satisfying $\abs{z- \tau_0} \leq\delta_*$. 

Moreover, we have 
\begin{equation} \label{eq:lim_eta_dens_inverse}
 \lim_{\eta \downarrow 0} \eta \dens(\tau_0 + \ii \eta)^{-1} = 0. 
\end{equation}

\end{enumerate} 
\end{lemma}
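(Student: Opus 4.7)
The proof rests on the identity
\begin{equation*}
\BO_*[\Im M(z)] \,=\, \eta\, M(z) M(z)^*, \qquad z\in \Hb,
\end{equation*}
which follows by taking the imaginary part of~\eqref{MDE}: from $-M^{-1} = z-A+\SS[M]$ we obtain $M^{-1}(\Im M)M^{-*} = \eta\,\id + \SS[\Im M]$, and multiplying by $M$ on the left and by $M^*$ on the right yields the displayed identity. For part~(i), we pair this identity with the left eigenvector $P_*$ of $\BO_*$ associated to its isolated eigenvalue $\beta_*\in D_\eps(0)$ provided by Lemma~\ref{lem:prop_F_small_dens}, arriving at the scalar relation
\begin{equation*}
 \beta_*\scalar{P_*}{\Im M} \,=\, \eta\scalar{P_*}{MM^*}.
\end{equation*}
First-order perturbation theory from $\BO_0$, entirely parallel to Corollary~\ref{coro:eigenvector_expansion} and exploiting $\cE_* = \ord(\dens)$ from~\eqref{eq:E_star_ord_rho}, gives $P_* = \cC_{Q,Q^*}^{-1}[F_U] + \ord(\dens + \eta\dens^{-1})$. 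Combined with the balanced polar identity $\Im M = \dens\,\cC_{Q^*,Q}[F_U]$, the commutativity $U^*F_U U = F_U$, and $\avg{F_U QQ^*} = \pi$ from~\eqref{eq:f_u_qq_star}, a cyclic trace computation yields $\scalar{P_*}{\Im M} = \dens\avg{F_U^2} + \text{subleading}$ and $\scalar{P_*}{MM^*} = \pi + \text{subleading}$, both leading coefficients being of order one by~\eqref{eq:im_u_sim_rho}. Dividing gives $\abs{\beta_*}\sim\eta\dens^{-1}$.

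For part~(ii), the bound $\dens(z)\leq \dens_*/2$ inside the disk $\abs{z-\tau_0}\leq\delta_*$ is immediate: $\dens(\tau_0)=0$ holds because $\Im M(\tau_0)$ is positive semidefinite with vanishing normalised trace and must therefore vanish, and then the $1/3$-Hölder continuity from Corollary~\ref{cor:hoelder_1_3} gives $\dens(z)\lesssim \abs{z-\tau_0}^{1/3}\leq \delta_*^{1/3}\leq \dens_*/2$ provided $\delta_*\sim 1$ is sufficiently small. For the limit~\eqref{eq:lim_eta_dens_inverse} we use that $\avg{M(z)}$ is the Stieltjes transform of $\dens$ by Proposition~\ref{prop stability reference}\,(ii), which gives
\begin{equation*}
\frac{\dens(\tau_0 + \ii\eta)}{\eta}\,=\,\frac{1}{\pi}\int_\R\frac{\dens(x)\,\di x}{(x-\tau_0)^2+\eta^2}.
\end{equation*}
The integrand is monotone increasing as $\eta\downarrow 0$, so monotone convergence yields the limit $\pi^{-1}\int\dens(x)(x-\tau_0)^{-2}\,\di x$. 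Invoking the lower bound $\dens(x)\gtrsim \abs{x-\tau_0}^{1/3}$ for $x\in\supp\dens$ near any boundary point $\tau_0 \in\pt\supp\dens$, supplied by the classification of singularity types in~\cite{shapepaper}, this integral diverges, establishing~\eqref{eq:lim_eta_dens_inverse}.

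The main obstacle is upgrading this pointwise limit to the uniform bound $\eta\dens(z)^{-1}\leq \dens_*/2$ on the full disk. Our strategy is to distinguish three regimes of $z=\tau+\ii\eta$. When $\abs{\tau-\tau_0}\lesssim\eta$, the Harnack inequality for the positive harmonic function $\dens$ on $\Hb$ gives $\dens(z)\sim\dens(\tau_0+\ii\eta)$, while bounding the Poisson integral below by its contribution from $\supp\dens\cap[\tau_0-\eta,\tau_0+\eta]$ produces $\dens(\tau_0+\ii\eta)\gtrsim\eta^{1/3}$. When $\tau\in\supp\dens$ with $\abs{\tau-\tau_0}\gg\eta$, the lower bound $\dens(\tau)\gtrsim\abs{\tau-\tau_0}^{1/3}$ together with the $1/3$-Hölder continuity yields $\dens(z)\gtrsim\abs{\tau-\tau_0}^{1/3}$. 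Finally, when $\tau\notin\supp\dens$, a direct evaluation of the Poisson integral against $\dens|_{\supp\dens}\gtrsim\abs{\,\cdot\,-\tau_0}^{1/3}$ produces $\dens(z)\gtrsim\eta\abs{\tau-\tau_0}^{-2/3}$. In each regime one obtains $\eta\dens(z)^{-1}\lesssim\abs{z-\tau_0}^{2/3}\leq \delta_*^{2/3}$, which is $\leq \dens_*/2$ provided $\delta_*$ is small enough.
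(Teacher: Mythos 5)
Your proof of part~(i) follows the same route as the paper---take the imaginary part of the MDE and pair it with the left eigenvector $P_*$ of $\BO_*$---and the leading-order computation of the two traces is fine. One small correction: with the paper's convention $\BO_* = \Id - \cC_{M^*,M}\SS$, i.e.\ $\BO_*[R] = R - M^*\SS[R]M$, the identity obtained from the imaginary part of~\eqref{MDE} reads $\BO_*[\Im M] = \eta\, M^*M$ rather than $\eta\, MM^*$; starting from $M^{-1}(\Im M)M^{-*} = \eta\id + \SS[\Im M]$ you should multiply by $M^*$ on the left and $M$ on the right. This is harmless here since both $\scalar{P_*}{M^*M}$ and $\scalar{P_*}{MM^*}$ evaluate to $\pi + \ord(\dens + \eta\dens^{-1})$ by the cyclicity and unitarity argument, but it would matter if the subleading term were needed.

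For part~(ii) your proof departs substantially from the paper's, and in a way that both introduces an error and imports heavier machinery than is needed. The specific error: you claim $\dens(x)\gtrsim\abs{x-\tau_0}^{1/3}$ near any $\tau_0\in\pt\supp\dens$. At a square-root edge $\dens(x)\sim\abs{x-\tau_0}^{1/2}$, and since $\abs{x-\tau_0}^{1/2}<\abs{x-\tau_0}^{1/3}$ for $\abs{x-\tau_0}<1$, the asserted lower bound fails precisely in the case this paper cares about. The universal lower bound obtainable from the shape classification has exponent $1/2$, not $1/3$; with that correction your monotone-convergence divergence argument and the three-regime Harnack/Poisson estimates for the uniform bound can be repaired. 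A deeper issue of approach: your argument rests on the full singularity classification of the self-consistent density from~\cite{shapepaper}, whereas Section~\ref{sec:dens_close_to_edge} of this paper is deliberately constructed to avoid that dependency (the introduction states that only a ``minor technicality'' is imported from there). The paper's own proof is both shorter and self-contained: it observes from part~(i) that $\beta_*(\tau)=0$ whenever $\dens(\tau)>0$; since $\tau_0\in\pt\supp\dens$ there is a sequence $\tau_n\to\tau_0$ with $\dens(\tau_n)>0$, and since $\beta_*$ is the isolated eigenvalue of the $1/3$-H\"older-continuous operator $\BO_*$, it is itself continuous, forcing $\beta_*(\tau_0)=0$. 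Then $\abs{\beta_*}+\dens$ vanishes at $\tau_0$, and its $1/3$-H\"older continuity combined with $\dens + \eta\dens^{-1}\sim\dens + \abs{\beta_*}$ from part~(i) gives the uniform bound and the limit~\eqref{eq:lim_eta_dens_inverse} in one stroke, with no quantitative lower bound on the density required. You should adopt this argument: it is exactly the kind of ``read off the answer from the eigenvalue $\beta_*$'' move that this section is built around.
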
 

\begin{lemma} \label{lem:coefficient_quadratic_term}
Let \ref{assumption A}, \ref{assumption flatness} and \ref{assumption bounded M} be satisfied for some $\tau_0 \in \R$. 
Then there is $\dens_* \sim 1$ such that, uniformly for all $z \in \Dbdd$ satisfying $\dens(z) + \eta\dens(z)^{-1}\leq \dens_*$, we have 
\begin{subequations} 
\begin{align} 
\scalar{P}{\cA[B,B]} & = \sigma +\ord(\dens +  \eta \dens^{-1}), \label{eq:scalar_l_m_Sb_b_b_Sb_m}\\ 
 \scalar{P}{M\SS[B]B} & = \sigma + \ord(\dens + \eta \dens^{-1}). \label{eq:scalar_l_m_Sb_b} 
\end{align} 
\end{subequations} 
\end{lemma}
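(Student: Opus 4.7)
The plan is a direct algebraic calculation using the leading-order expansions of $B$ and $P$ from Corollary~\ref{coro:eigenvector_expansion} together with the balanced polar decomposition $M = Q^*UQ$. I will first use $B = B_0 + \ord(\dens)$ and $P = P_0 + \ord(\dens)$, which by the uniform bounds $\norm{B_0}, \norm{P_0}, \norm{M} \lesssim 1$ and the flatness of $\SS$ reduce the problem to computing $\scalar{P_0}{M\SS[B_0]B_0}$ and $\scalar{P_0}{B_0 \SS[B_0] M}$ up to errors of size $\ord(\dens)$. Next I will substitute $B_0 = \cC_{Q^*,Q}[F_U] + \ord(\eta\dens^{-1})$ and $P_0 = \cC_{Q,Q^*}^{-1}[F_U] + \ord(\eta\dens^{-1})$ from \eqref{eq:b_0_l_0_approx}. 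Noting that $P_0$ is self-adjoint (hence $\scalar{P_0}{X} = \avg{P_0 X}$) and that by the definition $\cF = \cC_{Q,Q^*}\SS\cC_{Q^*,Q}$ together with \eqref{eq:F_f_u} one has $\SS[Q^* F_U Q] = \cC_{Q,Q^*}^{-1}\cF[F_U] = \cC_{Q,Q^*}^{-1}[F_U] + \ord(\eta\dens^{-1})$, everything can be expressed in $U$, $F_U$ and $Q$ up to $\ord(\dens + \eta\dens^{-1})$.

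With these substitutions the computation becomes transparent. Plugging in the explicit formulas and using cyclicity of the trace yields
\begin{equation*}
\scalar{P_0}{M\SS[B_0]B_0} = \avg{Q^{-1} F_U (Q^*)^{-1} \cdot Q^*UQ \cdot Q^{-1} F_U (Q^*)^{-1} \cdot Q^* F_U Q} + \ord(\eta\dens^{-1}) = \avg{UF_U^3} + \ord(\eta\dens^{-1}),
\end{equation*}
where all the $Q$'s telescope and in the last step I use that $F_U = \dens^{-1}\Im U$ and $U$ commute, both being functions of the normal matrix $W$ from \eqref{eq:def_q_u}. The analogous calculation with $M$ on the right produces
\begin{equation*}
\scalar{P_0}{B_0\SS[B_0]M} = \avg{F_U^3 U} + \ord(\eta\dens^{-1}) = \avg{UF_U^3} + \ord(\eta\dens^{-1}),
\end{equation*}
so the two orderings appearing inside the symmetrized expression $\cA[B,B]$ give the same leading order.

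Finally, using $\Re U = S + \ord(\dens^2)$ and $\Im U = \ord(\dens)$ from \eqref{eq:Re_u_s}, together with $\abs{F_U} \lesssim 1$ from \eqref{eq:im_u_sim_rho}, I obtain $\avg{UF_U^3} = \avg{SF_U^3} + \ord(\dens) = \sigma + \ord(\dens)$ by the definition of $\sigma$ in \eqref{eq:def_S_F_U_Sigma}. Both identities \eqref{eq:scalar_l_m_Sb_b} and \eqref{eq:scalar_l_m_Sb_b_b_Sb_m} follow. The argument is entirely algebraic once Corollary~\ref{coro:eigenvector_expansion} and \eqref{eq:F_f_u} are in hand; no genuine obstacle is expected. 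The only care needed is careful bookkeeping of the $\ord(\dens)$ and $\ord(\eta\dens^{-1})$ remainders together with the observation that the two trace orderings inside $\cA$ coincide at leading order thanks to the commutativity of $U$ and $F_U$.
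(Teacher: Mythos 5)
Your proof is correct and takes essentially the same route as the paper's. Both rest on the expansions $B = B_0 + \ord(\dens)$, $P = P_0 + \ord(\dens)$ and $B_0 \approx \cC_{Q^*,Q}[F_U]$, $P_0 \approx \cC_{Q,Q^*}^{-1}[F_U]$ from Corollary~\ref{coro:eigenvector_expansion}, the approximate fixed-point identity $\cF[F_U] = F_U + \ord(\eta\dens^{-1})$ from \eqref{eq:F_f_u}, the replacement $U = S + \ord(\dens)$ from \eqref{eq:Re_u_s}, and commutativity of $U$, $S$, $F_U$. The only organizational difference is that the paper first records the operator identity \eqref{eq:A_x_y_alternative} for $\cA$ and computes $\cA[B_0,B_0] = \cC_{Q^*,Q}[SF_U^2] + \ord(\dens+\eta\dens^{-1})$ as a matrix, whereas you substitute directly into the scalar $\scalar{P_0}{\cdot}$ and telescope the $Q$'s by cyclicity of the trace; these are the same computation. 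One small imprecision: $P_0 \defeq \cP_0^*\cC_{Q,Q^*}^{-1}[F_U]$ is not literally self-adjoint, so the step ``$\scalar{P_0}{X}=\avg{P_0 X}$'' should really be phrased using the fact that its leading-order term $\cC_{Q,Q^*}^{-1}[F_U]=Q^{-1}F_U(Q^*)^{-1}$ is Hermitian (since $F_U$ is), i.e.~$\scalar{P_0}{X} = \avg{\cC_{Q,Q^*}^{-1}[F_U]\,X} + \ord(\eta\dens^{-1}\norm{X})$; the rest of your bookkeeping already uses exactly this expansion, so the argument goes through unchanged.
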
 

We remark that \eqref{eq:scalar_l_m_Sb_b} will be used in the next section. 

\begin{proof}[Proof of Proposition~\ref{pro:sigma_sim_1}] 
In this proof, we will analyse $M$ and $\BO=\Id - \cC_M \SS$ on the real line outside the self-consistent spectrum, i.e.~we will consider spectral parameters $z = \tau + \ii \eta$ such that 
$ \tau \in [\tau_0 -\omega_*/2, \tau_0 + \omega_*/2] \setminus \supp \dens$ and $\eta =0$.  
In particular, $\dens(\tau) = 0$ and thus $M=M^*$ by \eqref{eq:im_M_sim_avg}. 
Owing to the continuity of $M$ (Corollary~\ref{cor:hoelder_1_3}), $M$ satisfies the MDE, \eqref{MDE}, also for these spectral parameter $z$. Moreover, 
$\dens(\tau + \ii \eta) \lesssim \eta/ \dist(\tau + \ii \eta, \supp \dens)^2$ as $\avg{M}$ is the Stieltjes transform of the measure $\mu$ on $\R$ (compare \eqref{eq:Stieltjes_representation}). 
Thus, $\BO$ is invertible at $\tau \notin \supp \dens$ due to Proposition~\ref{pro:B_inverse_improved_bound} as the term $\eta \dens^{-1}$ has a uniform lower bound for $z = \tau + \ii \eta$ with $\eta>0$. 
In particular, $M$ and $\beta$ are differentiable with respect to $\omega = \tau- \tau_0$ for $\tau \notin \supp \dens$. First order perturbation theory of the isolated eigenvalue $\beta$ of the non-selfadjoint operator $\BO$ yields 
\begin{equation} \label{derivative of lambda}
\pt_\omega \beta = -\frac{\scalar{P}{\cC_{\pt_\omega M,M}\SS[B]}}{\scalar{P}{B}}- \frac{\scalar{P}{\cC_{M,\pt_\omega{M}}\SS[B]}}{\scalar{P}{B}}
= - \frac{\scalar{P}{(\pt_\omega M)\SS[B] M + M \SS[B](\pt_\omega M)}}{\scalar{P}{B}}. 
\end{equation}
For definiteness, we assume in the following that $\tau_0$ is a right edge. Hence, $\omega>0$. The argument for a left 
edge works completely analogously. 

Owing to the invertibility of $\BO$, the MDE, \eqref{MDE}, is differentiable at $\tau$ with respect to $\omega$. 
Similarly to \eqref{basic MDE equations}, we obtain  
\[ \pt_\omega M = \BO^{-1}[M^2] = \frac{\scalar{P}{M^2}}{\beta\scalar{P}{B}} B + \BO^{-1}\cQ[M^2]. \] 
In the second step, we inserted $\cP + \cQ = \Id$ and employed the definition of $\cP=\cP_{\BO}$ in 
Corollary~\ref{coro:eigenvector_expansion}. 
We insert this into \eqref{derivative of lambda} and get from Lemma~\ref{lem:prop_F_small_dens} and \eqref{eq:B_inverse_Q_norm} that 
\[  \pt_\omega \beta = -\frac{\scalar{P}{M^2}}{\beta \scalar{P}{B}^2} \scalar{P}{B\SS[B]M + M \SS[B] B} + \ord(1) = \frac{2\scalar{P}{M^2}}{\beta \scalar{P}{B}^2} \scalar{P}{\cA[B,B]} + \ord(1). \] 
The bounds in \eqref{eq:b_l_bounded} of Corollary~\ref{coro:eigenvector_expansion} yield  $\norm{P} \lesssim 1$ and, hence, $\abs{\scalar{P}{M^2}} \lesssim 1$ by Assumption~\ref{assumption bounded M}. 
By \eqref{eq:b_l_bounded}, we have $\abs{\scalar{P}{B}}\sim 1$ if $\eta >0$. 
 Thus, as a consequence of the continuity of $M$ by Corollary~\ref{cor:hoelder_1_3} and, hence, of $P$ and $B$, 
the derivative of $\beta^2$ is bounded by  
$\abs{\pt_\omega (\beta^2) } \lesssim \abs{\scalar{P}{\cA[B,B]}} + \abs{\beta}$.
This implies 
\begin{equation} \label{eq:abs_beta_squared} 
\abs{\beta}^2 \lesssim \abs{\scalar{P}{\cA[B,B]}}\omega + \omega^2. 
\end{equation}
On the other hand, from \eqref{eq:abs_beta_star_sim_eta_rho} and the continuity of $\beta_*$, and $\beta_*=\beta$ for $\eta=0$ (as $M=M^*$) we get 
\[
\abs{\beta(\tau_0+\omega)} \sim \lim_{\eta \downarrow 0} \frac{\eta}{\dens(\tau_0+\omega + \ii \eta)} \sim  \bigg(\int_0^\delta \frac{\dens(\tau_0 -\omega')}{(\omega' +\omega)^2}\di \omega'\bigg)^{-1},
\]
for some $\delta \sim 1$. From this and \eqref{sqrt growth}, we conclude that 
\[
\liminf_{\omega \downarrow 0} \frac{\abs{\beta(\tau_0 + \omega)}}{\sqrt{\omega}}\sim \limsup_{\omega \downarrow 0} \frac{\abs{\beta(\tau_0 + \omega)}}{\sqrt{\omega}}\sim 1\,,
\]
i.e.~$\abs{\beta}^2 \sim \omega$ as $\omega \downarrow 0$. 
Therefore, we find $\abs{\scalar{P}{\cA[B,B]}} \gtrsim 1$ at $z=\tau_0$ due to \eqref{eq:abs_beta_squared}. The upper bound follows from $\norm{P} \lesssim 1$ and $\norm{B} \lesssim 1$ by Corollary~\ref{coro:eigenvector_expansion}. This completes the proof of (i). 

For the proof of (ii), we conclude that $\scalar{P}{\cA[B,B]}$ is a uniformly $1/3$-Hölder continuous function of $z$ on $\{w \in \Hb \cup \R \colon \abs{w - \tau_0} \leq \delta_*\}$ 
for some $\delta_* \sim 1$ due to Corollary~\ref{cor:hoelder_1_3} and Lemma~\ref{lem:area_small_density} (ii). By possibly shrinking $\delta_* \sim 1$, we can thus assume that 
$\abs{\scalar{P}{\cA[B,B]}} \sim 1$ for all $z \in \Hb$ satisfying $\abs{z-\tau_0} \leq \delta_*$. From Lemma~\ref{lem:area_small_density} (ii) and \eqref{eq:scalar_l_m_Sb_b_b_Sb_m}, 
we conclude that $\abs{\sigma(z)} \sim 1$ for all $z \in \Hb$ such that $\abs{z-\tau_0} \leq \delta_*$ for some sufficiently small $\delta_* \sim 1$. 
Hence, we have completed the proof of Proposition~\ref{pro:sigma_sim_1}. 
\end{proof} 

\begin{proof}[Proof of Lemma~\ref{lem:area_small_density}] 
Similarly to the proof of Corollary~\ref{coro:eigenvector_expansion}, we find a left eigenvector $P_*$ of $\BO_*$ corresponding to $\beta_*$, i.e.~
$(\BO_*)^*[P_*] = \overline{\beta_*}P_*$, such that 
\begin{equation} \label{eq:expansion_P_star}
 P_* = Q^{-1}F_U(Q^*)^{-1} + \ord(\dens + \eta\dens^{-1}) 
\end{equation}
provided that $z \in \Dbdd$ satisfies $\dens(z) + \eta \dens(z)^{-1} \leq \dens_*$. 
We take the imaginary part of \eqref{MDE} and compute the scalar product with $P_*$. 
This yields 
\begin{equation} \label{eq:eigenvalue_formula_B_star}
 \beta_* = \frac{\eta}{\dens}\frac{ \scalar{P_*}{M^*M}}{\scalar{P_*}{\dens^{-1}\Im M}}. 
\end{equation}
Using \eqref{eq:expansion_P_star} and the balanced polar decomposition, $M = Q^* U Q$, we obtain
\[ \begin{aligned} 
\scalar{P_*}{M^*M} & = 
\avg{F_U QQ^*} + \ord(\dens + \eta\dens^{-1})  = \pi+ \ord(\dens + \eta \dens^{-1}), \\ 
\scalar{P_*}{\dens^{-1}\Im M} & = \avg{F_U^2} + \ord(\dens + \eta \dens^{-1}).  
\end{aligned}  \] 
Here, we used that $U$ and $F_U$ commute and \eqref{eq:f_u_qq_star} in order to compute $\scalar{P_*}{M^*M}$. 
We thus deduce that $\abs{\scalar{P_*}{M^*M}} \sim 1$ and $\abs{\scalar{P_*}{\dens^{-1} \Im M}} \sim 1$ for all $z \in \Dbdd$ satisfying $\dens(z) + \eta \dens(z)^{-1} \leq \dens_*$ 
for some sufficiently small $\dens_* \sim 1$ due to \eqref{eq:im_u_sim_rho}. 
Therefore, taking the absolute value in \eqref{eq:eigenvalue_formula_B_star} and using these scaling relations complete the proof of \eqref{eq:abs_beta_star_sim_eta_rho}. 

For the proof of (ii), we remark that, owing to the continuity of $\dens$, we have  
\[ \lim_{\eta \downarrow 0} \dens(\tau + \ii \eta)^{-1} \eta = 0 \] 
for all $\tau \in \R$ satisfying $\dens(\tau) >0$. From \eqref{eq:abs_beta_star_sim_eta_rho}, we thus conclude that 
$\beta_*(\tau) = 0$ if $\dens(\tau)>0$ for all $\tau \in [\tau_0-\omega_*/2, \tau_0 + \omega_*/2]$. 
The continuity of $M$ from Corollary~\ref{cor:hoelder_1_3} implies that $\BO_*$ is also $1/3$-Hölder continuous. 
Consequently, $\beta_*$ is also $1/3$-Hölder continuous as it is an isolated eigenvalue of $\BO_*$. 
Owing to the continuity of $\dens$, we find a sequence $(\tau_n)_n$ such that $\tau_n \to \tau_0 \in \pt\supp \dens$ 
and $\dens(\tau_n) >0$ for all $n$. 
Thus, the continuity of $\beta_*$ yields $\beta_*(\tau_0) = 0$. Therefore, we have $\abs{\beta_*} + \dens = 0$ at $z = \tau_0$. Hence, the $1/3$-Hölder continuity of $\abs{\beta_*} + \dens$ implies that there is $\delta_* \sim 1$ 
such that $\dens(z) + \eta \dens(z)^{-1} \leq \dens_*$ since $\dens + \eta \dens^{-1} \sim \dens + \abs{\beta_*}$ by \eqref{eq:abs_beta_star_sim_eta_rho}. 
From $\beta_*(\tau_0) =0$ and \eqref{eq:abs_beta_star_sim_eta_rho}, we directly conclude \eqref{eq:lim_eta_dens_inverse}.  
This completes the proof of Lemma~\ref{lem:area_small_density}.
\end{proof}

\begin{proof}[Proof of Lemma~\ref{lem:coefficient_quadratic_term}]
First, we use the balanced polar decomposition, $M=Q^* U Q$, \eqref{eq:def_F} and the definition of $\cA$ in \eqref{eq:def_A} to obtain 
\begin{equation} \label{eq:A_x_y_alternative}
\cA[R,T] = \frac{1}{2} \cC_{Q^*,Q}\Big[ U (\cF \cC_{Q^*,Q}^{-1}[R])\cC_{Q^*,Q}^{-1} [T]+ \cC_{Q^*,Q}^{-1}[T] (\cF\cC_{Q^*,Q}^{-1}[R])U \Big] 
\end{equation}
for $R, T \in \C^{N\times N}$. 

We choose $\dens_* \sim 1$ small enough such that Lemma \ref{lem:prop_F_small_dens} is applicable. 
By using $U = S + \ord(\dens)$ due to \eqref{eq:Re_u_s} as well as \eqref{eq:def_S_F_U_Sigma}, \eqref{eq:F_f_u} and \eqref{eq:b_0_l_0_approx} 
in \eqref{eq:A_x_y_alternative}, we get
\begin{equation} \label{eq:A_b_0_b_0}
 \cA[B_0,B_0] = \cC_{Q^*,Q}[S F_U^2] + \ord( \dens + \eta\dens^{-1}). 
\end{equation}
In order to show \eqref{eq:scalar_l_m_Sb_b_b_Sb_m}, we use \eqref{eq:expansion_b} as well as \eqref{eq:expansion_l} and obtain 
 \[ \scalar{P}{\cA[B,B]} = \,  \scalar{P_0}{\cA[B_0,B_0]} + \ord(\dens) 
= \,   \avg{SF_U^3} + \ord(\dens+\eta\dens^{-1})  
= \,  \sigma + \ord(\dens+ \eta \dens^{-1}).\] 
This completes the proof of \eqref{eq:scalar_l_m_Sb_b_b_Sb_m}. A similar computation yields \eqref{eq:scalar_l_m_Sb_b}. 
\end{proof} 

\subsection{Derivation of the quadratic equation}
In this section, we expand $M(\tau_0 + \omega)$ around $M(\tau_0)$ for a regular edge $\tau_0 \in \pt\supp\dens$. We show that this approximation is to leading order dominated by a 
scalar-valued quantity, $\Theta$, which satisfies a quadratic equation. That is the content of the following 
proposition which is the main result of this section.

\begin{proposition}[Quadratic equation for shape analysis] \label{pro:quadratic_for_shape_analysis}
Let \ref{assumption A}, \ref{assumption flatness} as well as \ref{assumption bounded M} be satisfied for some regular edge $\tau_0\in\partial\supp\varrho$. 
Then there is $\delta_* \sim 1$ such that the following hold true:
\begin{enumerate}[(a)] 
\item 
For all $\omega\in [-\delta_*, \delta_*]$, we have   
\begin{equation} \label{eq:decomposition_diff_M}
 M(\tau_0 + \omega) -M(\tau_0) = \Theta(\omega) B + R(\omega), 
\end{equation}
where $\Theta \colon [-\delta_*, \delta_* ] \to \C$ and $R \colon [-\delta_*, \delta_*] \to \C^{N\times N}$ are defined by 
\begin{equation} \label{eq:decomposition_m_Theta_r} 
 \Theta(\omega) \defeq \scalarbb{\frac{P}{\scalar{B}{P}}}{M(\tau_0 +\omega) - M(\tau_0)}, \qquad R(\omega) \defeq \cQ[M(\tau_0 +\omega) - M(\tau_0)]. 
\end{equation}
Here, $P=P(\tau_0)$, $B=B(\tau_0)$ and $\cQ=\cQ(\tau_0)$ are the eigenvectors and spectral projection of $\BO(\tau_0)$ introduced in Corollary~\ref{coro:eigenvector_expansion}.
We have $B = B^*$ and $P = P^*$ as well as $B\sim 1$ and $P \sim 1$. 
Moreover, $\Theta(\omega)$ and $R(\omega)$ are bounded by 
\begin{equation}\label{eq:bound_Theta_R} 
 \abs{\Theta(\omega)} \lesssim \abs{\omega}^{1/2},\qquad \Im \Theta(\omega) \geq 0, \qquad \norm{\Im R(\omega)} \lesssim \abs{\omega}^{1/2}\Im \Theta(\omega) 
\end{equation}
uniformly for all $\omega \in [-\delta_*, \delta_*] $. 
\item \label{item:quadratic_equation} The function $\Theta$ satisfies the quadratic equation 
\begin{equation} \label{eq:quadratic_equation}
\sigma \Theta^2(\omega) + \omega \Xi(\omega) = 0, \qquad \qquad \Xi(\omega)  = \pi(1 + \nu(\omega)), 
\end{equation}
for all $\omega \in [-\delta_*, \delta_*]$, where $\sigma = \scalar{P}{M\SS[B]B}$, $M=M(\tau_0)$, and the error term $\nu(\omega)$ satisfies
\begin{equation}\label{eq:bound_nu}
\abs{\nu(\omega)} \lesssim \abs{\omega}^{1/2}, \qquad \abs{\Im \nu(\omega)} \lesssim \Im \Theta(\omega) 
\end{equation}
for all $\omega \in [-\delta_*, \delta_* ]$. 
\end{enumerate}
\end{proposition}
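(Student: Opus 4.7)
The plan is to project equation \eqref{eq:stab_equation_non_symm} for $\Delta \defeq M(\tau_0 + \omega) - M(\tau_0)$ onto the kernel and the stable subspace of $\BO(\tau_0)$. First, I would extend $M$ continuously to $\tau_0$ using Corollary \ref{cor:Hoelder_continuity_M} and observe that $\dens(\tau_0) = 0$ forces $\Im M(\tau_0) = 0$ by \eqref{eq:im_M_sim_avg}, hence $M(\tau_0)$ is self-adjoint and $U(\tau_0) = S$. This makes the eigenvectors $B$ and $P$ from Corollary \ref{coro:eigenvector_expansion} reduce (via \eqref{eq:b_0_l_0_approx}) to $B = Q^* F_U Q$ and $P = Q^{-1} F_U (Q^*)^{-1}$, which are self-adjoint and positive semi-definite with $B \sim 1$, $P \sim 1$. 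Moreover, since $M = M^*$ at $\tau_0$ we have $\BO = \BO_*$, so the continuity of $\beta_*$ together with Lemma \ref{lem:area_small_density} gives $\beta(\tau_0) = 0$, i.e.~$\BO[B] = 0$ and $\BO^*[P] = 0$. The a-priori bound $\norm{\Delta} \lesssim \abs{\omega}^{1/2}$ also follows from the Hölder continuity.

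Then I would split $\Delta = \Theta B + R$ via the spectral projection $\cP$ of $\BO(\tau_0)$ and extract $R$ by applying $\BO^{-1}\cQ$ to \eqref{eq:stab_equation_non_symm}, using $\normsp{\BO^{-1}\cQ} \lesssim 1$ from \eqref{eq:B_inverse_Q_norm}. This yields $R = \omega\,\BO^{-1}\cQ[M^2] + (\text{terms of order }\abs{\omega}^{3/2}\text{ or smaller})$, giving the improved bound $\norm{R} \lesssim \abs{\omega}$. A key observation for the later imaginary-part analysis is that the leading part $\omega\,\BO^{-1}\cQ[M^2]$ is self-adjoint, since $M^2$ is Hermitian and $\BO$ preserves the involution when $M$ is Hermitian; thus it contributes nothing to $\Im R$.

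Next I would apply $\scalar{P}{\cdot}$ to \eqref{eq:stab_equation_non_symm}. The left-hand side vanishes by $\BO^*[P] = 0$, and inserting $\Delta = \Theta B + R$ on the right gives $\sigma \Theta^2 + \omega \scalar{P}{M^2} + (\text{sub-leading}) = 0$, with quadratic coefficient $\sigma = \scalar{P}{M\SS[B]B}$ satisfying $\abs{\sigma} \sim 1$ by Proposition \ref{pro:sigma_sim_1}. To identify the linear coefficient as $\pi$, I would compute $\scalar{P}{M^2}$ at $\tau_0$ using $P = Q^{-1}F_U(Q^*)^{-1}$, $M = Q^* S Q$, cyclicity of the trace, and the commutation $[S, F_U] = 0$ (both functions of $U$) together with $S^2 = \id$; this collapses the expression to $\avg{F_U QQ^*} = \pi$ by \eqref{eq:f_u_qq_star}. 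All remaining contributions to $\Xi$ are of size $\abs{\omega}^{1/2}$ or smaller thanks to $\abs{\Theta} \lesssim \abs{\omega}^{1/2}$ and $\norm{R} \lesssim \abs{\omega}$, yielding $\abs{\nu(\omega)} \lesssim \abs{\omega}^{1/2}$.

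The main obstacle I anticipate is proving the refined imaginary-part bounds $\norm{\Im R} \lesssim \abs{\omega}^{1/2}\Im \Theta$ and $\abs{\Im \nu} \lesssim \Im \Theta$, rather than merely $\norm{\Im R} \lesssim \abs{\omega}$ and $\abs{\Im \nu} \lesssim \abs{\omega}^{1/2}$. The sign $\Im \Theta \geq 0$ is a direct consequence of $\Im \Delta = \Im M(\tau_0+\omega) \geq 0$ combined with the positivity of $P$. For the refined bound on $\Im R$, the crucial point is again that $\omega\,\BO^{-1}\cQ[M^2]$ is self-adjoint, so $\Im R$ originates only from the quadratic correction $\BO^{-1}\cQ[M\SS[\Delta]\Delta + \omega M \Delta]$, whose imaginary part is controlled by $\norm{\Delta}\cdot\norm{\Im \Delta}$. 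Since $\Im \Delta = \Im \Theta \cdot B + \Im R$, this produces a self-improving estimate of the shape $\norm{\Im R} \lesssim \abs{\omega}^{1/2}(\Im \Theta + \norm{\Im R})$, which I would close by absorbing $\norm{\Im R}$ into the left-hand side when $\abs{\omega}$ is sufficiently small. The bound on $\Im \nu$ follows by the same mechanism applied to the projected equation, tracking imaginary contributions term by term.
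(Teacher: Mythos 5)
Your overall strategy follows the paper's proof closely: extend $M$ continuously to $\tau_0$ via Corollary~\ref{cor:Hoelder_continuity_M}, deduce $M(\tau_0)=M(\tau_0)^*$ and $U(\tau_0)=S$ from $\varrho(\tau_0)=0$, identify $B=Q^*F_UQ$ and $P=Q^{-1}F_U(Q^*)^{-1}$ as self-adjoint positive elements with $\beta(\tau_0)=0$, split $\Delta=\Theta B+R$ by the spectral projection of $\BO(\tau_0)$, extract $R$ via $\BO^{-1}\cQ$, project the MDE difference onto $P$ to obtain a quadratic equation, and compute $\scalar{P}{M^2}=\avg{F_UQQ^*}=\pi$ by the same cyclicity argument. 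All of that is sound and exactly what the paper does.

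There is, however, one genuine gap in the refined imaginary‐part estimates, namely $\norm{\Im R}\lesssim\abs{\omega}^{1/2}\Im\Theta$ and $\abs{\Im\nu}\lesssim\Im\Theta$. You work with the \emph{non‐symmetrised} difference equation \eqref{eq:stab_equation_non_symm}, $\BO[\Delta]=M\SS[\Delta]\Delta+\omega M^2+\omega M\Delta$, and claim that only the ``quadratic correction'' $M\SS[\Delta]\Delta+\omega M\Delta$ contributes to $\Im R$, controlled by $\norm{\Delta}\,\norm{\Im\Delta}$. This is not correct: writing $\Delta=\Re\Delta+\ii\Im\Delta$ and using $M=M^*$ you find
\[
\Im(M\Delta)=\tfrac12(M\Im\Delta+\Im\Delta\,M)-\tfrac{\ii}{2}\,[M,\Re\Delta],
\]
and similarly $\Im(M\SS[\Delta]\Delta)$ contains a term built purely from $\Re\Delta$. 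Since $\norm{\Re\Delta}\sim\abs{\omega}^{1/2}$ and nothing forces the commutator to vanish, those unwanted pieces are of order $\abs{\omega}^{3/2}$ respectively $\abs{\omega}$, with \emph{no} factor $\Im\Theta$. Your self-improving inequality then reads $\norm{\Im R}\lesssim\abs{\omega}^{1/2}\Im\Theta+\abs{\omega}^{3/2}+\abs{\omega}^{1/2}\norm{\Im R}$ instead of the needed $\norm{\Im R}\lesssim\abs{\omega}^{1/2}(\Im\Theta+\norm{\Im R})$, and the $\abs{\omega}^{3/2}$ term cannot be absorbed when $\Im\Theta\ll\abs{\omega}$ — which is exactly the regime one needs to resolve (on the gapped side $\Im\Theta=0$). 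The paper avoids this by averaging the left- and right-multiplied MDE differences to obtain the symmetrised form \eqref{eq:difference_mde}, $\BO[\Delta]=\cA[\Delta,\Delta]+\omega M^2+\omega\cK[\Delta]$ with $\cK[\Delta]=\tfrac12(M\Delta+\Delta M)$ and $\cA$ as in \eqref{eq:def_A}. At the self-adjoint point $\tau_0$ these have the crucial property $\Im\cK[\Delta]=\cK[\Im\Delta]$ and $\Im\cA[\Delta,\Delta]=\cA[\Re\Delta,\Im\Delta]+\cA[\Im\Delta,\Re\Delta]$, so every contribution to $\Im R$ carries a factor $\norm{\Im\Delta}$, and the absorption closes as you intended. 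The rest of your argument goes through unchanged once this symmetrisation is adopted.
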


The definition $\sigma = \scalar{P}{M\SS[B]B}$ for $\tau_0 \in \pt \supp \dens$ extends  
the definition of $\sigma$ in \eqref{eq:def_S_F_U_Sigma} on $\Hb$ owing to \eqref{eq:scalar_l_m_Sb_b}, 
\eqref{eq:lim_eta_dens_inverse} as well as  the continuity of $M$ and, thus, $P$, $B$ and $\dens$. 

We warn the reader that, in this section, 
functions of $z$ like $M$, $B$, $P$, $U$, $Q$, etc.~without argument are understood to be evaluated 
at $\tau_0$ instead of the generic spectral parameter $z$ which is the convention in most of the other parts of this work.

\begin{proof} 
The first bound in \eqref{eq:bound_Theta_R} follows directly from Corollary~\ref{cor:Hoelder_continuity_M}. 

From \eqref{eq:b_0_l_0_approx}, \eqref{eq:expansion_b}, \eqref{eq:expansion_l}, $\dens(\tau_0)=0$ and 
\eqref{eq:lim_eta_dens_inverse}, we conclude that $B$ and $P$ are the limits of Hermitian, positive-definite
matrices which are $\sim 1$ due to Lemma~\ref{lem:prop_m_q}. Thus, $B=B^* \sim 1$ and $P=P^* \sim 1$. 
This also implies that $\Im \Theta(\omega) \geq 0$ in \eqref{eq:bound_Theta_R} as $\Im M(\tau_0 + \omega)$ 
is always positive semidefinite and $\Im M(\tau_0) = 0$. 
 
In the following lemma whose proof we postpone till the end of this section 
we establish a quadratic equation for $\Theta$. 

\begin{lemma}[Derivation of the quadratic equation]  \label{lem:derivation_quadratic_equation}
Let $\Theta(\omega)$ and $R(\omega)$ be defined as in \eqref{eq:decomposition_m_Theta_r} and $\cA$ be defined 
as in \eqref{eq:def_A}. Then there is $\delta_* \sim 1$ such that, for all $\omega \in [-\delta_*,\delta_*]$,
 $\Theta=\Theta(\omega)$ satisfies the quadratic equation 
\[ \mu_2 \Theta^2 + \mu_1 \Theta + \mu_0 = e(\omega) \] 
with some error term $ e(\omega) = \ord(\abs{\omega}^{3/2})$ and with coefficients 
\begin{equation}  \label{eq:def_coefficients_quadratic_equation}
\mu_2  = \scalar{P}{\cA[B,B]}, \qquad  \mu_1  = - \beta\scalar{P}{B}, \qquad  \mu_0  = \omega \scalar{P}{M^2}. 
\end{equation}
Moreover, for all $\omega \in [-\delta_*, \delta_*]$, we have 
\begin{equation} \label{eq:bounds_imaginary_part}
 \abs{\Im e(\omega)} \lesssim  \abs{\omega} \Im \Theta(\omega), \qquad\qquad \norm{\Im R(\omega)} \lesssim \abs{\omega}^{1/2}\Im \Theta(\omega). 
\end{equation}
\end{lemma}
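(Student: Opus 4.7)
The plan is to start from the exact identity \eqref{eq:stab_equation_non_symm},
$\BO[\Delta] = M\SS[\Delta]\Delta + \omega M^2 + \omega M\Delta$,
with $\Delta = M(\tau_0+\omega) - M(\tau_0)$ and $\BO, M$ evaluated at $\tau_0$, and to project it onto the two invariant subspaces of $\BO$ determined by $\cP$ and $\cQ$ in order to extract separately an estimate on $R$ and a scalar equation for $\Theta$. The spectral data $P, B, \beta, \cQ$ from Corollary~\ref{coro:eigenvector_expansion} extend continuously to $\tau_0$ thanks to Corollary~\ref{cor:Hoelder_continuity_M} and Lemma~\ref{lem:area_small_density}; at $\tau_0$, since $M = M^*$, the operator $\BO$ preserves Hermiticity, so $\beta\in\R$ and $B, P$ may be chosen Hermitian, while the uniform bound $\normsp{\BO^{-1}\cQ}\lesssim 1$ persists.

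First I would control the remainder $R$. As $\cQ$ is the spectral projection of $\BO$, it commutes with $\BO$; applying $\BO^{-1}\cQ$ to \eqref{eq:stab_equation_non_symm} yields
$R = \BO^{-1}\cQ[M\SS[\Delta]\Delta + \omega M^2 + \omega M\Delta]$,
and combining $\normsp{\BO^{-1}\cQ}\lesssim 1$ with the Hölder bound $\norm{\Delta}\lesssim|\omega|^{1/2}$ from Corollary~\ref{cor:Hoelder_continuity_M} gives the rough estimate $\norm{R}\lesssim|\omega|$.

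Next I would derive the scalar equation by pairing with $P$. Since $\scalar{P}{R}=0$ (because $R\in\Ran\cQ$) and $\beta\in\R$, the LHS equals $\bar\beta\scalar{P}{\Delta}=\beta\Theta\scalar{P}{B}=-\mu_1\Theta$. Substituting $\Delta = \Theta B + R$ in the RHS, the leading contributions are $\mu_0 = \omega\scalar{P}{M^2}$ and $\scalar{P}{M\SS[B]B}\Theta^2$, while the mixed remainders $\omega\Theta\scalar{P}{MB}$, $\Theta\scalar{P}{M\SS[B]R + M\SS[R]B}$, $\omega\scalar{P}{MR}$, and $\scalar{P}{M\SS[R]R}$ are all $\ord(|\omega|^{3/2})$ by the bound on $R$ together with $|\Theta|\lesssim|\omega|^{1/2}$, hence contribute to $e(\omega)$. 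To identify the coefficient of $\Theta^2$ as $\mu_2 = \scalar{P}{\cA[B,B]}$ rather than $\scalar{P}{M\SS[B]B}$, I would invoke that the two agree at $\tau_0$: by the Hermiticity of $P, M, B, \SS[B]$ and cyclicity of the trace, $\scalar{P}{M\SS[B]B} - \scalar{P}{\cA[B,B]} = \ii\,\Im\scalar{P}{M\SS[B]B}$, and by \eqref{eq:scalar_l_m_Sb_b} this imaginary part is $\ord(\dens + \eta\dens^{-1})$, which vanishes at $\tau_0$ via Lemma~\ref{lem:area_small_density}(ii) together with the reality of $\sigma = \avg{SF_U^3}$.

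The main obstacle will be the imaginary-part bounds in \eqref{eq:bounds_imaginary_part}, where the crude estimate $|\Im X|\le|X|$ is insufficient. For $\Im R$ I would exploit that $\BO$ preserves the Hermitian structure at $\tau_0$, so $\Im R = \BO^{-1}\cQ[\Im(M\SS[\Delta]\Delta) + \omega\Im(M\Delta)]$; decomposing $\Im\Delta = (\Im\Theta) B + \Im R$ and noting that each resulting term is linear in $\Im\Theta$ or $\Im R$ with a prefactor of size at most $|\omega|^{1/2}$, a short bootstrap delivers $\norm{\Im R}\lesssim|\omega|^{1/2}\Im\Theta$. Plugging this refined estimate back into the imaginary part of the scalar equation and using $|\Theta|\lesssim|\omega|^{1/2}$, every contribution to $\Im e(\omega)$ gains a factor $\Im\Theta$, producing $|\Im e(\omega)|\lesssim|\omega|\Im\Theta$ as claimed.
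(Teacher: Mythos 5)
Your derivation of the scalar equation is essentially correct: pairing the difference equation with $P$, substituting $\Delta=\Theta B+R$ with $\scalar{P}{R}=0$, extracting the coefficients $\mu_0,\mu_1,\mu_2$, bounding the mixed remainders by $\|R\|\lesssim|\omega|$ and $|\Theta|\lesssim|\omega|^{1/2}$, and observing that at $\tau_0$ the quantity $\scalar{P}{M\SS[B]B}-\scalar{P}{\cA[B,B]}=\ii\,\Im\scalar{P}{M\SS[B]B}$ vanishes via \eqref{eq:scalar_l_m_Sb_b} and Lemma~\ref{lem:area_small_density}(ii), so that $\mu_2$ can be taken to be $\scalar{P}{\cA[B,B]}$. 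The crude bound $\|R\|\lesssim|\omega|$ is also fine.

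The genuine gap is in the imaginary-part estimate for $R$. You work from the \emph{non-symmetrized} identity \eqref{eq:stab_equation_non_symm} and claim that, after applying $\BO^{-1}\cQ$ and taking imaginary parts, ``each resulting term is linear in $\Im\Theta$ or $\Im R$.'' That is false. Writing $\Delta=\Re\Delta+\ii\Im\Delta$ with $M$, $\Re\Delta$, $\Im\Delta$ Hermitian at $\tau_0$, one finds
\begin{equation*}
\Im\bigl(M\SS[\Delta]\Delta\bigr)\,=\,\frac{1}{2\ii}\Bigl(M\SS[\Re\Delta]\Re\Delta-(\Re\Delta)\SS[\Re\Delta]M\Bigr)\,+\,\text{(terms carrying a factor $\Im\Delta$)}\,,
\end{equation*}
and similarly $\omega\Im(M\Delta)$ contains the commutator $\frac{\omega}{2\ii}\bigl(M\Re\Delta-(\Re\Delta)M\bigr)$. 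These commutator-type contributions do \emph{not} carry a factor of $\Im\Delta$; they are generically nonzero and have sizes $\ord(|\omega|)$ and $\ord(|\omega|^{3/2})$, which do not improve to $\ord(|\omega|^{1/2}\Im\Theta)$. In particular, outside the spectrum one has $\Im\Theta=0$ while these terms as written are manifestly nonzero, so your bootstrap cannot close; the cancellations that must occur for the exact solution are invisible to the crude term-by-term estimate. Consequently the bound $\abs{\Im e(\omega)}\lesssim|\omega|\Im\Theta$ is also unproven by this route.

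The remedy — and the step your proposal is missing — is to first \emph{symmetrize} the difference equation into the form \eqref{eq:difference_mde}, $\BO[\Delta]=\cA[\Delta,\Delta]+\omega M^2+\omega\cK[\Delta]$ with $\cA[R,T]=\frac12(M\SS[R]T+T\SS[R]M)$ and $\cK[\Delta]=\frac12(M\Delta+\Delta M)$. Since $M=M^*$ at $\tau_0$, both $\cA$ and $\cK$ commute with conjugation, so $\cA[\Re\Delta,\Re\Delta]$ and $\cK[\Re\Delta]$ are themselves Hermitian and contribute nothing to the imaginary part; every term in $\Im\cA[\Delta,\Delta]$ and $\Im\cK[\Delta]$ then genuinely carries a factor of $\Im\Delta$. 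Only with this symmetrization does your bootstrap for $\|\Im R\|$ and $\abs{\Im e(\omega)}$ go through, and as a bonus the coefficient of $\Theta^2$ comes out directly as $\scalar{P}{\cA[B,B]}$ without the detour through $\scalar{P}{M\SS[B]B}$.
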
 

We now compute the coefficients defined in \eqref{eq:def_coefficients_quadratic_equation} precisely. This will 
yield the quadratic equation in \eqref{eq:quadratic_equation}.  

Owing to \eqref{eq:scalar_l_m_Sb_b_b_Sb_m}, \eqref{eq:scalar_l_m_Sb_b}, \eqref{eq:lim_eta_dens_inverse}, $\dens(\tau_0) =0$ and the continuity of $M$ and, thus, $P$, $B$ and $\dens$,
we have $\mu_2 = \sigma$ as defined in Proposition~\ref{pro:quadratic_for_shape_analysis} \eqref{item:quadratic_equation}.

The expansion in \eqref{eq:expansion_beta_scalar_l_b} implies $\mu_1 = 0$ at $\tau_0$ by \eqref{eq:lim_eta_dens_inverse}. We now compute $\mu_0$. 
At $z \in\Hb$ satisfying $\dens(z) + \dens(z)^{-1} \Im z \leq \dens_*$, we conclude from \eqref{eq:expansion_l}, 
\eqref{eq:b_0_l_0_approx} 
and the balanced polar decomposition, $M=Q^* U Q$, from \eqref{eq:balanced_polar_decomposition} that  
\[ \scalar{P}{M^2} = \scalar{Q^{-1}F_U(Q^*)^{-1}}{Q^* U QQ^* U Q} + \ord(\dens+\eta \dens^{-1}) = 
\avg{F_UQQ^*} +\ord(\dens+ \eta\dens^{-1}) = \pi  + \ord(\dens+ \eta\dens^{-1}). \] 
Here, we also employed that $U = S + \ord(\dens)$ by \eqref{eq:Re_u_s} and $F_U$ and $S$ commute in the 
second step and \eqref{eq:f_u_qq_star} in the last step. 
Thus, we have $\mu_0 = \omega \pi$ at $\tau_0$ by \eqref{eq:lim_eta_dens_inverse}. 

We set $\nu(\omega) \defeq -(\pi \omega)^{-1} e(\omega)$ with $e(\omega)$ as introduced in Lemma~\ref{lem:derivation_quadratic_equation}. 
This immediately implies the first bound in \eqref{eq:bound_nu}. From \eqref{eq:bounds_imaginary_part}, we conclude the second estimate in \eqref{eq:bound_nu} and the third estimate in \eqref{eq:bound_Theta_R}. 
This completes the proof of Proposition~\ref{pro:quadratic_for_shape_analysis}.  
\end{proof}

\begin{proof}[Proof of Lemma~\ref{lem:derivation_quadratic_equation}]
Owing to the Hölder-continuity of $M$, we conclude that $M(z)$ is invertible and satisfies \eqref{MDE} for all $z \in\overline{\Dtheta}$. Hence, evaluating \eqref{MDE} at $z=\tau_0+\omega$ and $z=\tau_0$, 
computing their difference and introducing $M\defeq M(\tau_0)$ as well as $\Delta \defeq M(\tau_0 + \omega) - M$, we obtain 
\begin{equation} \label{eq:difference_mde} 
 \BO[\Delta] = \cA[\Delta, \Delta] + \omega M^2 + \omega \cK[\Delta], \qquad \qquad \cK[\Delta] \defeq \frac{1}{2} (M \Delta + \Delta M ). 
\end{equation}
In order to compute $R = \cQ[\Delta]$, we apply $\BO^{-1}\cQ$ to \eqref{eq:difference_mde}, use 
$\Delta = \Theta B + R$ and, owing to the Hölder-continuity of $M$, $\norm{\Delta(\omega)}
\lesssim \abs{\omega}^{1/2}$ and $\abs{\Theta(\omega)}\lesssim \abs{\omega}^{1/2}$, 
find $\delta_* \sim 1$ such that
\begin{equation} \label{eq:bounds_r}
 \norm{R(\omega)} \lesssim \abs{\omega}, \qquad \qquad \norm{\Im R(\omega)} \lesssim \abs{\omega}^{1/2} \Im \Theta(\omega)  
\end{equation}
for all $\omega \in [-\delta_*, \delta_*]$. Here, in order to estimate $\Im R$, we used that $M=M^*$ and, hence, $\cA[B,B] = \cA[B,B]^*$ as $\tau_0\in \pt\supp\dens$. 
This shows the second estimate in \eqref{eq:bounds_imaginary_part}. 

We apply $\scalar{P}{\genarg}$ to \eqref{eq:difference_mde} and use the decomposition $\Delta = \Theta B + R$ as well 
as $\BO[B] = \beta B$ which yield 
\[ \Theta \beta \scalar{P}{B}  = \omega \scalar{P}{M^2} + \Theta^2 \scalar{P}{\cA[B,B]} + e, \qquad 
 e  \defeq \scalar{P}{ \Theta ( \cA[B,R] + \cA[R,B]) + \cA[R,R]} + \omega \scalar{P}{\cK[\Delta]}.  \] 
From \eqref{eq:bounds_r}, we conclude 
\[ \abs{e(\omega)} \lesssim \abs{\omega}^{3/2}, \qquad \abs{\Im e(\omega)} \lesssim \abs{\omega} \Im \Theta(\omega).  \] 
This establishes the quadratic equation as well as the missing bounds on $e$ 
and, thus, completes the proof of Lemma~\ref{lem:derivation_quadratic_equation}. 
\end{proof}

\subsection{Shape analysis} 
 \label{subsec:proof_dens_close_to_regular_edge_below} 

In this section, we conclude Theorem~\ref{thm:dens_close_to_regular_edge} from Proposition~\ref{pro:quadratic_for_shape_analysis}. 

\begin{proof}[Proof of Theorem~\ref{thm:dens_close_to_regular_edge}]
We recall that $\sigma = \mu_2 = \scalar{P}{M\SS[B] B + B \SS[B]M}/2$ as in the proof of Proposition~\ref{pro:quadratic_for_shape_analysis}  and $\abs{\sigma} \sim 1$ by Proposition~\ref{pro:sigma_sim_1} (i).
We will show that there is $\delta_* \sim 1$ such that 
\begin{equation} \label{eq:dens_around_tau_0_precise} 
\dens(\tau_0 + \omega) = \begin{cases} 
\displaystyle \frac{\pi^{1/2}}{\abs{\sigma}^{1/2}} \abs{\omega}^{1/2} + \ord(\abs{\omega}), \quad & \text{if } \sign \omega = \sign \sigma, \\  
0, \qquad  & \text{if }\sign \omega = -\sign \sigma, 
\end{cases} 
\end{equation} 
for all $\omega \in [-\delta_*, \delta_*]$. This directly implies Theorem~\ref{thm:dens_close_to_regular_edge} with $c = \sqrt{\pi/\abs{\sigma}}$ as we conclude $\sigma <0$ from \eqref{sqrt growth} 
and \eqref{eq:dens_around_tau_0_precise}.  

We now compute $\Theta(\omega)$ in \eqref{eq:decomposition_diff_M} by identifying the correct solution of \eqref{eq:quadratic_equation}. 
The general quadratic equation $\Omega(\zeta)^2 + \zeta  = 0$ with $\zeta \in \C$ has two solutions:
\[ \Omega_\pm(\zeta) = \pm \begin{cases} \ii \zeta^{1/2}, & \text{if } \Re \zeta \geq 0, \\ 
 -(-\zeta)^{1/2}, & \text{if } \Re \zeta < 0, \end{cases} 
\] 
where $\zeta^{1/2}$ denotes the standard branch of the square root with the branch cut $(-\infty,0)$.  

Since $\Theta(\omega)$ is a continuous function of $\omega$ and $\abs{\nu(\omega)} < 1$ for all 
$\omega \in [-\delta_*, \delta_*]$ for $\delta_* \sim 1$ sufficiently small due to the first bound in \eqref{eq:bound_nu}, 
we conclude from \eqref{eq:quadratic_equation} that there are $p, q \in \{ +,-\}$ such that 
\begin{equation} \label{eq:Theta_omega_computed}
 \Theta(\omega) = \Omega_{p} (\Lambda(\omega))\charfunc(\omega/\sigma <0) + \Omega_q (\Lambda(\omega)) \charfunc (\omega/\sigma \geq 0), \qquad \qquad 
\Lambda(\omega) \defeq \frac{\pi \omega}{\sigma}(1  + \nu(\omega)) 
\end{equation}
for all $\omega \in [-\delta_*, \delta_*]$. 

We now show that $q=+$ by a proof by contradiction. We assume $q=-$. For $\omega/\sigma \geq 0$, we have 
\[ \Im \Omega_-(\Lambda(\omega)) = - \bigg(\frac{\pi \omega}{\sigma}\bigg)^{1/2}  + \ord\Big( \abs{\nu(\omega)}\abs{\omega}^{1/2}\Big). \]
For sufficiently small $\omega$ we thus obtain $\Im \Omega_-(\Lambda(\omega)) < 0$ in contradicition to $\Im \Theta(\omega) \geq 0$ from \eqref{eq:bound_Theta_R}. This implies $q = +$.

Next, we prove that $\Im \Theta(\omega) = 0$ for all $\omega \in I_{\delta_*}$ with $\delta_* \sim 1$ sufficiently small, where $I_{\delta_*} \defeq \{ \omega \in \R\colon \sign \omega = - \sign \sigma$, ~ $\abs{\omega} \leq \delta_*\}$. 
We will not determine $p$ in \eqref{eq:Theta_omega_computed} but rather show that $\Im \Theta = 0$ on $I_{\delta_*}$ 
for either choice of $p$ (In fact, $p = +$ can be shown \cite[Proposition~7.10~(ii)]{shapepaper}). 
By possibly shrinking $\delta_* \sim 1$, we get 
\[ \abs{\Re \Omega_\pm(\Lambda(\omega))} \sim \abs{\omega}^{1/2}  \] 
as $\sigma \in \R$ and $\abs{\sigma} \sim 1$. 
Therefore, taking the imaginary part of \eqref{eq:quadratic_equation} and using the second bound in \eqref{eq:bound_nu}, \eqref{eq:Theta_omega_computed} and $\sigma \in \R$ yield 
\[ \abs{\omega}^{1/2} \Im \Theta(\omega) \lesssim \abs{\omega} \Im \Theta(\omega)  \] 
for all $\omega \in I_{\delta_*}$. 
If $\delta_* \sim 1$ is sufficiently small then we obtain $\Im \Theta(\omega) = 0$
for all $\omega \in I_{\delta_*}$. 

We now take the imaginary part of \eqref{eq:decomposition_diff_M} and apply $\avg{\genarg}$. Hence, we obtain 
\begin{equation} \label{eq:expansion_dens_around_tau_0}
 \dens(\tau_0 + \omega) = \Im \Theta(\omega) \pi^{-1} \avg{B} + \pi^{-1} \avg{\Im R(\omega)}
 = \Im \Theta(\omega) + \ord\Big( \abs{\omega}^{1/2} \Im \Theta(\omega)\Big) 
\end{equation}
for all $\omega \in [-\delta_*, \delta_*]$. Here, we used $B=B^*$ in the first step and 
$\avg{B} = \pi$ by \eqref{eq:expansion_b}, \eqref{eq:b_0_l_0_approx}, \eqref{eq:f_u_qq_star} and \eqref{eq:lim_eta_dens_inverse} as well 
as the third bound in \eqref{eq:bound_Theta_R} in the second step. 

Since $q=+$ in \eqref{eq:Theta_omega_computed}, we can bound $\Im \Theta(\omega) = \Im \Omega_+(\Lambda(\omega))$ 
directly in \eqref{eq:expansion_dens_around_tau_0} to obtain the first case in \eqref{eq:dens_around_tau_0_precise}.
Since $\Im \Theta(\omega)=0$ for all $\omega \in I_{\delta_*}$, 
\eqref{eq:expansion_dens_around_tau_0} implies the second case in \eqref{eq:dens_around_tau_0_precise}. 
This completes the proof of \eqref{eq:dens_around_tau_0_precise} and, thus, the one of Theorem~\ref{thm:dens_close_to_regular_edge}.  
\end{proof} 

\subsection{Proof of Proposition~\ref{prop stability reference}} \label{subsec:proof_of_pro_stab_mde} 

We have now established all results which are necessary for the proof of Proposition~\ref{prop stability reference}. 

\begin{proof}[Proof of Proposition~\ref{prop stability reference}]
Claims \eqref{prop31 unique} and \eqref{prop31 dens} follow directly from \cite{MR2376207} and \cite{2016arXiv160408188A}. 

Part \eqref{prop sqrt edge} is a direct consequence of Theorem~\ref{thm:dens_close_to_regular_edge} and the Stieltjes transform representation of $\avg{M(z)}$, i.e.~
\begin{equation} \label{eq:Stieltjes_representation}
 \avg{M(z)} = \int_\R \frac{\dens(\tau)}{\tau - z}\, \di \tau 
\end{equation}
for $z \in \Hb$  (this simple calculation can be found, e.g.~in Corollary~A.1 in \cite{2015arXiv150605095A}). 
  
For the proof of \eqref{1-CMS bound with flatness}, we first remark that 
\eqref{prop sqrt edge} implies $\dens(z) + \eta \dens(z)^{-1} \sim \sqrt{\abs{\tau- \tau_0} + \eta}$ for all $z \in\Hb$ satisfying $\abs{z-\tau_0} \leq \delta_*$. 
Thus, Theorem~\ref{thm:B_inverse_sharp_edge} yields the first bound in \eqref{1-CMS bound with flatness}. 
Owing to \eqref{eq:B_inverse_Q_norm}, we have $\normsp{\BO^{-1}\cQ} \lesssim 1$. 
Moreover, we choose $P$ and $B$ as in Corollary~\ref{coro:eigenvector_expansion}. This completes the proof of the second bound in \eqref{1-CMS bound with flatness} due to \eqref{eq:b_l_bounded}. 

Moreover, $\abs{\sigma} \sim 1$ by Proposition~\ref{pro:sigma_sim_1}. 
Hence, we conclude $\abs{\scalar{P}{M\SS[B]B}} \sim 1$ from Lemma~\ref{lem:coefficient_quadratic_term}. 
Furthermore, owing to \eqref{eq:b_l_bounded}, we have $\abs{\scalar{P}{B}} \sim 1$. 
Thus, since $\sigma \in \R$ and $\abs{\sigma} \sim 1$ we get from \eqref{eq:expansion_beta_scalar_l_b} that 
\[ \abs{\beta} \sim \abs{\beta\scalar{P}{B}} \sim \varrho + \eta \varrho^{-1} \sim \sqrt{\abs{\tau - \tau_0} + \eta} . \] 
This completes the proof of Proposition~\ref{prop stability reference}. 
\end{proof}

\section{Band rigidity}  \label{sec:band_rigidity} 

Within this section we establish band rigidity for correlated random matrices $H$.  This topological rigidity phenomenon asserts that the number of eigenvalues of $H$ within a spectral band, i.e.~a connected component of $\supp \varrho$, does not fluctuate and is accurately predicted by the self-consistent density of states with high probability. On the level of the MDE this phenomenon is reflected by the \emph{band mass formula} \eqref{band mass formula} below, guaranteeing that $N\varrho$ assigns only integer values to each band. In particular, small continuous deformations of the data $(A,\SS)$ of the MDE cannot change these values. 
\begin{proposition}[Band mass formula] 
\label{prp:Band mass formula}
For $\tau \in \R\setminus \supp \varrho$ the integrated self-consistent density of states satisfies
\begin{equation}\label{band mass formula}
\int_{-\infty}^\tau \varrho(x)\dd x = \frac{1}{N} \abs{\mathrm{Spec}(M(\tau))\cap (-\infty,0)}.
\end{equation}
In particular, $N \int_{-\infty}^\tau \varrho(x)\dd x$ is an integer.
\end{proposition}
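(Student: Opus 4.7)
The plan is to show, for any $\tau\in\R\setminus\supp\dens$, that $N\int_{-\infty}^\tau\dens(x)\,\di x$ coincides with the a priori integer $N_-(\tau)\defeq \abs{\Spec M(\tau)\cap(-\infty,0)}$. Both quantities are locally constant on $\R\setminus\supp\dens$ — the integrated density because $\dens=0$ there, and $N_-$ by continuity of the spectrum together with the invertibility of $M(\tau)$ on this set — and both vanish as $\tau\to-\infty$ since $M(\tau)\sim-\tau^{-1}I$ is eventually positive definite. Hence it suffices to verify, across each connected component $[a,b]$ of $\supp\dens$, that $N\int_a^b\dens=N_-(b+)-N_-(a-)$. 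I would prove this via a $\log\det M$ computation.

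The crux is a scalar identity coming from the MDE. Using \eqref{basic MDE equations}, i.e.~$\BO[\partial_z M]=M^2$, I would compute $M^{-1}\partial_z M = M+\SS[\partial_z M]M$, substitute $\SS[M]=A-zI-M^{-1}$ (a rearrangement of the MDE), take the trace, and invoke the self-adjointness of $\SS$ in the Hilbert–Schmidt pairing to obtain
\[
 2\,\partial_z\log\det M(z)\,=\,\Tr M(z)+\Tr\bigl((A-zI)\partial_z M(z)\bigr),\qquad z\in\Hb.
\]
Taking imaginary parts at $z=\tau+\ii 0^+$ (Hermiticity of $A-\tau I$ lets me commute $\Im$ past the trace) and applying $\partial_\tau$ to $\Tr\bigl((A-\tau I)\Im M\bigr)$ to eliminate the derivative of $\Im M$, this rearranges into
\[
 \Im\partial_\tau\log\det M(\tau+\ii 0)\,=\,\Tr\Im M(\tau+\ii 0)+\tfrac12\,\partial_\tau\Tr\bigl((A-\tau I)\Im M(\tau+\ii 0)\bigr).
\]

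Integrating this identity over a band $[a,b]$, the boundary contribution $\tfrac12[\Tr((A-\tau I)\Im M(\tau+\ii 0))]_a^b$ vanishes: continuity of $\dens$ on $\R$ forces $\dens(a)=\dens(b)=0$, hence $\Tr\Im M(a+\ii 0)=\Tr\Im M(b+\ii 0)=0$, and positive semidefiniteness $\Im M\geq 0$ then upgrades this to $\Im M=0$ at both endpoints. Using $\Tr\Im M=\pi N\dens$ inside the band, the identity reduces to
\[
 \int_a^b \Im\partial_\tau\log\det M(\tau+\ii 0)\,\di\tau\,=\,\pi N\int_a^b\dens(\tau)\,\di\tau.
\]

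Finally I would fix the single-valued branch of $\log\det M$ on the simply-connected domain $\Hb$ by tracking each eigenvalue $\mu_j(z)$ via the principal argument in $(0,\pi)$, which is admissible because $\Im M(z)>0$ forces $\mu_j(z)\in\Hb$ for $z\in\Hb$. For $\tau\in\R\setminus\supp\dens$ the boundary limits give $\arg\mu_j(\tau+\ii 0)\in\{0,\pi\}$ according to the sign of $\mu_j(\tau)$, whence $\Im\log\det M(\tau+\ii 0)=\pi N_-(\tau)$. Summing the band identities above from $-\infty$ up to $\tau$ then produces $\pi N_-(\tau)=\pi N\int_{-\infty}^\tau\dens$, which is \eqref{band mass formula}; integrality of $N\int_{-\infty}^\tau\dens$ is automatic because $N_-(\tau)\in\{0,\dots,N\}$. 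The main technical point is the vanishing of $\Im M$ at arbitrary band endpoints, not only regular ones; this reduces via semidefinite positivity to pointwise continuity of $\dens$ on $\R$, which is already available from the general MDE analysis in \cite{2016arXiv160408188A}, so no new shape input is needed.
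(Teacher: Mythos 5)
Your proof is correct and takes a genuinely different route from the paper's. The paper runs a \emph{homotopy argument}: it introduces the flow $\mathrm{MDE}_t$ of \eqref{Ht flow} with $M_t(\tau) \equiv M(\tau)$ fixed, notes that at $t=1$ the solution $M_1(z) = (\tau + M(\tau)^{-1} - z)^{-1}$ is the resolvent of a self-adjoint matrix (so \eqref{band mass formula} is trivial there), and then proves $\int_{-\infty}^\tau\varrho_t$ is $t$-independent by differentiating the contour integral $-\oint\braket{M_t(z)}\,\dd z/2\pi\ii$ and recognizing the integrand as a total $z$-derivative. You instead prove the scalar MDE identity $2\partial_z\log\det M = \Tr M + \Tr\bigl((A - zI)\partial_z M\bigr)$ directly from the equation (your derivation is sound, using $\SS[M] = A - z - M^{-1}$ and cyclicity of the trace) and convert it to an argument-principle count of the negative eigenvalues of $M(\tau)$. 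Both proofs close; the paper's has the virtue of never needing boundary traces of $M$ inside the spectrum, since its contour crosses $\R$ only at $\tau$ and far to the left, where $M$ is analytic, while yours is a one-shot computation needing no auxiliary flow. One cosmetic improvement: instead of decomposing $(-\infty,\tau]$ into bands $[a,b]$ and arguing that $\Im M = 0$ at each (possibly singular) band endpoint, integrate your identity from $\tau_1$ to $\tau_2$ between two gap points at fixed $\eta>0$ and then send $\eta\downarrow 0$. The boundary term $\tfrac12\Tr\bigl((A-\tau_i I)\Im M(\tau_i + \ii\eta)\bigr)$ vanishes because $\tau_i\in\R\setminus\supp\varrho$, where $M$ extends analytically with self-adjoint limit, and $\int_{\tau_1}^{\tau_2}\Tr\Im M(\tau+\ii\eta)\,\dd\tau \to \pi N\int_{\tau_1}^{\tau_2}\varrho$ since $\varrho$ is a continuous density. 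This avoids altogether the existence of $M(a+\ii 0)$ at singular edges and removes any implicit reliance on the boundedness Assumption~\ref{assumption bounded M} away from $\tau_0$, which the paper does not grant globally.
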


Before we prove Proposition~\ref{prp:Band mass formula} we show how it is used to establish band rigidity for $H$.
\begin{proof}[Proof of Corollary \ref{cor rigidity}]
We begin with the proof of \eqref{det num of evs} and consider a flow that interpolates between $H=H_0$ and a deterministic matrix $H_1$. We fix $\tau\not\in\supp\varrho$ with $\epsilon\defeq \dist(\tau,\supp\varrho)>0$ and set
\begin{equation}\label{Ht flow}
H_t \defeq \sqrt{1-t}\mspace{2mu} W+A_t, \quad A_t \defeq A-t\mspace{2mu}\SS[M(\tau)], \quad \SS_t\defeq (1-t)\SS, \quad t\in[0,1].
\end{equation}
The MDE corresponding to $H_t$ is 
\begin{equation}\label{MDEt}
\id+(z-A_t+\SS_t[M_t(z)])M_t(z)=0
\end{equation}
with data $(A_t,\SS_t)$, solution $M_t(z)$ and self-consistent density of states $\varrho_t$. We refer to this $t$-dependent MDE as $\mathrm{MDE}_t$. It is designed in such a way that $M_t(\tau)$ at the fixed spectral parameter $z=\tau$ is kept constant at $t$ varies. Moreover, by the following lemma, whose proof we postpone, $\tau$ stays away from the self-consistent spectrum along the flow.
\begin{lemma} 
\label{lmm:E away from spectrum} Let $\epsilon\defeq \dist(\tau,\supp\varrho)>0$ and $M_t$ be the solution to $\mathrm{MDE}_t$ \eqref{MDEt}.
Then $\dist(\tau,\supp \varrho_t) \ge_\epsilon 1$ and $\lim_{\eta \downarrow 0}M_t(\tau+\mathrm{i}\eta)=M(\tau)$ for all $t \in [0,1]$.
\end{lemma}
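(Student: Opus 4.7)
My plan begins with the algebraic observation that the flow \eqref{Ht flow} is designed precisely so that $M(\tau) \defeq \lim_{\eta\downarrow 0} M(\tau+\ii\eta)$ -- which is well-defined and self-adjoint because $\tau \notin \supp\varrho$, by the Stieltjes representation \eqref{eq:Stieltjes_representation} and the flatness bound $\Im M \sim \avg{\Im M}$ from Lemma~\ref{lem:prop_m_q} -- is a $t$-independent solution of $\mathrm{MDE}_t$ at $z=\tau$. Indeed, in $\id + (\tau - A_t + \SS_t[M(\tau)])M(\tau)$, the two $t$-linear contributions coming from $A_t = A - t\SS[M(\tau)]$ and $\SS_t = (1-t)\SS$ exactly cancel, so $\mathrm{MDE}_t$ at $\tau$ reduces to the original MDE at $\tau$, which $M(\tau)$ satisfies by construction. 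Hence $M(\tau)$ is the natural candidate for $\lim_{\eta\downarrow 0} M_t(\tau+\ii\eta)$.

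To promote this candidate to the two statements of the lemma, I would run a quantitative stability argument in the style of Section~\ref{sec proof local law}. Setting $\Delta_\eta \defeq M_t(\tau+\ii\eta) - M(\tau)$ and subtracting $\mathrm{MDE}_t$ at $(\tau+\ii\eta,\, M_t(\tau+\ii\eta))$ from $\mathrm{MDE}_t$ at $(\tau,\, M(\tau))$ produces
\[
\BO^{(t)}_\tau[\Delta_\eta] \,=\, \ii\eta\, M(\tau)\, M_t(\tau+\ii\eta) \,+\, M(\tau)\,\SS_t[\Delta_\eta]\,\Delta_\eta, \qquad \BO^{(t)}_\tau \defeq \Id - (1-t)\cC_{M(\tau)}\SS.
\]
Assuming $\normsp{(\BO^{(t)}_\tau)^{-1}} \lesssim 1$ uniformly in $t \in [0,1]$ with an implicit constant depending only on $\epsilon$ and the model parameters, the standard quadratic-stability argument would give $\norm{\Delta_\eta} \lesssim \eta$ for all small $\eta > 0$, proving $\lim_{\eta\downarrow 0} M_t(\tau+\ii\eta) = M(\tau)$. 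Running the same estimate with $z = \tau + \omega$ for $\omega$ in a complex disk of radius $r \sim 1$ (with $r$ depending only on $\epsilon$) would produce a holomorphic extension of $M_t$ to that disk; by uniqueness of the $\mathrm{MDE}_t$ solution on $\Hb$ this extension coincides with $M_t$ and is self-adjoint on $(\tau-r, \tau+r)$, forcing $\varrho_t \equiv 0$ on that interval and hence $\dist(\tau, \supp\varrho_t) \ge r$.

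The substantive step -- and the main obstacle I anticipate -- is the uniform invertibility of $\BO^{(t)}_\tau$ for $t \in [0,1]$. The endpoints are direct: at $t = 1$ the operator is simply $\Id$, while at $t = 0$ one may apply Proposition~\ref{pro:B_inverse_improved_bound} at $z = \tau + \ii\eta$ and pass to the limit $\eta \downarrow 0$, exploiting $\varrho(\tau+\ii\eta) \lesssim \eta/\epsilon^2$ from \eqref{eq:Stieltjes_representation} (so that $\eta\,\varrho(\tau+\ii\eta)^{-1}$ is bounded below depending on $\epsilon$) to obtain $\normsp{(\BO^{(0)}_\tau)^{-1}} \lesssim 1$. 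Interpolation in $t$ then reduces to verifying that $\cC_{M(\tau)}\SS$ carries no spectrum in $[1,\infty)$. The hard part will be extending the Perron-Frobenius analysis of the Hermitian operator $\cF$ from Section~\ref{sec:dens_close_to_edge} to this degenerate real-axis regime where $\Im M$ vanishes: my intended route is to conjugate $\cC_{M(\tau)}\SS$ by $\abs{M(\tau)}^{1/2}$ together with the sign of $M(\tau)$, reducing the spectral problem to a self-adjoint one controlled by a limiting variant of $\cF$. The strict bound $\normsp{\cF} < 1$ available away from the spectrum would then, via a compactness argument on $[0,1]$, upgrade to uniform $\normsp{(\BO^{(t)}_\tau)^{-1}} \lesssim 1$, closing the argument.
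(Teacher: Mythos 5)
Your overall strategy matches the paper's: both proofs start from the algebraic observation that $M(\tau)$ is a $t$-independent solution of $\mathrm{MDE}_t$ at $z=\tau$, set up a quadratic stability equation for the difference with stability operator $\BO_t = \Id - (1-t)\cC_{M(\tau)}\SS$, establish a uniform-in-$t$ bound on $\BO_t^{-1}$, and use the resulting analytic extension across the real axis to conclude that $\varrho_t$ vanishes near $\tau$. That said, your sketch has two gaps at exactly the places the paper is most careful.

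First, the bound $\norm{\BO_t^{-1}} \le_\epsilon 1$ uniformly in $t$. Your interpolation correctly reduces this to showing that the limiting Perron--Frobenius operator $\cF$ at the real spectral parameter $\tau$ satisfies $\normsp{\cF} < 1$ with an $\epsilon$-dependent gap, and your proposed conjugation by $\abs{M(\tau)}^{1/2}$ and $\sign M(\tau)$ is indeed the degenerate limit of the paper's balanced polar decomposition $M = Q^*UQ$. However, you flag this as ``the main obstacle'' and propose only a vague compactness argument; compactness in $t$ (or in $\tau$) does not by itself produce the quantitative gap $1-\normsp{\cF} \gtrsim_\epsilon 1$. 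The paper does not derive this either — it imports it directly as \cite[Lemma 3.6]{1706.08343}. So the crux of the lemma is, in the paper, a citation, and your sketch leaves it unresolved.

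Second, identifying the analytic extension with $M_t$ and concluding self-adjointness on $\R$. You invoke ``uniqueness of the $\mathrm{MDE}_t$ solution on $\Hb$,'' but uniqueness only applies among solutions with positive definite imaginary part, and the analytic extension constructed via the implicit function theorem a priori has no such sign. The paper handles this by (a) writing the difference equation in a \emph{symmetrised} form, so that the quadratic term is $\cA[\Delta,\Delta]$ and the map $R \mapsto \cA[R,R]$ preserves self-adjoint matrices (your unsymmetrised version $\ii\eta\,M(\tau)M_t + M(\tau)\SS_t[\Delta]\Delta$ does not manifestly do so), and (b) using that $\BO_t^{-1}$ preserves the cone of positive definite matrices, which forces $\Im(M + \Delta(\zeta)) > 0$ for $\zeta \in \Hb$ small, and hence identifies $M + \Delta(\zeta)$ with $M_t(\tau+\zeta)$. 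Without the cone-preservation step, the identification — and therefore the conclusion that $\varrho_t \equiv 0$ near $\tau$ — does not follow.
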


We will now show that along the flow, with overwhelming probability, no eigenvalue crosses the spectral parameter $\tau$. More precisely we claim that
\begin{equation}\label{No evs cross E}
\P\Big(\tau\in\Spec H_t \text{ for some $t\in[0,1]$}\Big)\le_\epsilon N^{-D}
\end{equation}
for any $D>0$. Since $H_0=H$ and $H_1=A-\SS[M(\tau)]$, \eqref{No evs cross E} implies that with overwhelming probability
\begin{equation*}
\abs{\Spec H \cap (-\infty,\tau)}=\abs{\Spec(A-\SS[M(\tau)]-\tau) \cap (-\infty,0)}=N \braket{\bm 1_{(-\infty,0)}(M(\tau))},
\end{equation*}
where the last identity used the the MDE \eqref{MDE} at $z=\tau$.
 Now \eqref{det num of evs} follows from the band mass formula \eqref{band mass formula}, i.e.~from $\braket{\bm 1_{(-\infty,0)}(M(\tau))} = \int_{-\infty}^\tau \varrho(\lambda) \diff\lambda$. 
 
It remains to show \eqref{No evs cross E}. We first consider the regime of values $t$ close to $1$. Since $\tau$ is separated away from $\supp\varrho$, and $M(\tau)$ is bounded we conclude from \eqref{MDE} at $z=\tau$ that the spectrum of $A-\SS[M(\tau)]$ is also separated away from $\tau$. 
Moreover, applying \cite[Corollary 2.3]{2017arXiv170510661E} to $H=W$ yields $\norm{W} \leq C$ with overwhelming probability as the corresponding self-consistent density of states has compact support by Proposition \ref{prop stability reference}\eqref{prop31 dens}.  
Since therefore $H_t$ is a small perturbation of $A-\SS[M(\tau)]$ as long as $t$ is close to $1$, we conclude that the spectrum of $H_t$  is bounded away from $\tau$ as well for every fixed $t \ge 1-c$ for some small enough constant $c>0$. We are thus left with the regime $t \le 1-c$, where the flatness condition from Assumption \ref{assumption flatness} for $H_t$ is satisfied. In this regime we use  \cite[Corollary 2.3]{2017arXiv170510661E}  again.  Since $\dist(\tau, \supp \varrho_t) \ge_\epsilon  1$ this corollary implies that the spectrum of $H_t$ is bounded away from $\tau$ with overwhelming probability for every fixed $t \le 1-c$. Applying a discrete union bound in $t$ together with the Lipschitz continuity of the eigenvalues in $t$ for the flow \eqref{Ht flow} on the set $\norm W\le C$ yields \eqref{No evs cross E}.

Finally, \eqref{eq rigidity} follows from the optimal local law as in the proof of  Theorem \ref{theorem local law} and Corollary \ref{cor no eigenvalues outside} above. This time, however, \eqref{det num of evs} ensures that there is no mismatch between location and label of eigenvalues close to internal edges.  In the spectral bulk this potential discrepancy between label and location does not matter as \eqref{eq rigidity} allows for an $N^\epsilon$-uncertainty. At the spectral edge, however, neighbouring eigenvalues can lie on opposite sides of a spectral gap and we need \eqref{det num of evs} to make sure that each eigenvalue has, with high probability, a definite location with respect to the spectral gap. 
\end{proof}

\begin{proof}[Proof of Lemma~\ref{lmm:E away from spectrum}]
Note that $M(z)$ is analytic and bounded away from the self-consistent spectrum because it admits a Stieltjes transform representation (cf.~Proposition~2.1 of \cite{2016arXiv160408188A}). 
We consider $\mathrm{MDE}_t$ \eqref{MDEt} at a spectral parameter $\tau+\zeta$ with some $\zeta \in \mathbb{H}$ such that $\abs{\zeta}\ll1$ and subtract it from $\mathrm{MDE}_t$ at spectral parameter $\tau$. Properly symmetrised the resulting quadratic equation for $\Delta=\Delta(\zeta)=M_t(\tau+\zeta)-M(\tau)$ takes the form
\begin{equation}\label{E stability equation}
\mathcal{B}_t[\Delta] = \zeta M^2 +\frac{\zeta}{2}(M\Delta+\Delta M) +(1-t)\mathcal{A}[\Delta,\Delta],
\end{equation} 
where $M=M(\tau)$, $\mathcal{A}$ is as in \eqref{eq:def_A} and $\mathcal{B}_t=\Id - (1-t)\cC_M \SS$ is the stability operator.
We will see that equation \eqref{E stability equation} is linearly stable in the sense that $\norm{\mathcal{B}_t^{-1}}\le_\epsilon 1$ uniformly in $t$. Note that the terms containing $\Delta$ on the right hand side are lower order. Thus we may apply the implicit function theorem to show that $\Delta(\zeta)$ is an analytic function for sufficiently small $\zeta$ with $\Delta(\zeta) = \zeta \mspace{2mu}\mathcal{B}_t^{-1}[M^2] + \landauO{\abs{\zeta}^2}$. In particular, it extends to small $\zeta \in \C$.  Since $M=M(\tau)$ is self-adjoint and $\mathcal{B}_t^{-1}$ preserves the cone of positive definite matrices, $M + \Delta(\zeta)$ coincides for any small $\zeta \in \mathbb{H}$ with the unique solution to $\mathrm{MDE}_t$ with positive definite imaginary part. But since $\Delta(\zeta)$ is analytic in $\zeta$ for any small enough $\zeta$, even with negative imaginary part, 
 $M_t(z)$ can be analytically extended to a $t$-independent neighbourhood of $\tau$ in $\C$. Furthermore, since $\mathcal{B}_t$ and $R \mapsto \mathcal{A}[R,R]$ preserve the space of self-adjoint matrices, this extension takes self-adjoint values on the real line. Thus for every $t$ the density $\varrho_t = \frac{1}{\pi}\braket{ \Im M_t}$ vanishes in a neighbourhood  of $\tau$, i.e.~$\dist(\tau,\supp \varrho_t) \ge_\epsilon 1$. 

To show the bound on $\mathcal{B}_t^{-1}$
we use the symmetrisation \eqref{eq:representation_BO} with the self energy operator $\SS_t = (1-t)\SS$ to see that
\begin{equation}\label{bound on cal B t}
\norm{\mathcal{B}_t^{-1}}_{\mathrm{sp}}\le_\epsilon \norm{(\mathcal{C}_U^*-\mathcal{F}_t)^{-1}}_{\mathrm{sp}}\lesssim  \frac{1}{1-(1-t)\norm{\mathcal{F}}_{\mathrm{sp}}},
\end{equation}
where $U$ is unitary and $\mathcal{F}_t=(1-t)\mathcal{F}$ with the self-adjoint operator $\mathcal{F}$ from \eqref{eq:def_F}. 
Exactly as in the proof of Lemma~\ref{lem:prop_F_small_dens} the boundedness of $\mathcal{B}_t^{-1}$ in the $\norm{\cdot}_{\mathrm{sp}}$-norm also implies $\norm{\mathcal{B}_t^{-1}}\le_\epsilon 1$. Thus it remains to show that the right hand side of \eqref{bound on cal B t} is bounded. 
 For this purpose we apply the lower bound on $1-\norm{\mathcal{F}}_{\mathrm{sp}} \ge_\epsilon 1$ from \cite[Lemma 3.6]{1706.08343}, finishing the proof of the lemma.
\end{proof}

\begin{proof}[Proof of Proposition \ref{prp:Band mass formula}] Let $\epsilon\defeq \dist(\tau, \supp \varrho) > 0$.  Again we make use of $\mathrm{MDE}_t$ \eqref{MDEt}. Recall that $M(\tau)$ solves $\mathrm{MDE}_t$ at spectral parameter $\tau$, which stays away from the self-consistent spectrum by Lemma~\ref{lmm:E away from spectrum}. 
 
Since $M_t(z)$ is the Stieltjes transform of a matrix valued measure on $\supp \varrho_t$ it can be analytically extended to $\C\setminus \supp \varrho_t$, a set that contains the spectral parameter $\tau$ for which $M_t(\tau)=M(\tau)$ by the lemma.
When $\varrho$ and $M(\tau)$ are replaced by $\varrho_t$ and $M_t(\tau)$, respectively, in \eqref{band mass formula} then clearly this identity holds at time $t=1$
since $M_1(z)=(A-\SS[M(\tau)]-z)^{-1}=(\tau+M(\tau)^{-1}-z)^{-1}$ is the resolvent of the self-adjoint matrix $\tau+M(\tau)^{-1}$. 
As $M_t(\tau) = M(\tau)$, it  suffices to establish that the left-hand side of 
\eqref{band mass formula} with $\varrho$ replaced by $\varrho_t$ does not change along the flow.

To show that the left hand side is independent of $t$, we differentiate the contour integral representation 
\[
\int_{-\infty}^\tau \varrho_t(x)\dd x = -\oint \frac{\mathrm{d}z}{2\pi \mathrm{i}}\braket{M_t(z)},
\] 
where the contour encircles $[\min \supp \varrho_t,\tau)$ counterclockwise, passing through the real line only at $\tau$ and to the left of $\inf_t \min \supp \varrho_t$. With $M_t=M_t(z)$ we find
\[
\frac{\dd}{\dd t}\oint \braket{M_t}\dd z= \oint  \braket{(\mathcal{C}_{M^*_t}^{-1}-\SS_t)^{-1}[\id],\mathcal{S}[M(\tau)-M_t]}
=\oint \partial_z \Big(\braket{M_t\SS[M(\tau)]} -\frac{1}{2}\braket{M_t\SS[M_t]}\Big) \dd z =0,
\]
where the formula  $(\mathcal{C}_{M_t}^{-1}-\SS_t)[\partial_t M_t] = \SS[M(\tau)-M_t]$, used in the first identity, is obtained by differentiating  $\mathrm{MDE}_t$  with data \eqref{Ht flow} with respect to $t$ and the formula  $(\mathcal{C}_{M_t}^{-1}-\SS_t)[\partial_zM_t] = \id$, used in the second identity, follows from differentiating  \eqref{MDEt} with respect to $z$.
\end{proof}

\section{Proof of Universality}\label{sec proof edge univ}
In order to prove Theorem \ref{thm edge univ}, we define the Ornstein Uhlenbeck (OU) process starting from $H=H_0$ by
\begin{equation}\label{ou}
\diff H_t = - \frac{1}{2}(H_t-A)\diff t+\Sigma^{1/2}[\diff B_t], \qquad \Sigma[R]\defeq \E W\Tr (W R),
\end{equation}
where $B_t$ is a matrix of, up to symmetry, independent (real or complex, depending on the symmetry class of $H$) Brownian motions and $\Sigma^{1/2}$ is the  square root of the positive definite operator $\Sigma\colon \C^{N \times N} \to \C^{N \times N} $. We note that the same process has already been used in \cite{MR3629874,2016arXiv160408188A,2017arXiv170510661E} to prove bulk universality. The proof now has two steps: Firstly, we will prove edge universality for $H_t$ if $t\gg N^{-1/3}$ and then we will prove that for $t\ll N^{-1/6}$, the eigenvalues of $H_t$ have the same $k$-point correlation functions as those of $H=H_0$. 

\subsection{Dyson Brownian Motion}
The process \eqref{ou} can be integrated, and we have 
\begin{equation*}
H_t-A = e^{-t/2}(H_0-A) + \int_0^t e^{(s-t)/2}\Sigma^{1/2}[\diff B_s],\qquad \int_0^t e^{(s-t)/2}\Sigma^{1/2}[\diff B_s]\sim\NN(0,(1-e^{-t})\Sigma).
\end{equation*}
The process is designed in such a way that it preserves expectation $\E H_t=A$ and covariances $\Cov{h_{ab}^t,h_{cd}^t}=\Cov{h_{ab},h_{cd}}$ along the flow. Due to the fullness Assumption \ref{assumption fullness} there exists a constant $c>0$ such that $(1-e^{-t})\Sigma-c t \Sigma^{\GUE/\GOE}\ge 0$ for $t\le 1$, where $\Sigma^{\GOE/\GUE}$ denotes the covariance operator of the GOE/GUE ensembles. It follows that we can write 
\begin{equation*}
H_t = \widetilde H_t + \sqrt{ct} U, \qquad \kappa_t = \kappa - c t \kappa^{\GOE/\GUE},\qquad \E \widetilde H_t=A, \qquad U \sim \text{GOE/GUE},
\end{equation*}
where $\kappa_t$ here denotes the cumulants of $\wt H_t$, and $U$ is chosen to be independent of $\widetilde H_t$. Due to the  fact that Gaussian cumulants of degree more than $2$ vanish, it is easy to check that $H_t,\widetilde H_t$ satisfy the assumptions of Theorem \ref{theorem local law} uniformly in, say, $t\le N^{-1/10}$. 
From now on we fix $t=N^{-1/3+\epsilon}$ with some small $\epsilon>0$. 

Since the MDE is purely determined by the first two moments of the corresponding random matrix, it follows that $G_t\defeq( H_t-z)^{-1}$ is close to the same $M$ in the sense of a local law for all $t$. For $\widetilde G_t\defeq (\widetilde H_t-z)^{-1}$ we have the MDE
\begin{equation}\label{MDE t}
\id+(z-A+\SS_t[M_t])M_t =0, \qquad \SS_t\defeq\SS- ct \SS^{\GOE/\GUE}
\end{equation}
that can be viewed as a perturbation of the original MDE with $t=0$.
 The corresponding self-consistent density of states is $\varrho_t(\tau)\defeq \lim_{\eta\searrow 0} \Im \braket{M_t(\tau+\ii\eta)}/\pi$. 
The fact that  $M_t$ remains bounded uniformly in $t\le N^{-1/10}$ follows from a similar (but much simpler) argument as those leading to the local law in Section~\ref{sec proof local law}. 
The analogue of \eqref{G-M D eq} with $G$ replaced by $M_t(z)$ is obtained by subtracting \eqref{MDE} from \eqref{MDE t} and the analogue of the error term $D$ is trivially controlled by $t$.  
The details are presented in the MDE perturbation result in \cite[Proposition 10.1]{shapepaper} with $S=\SS$, $S_t = \SS_t$ and $a_t = A$ as 
the condition on $S_t$ in \cite[Eq.~(10.1)]{shapepaper} is obviously satisfied for this choice of $S_t$
due to $\norm{\SS^{\GOE/\GUE}[R]} \lesssim \braket{R}$ for all positive semidefinite matrices $R$.
   In particular the shape analysis from Section \ref{sec:dens_close_to_edge} also applies to $M_t$. 

The Stieltjes transforms of the free convolutions of the empirical spectral density of $\widetilde H_t$ and $\varrho_t$ with the semicircular distribution generated by $\sqrt{ct} U$ are given implicitly as the unique solutions to the equations
\begin{equation*}
\widetilde m_{\fc}^t(z) = \braket{\widetilde G_t(z+ct \widetilde m_{\fc}^t(z))}, \qquad m_{\fc}^t(z)=\braket{M_t(z+ct m_{\fc}^t(z))}.
\end{equation*}
We denote the corresponding right-edges close to $\tau_0$ by $\widetilde \tau_t$ and $\tau_t$. By differentiating the defining equations for $m_\fc^t$ and $\wt m_\fc^t$ we find
\begin{subequations}
 \begin{equation}\label{mfc diff}
 \frac{(m_\fc^t)'(z)}{1+ct (m_\fc^t)'(z)} = \braket{M_t'(\xi_t(z))}, \quad \frac{(\wt m_\fc^t)'(z)}{1+ct (\wt m_\fc^t)'(z)} = \braket{\wt G_t'(\wt\xi_t(z))}, \quad \frac{(m_\fc^t)''(z)}{(1+ct (m_\fc^t)'(z))^3} = \braket{M_t''(\xi_t(z))},
 \end{equation}
where $\xi_t(z)\defeq z+ct m_\fc^t(z)$ and $\widetilde\xi_t(z)\defeq z+ct\widetilde m_\fc^t(z)$. From the first two equalities in \eqref{mfc diff} we conclude
\begin{equation}
1=ct \braket{M_t'(\xi_t(\tau_t))},\qquad 1 = ct \braket{\wt G_t'(\wt\xi_t(\wt \tau_t))},\label{edge impl eq}
\end{equation}
\end{subequations}
by considering the $z\to \tau_t$  and $z\to \wt \tau_t$ limits and that $(m^t_\fc)'$, $(\wt m^t_\fc)'$ blow up at the edge due to the well known square root behaviour of the density along the semicircular flow. We now compare the edge location and edge slope of the densities $\varrho_\fc^t$ and $\wt\varrho_\fc^t$ corresponding to $m_\fc^t$ and $\wt m_\fc^t$ with that of $M$. Very similar estimates for deformed Wigner ensembles have been used in \cite{2017arXiv171203936H}. We split the analysis into four claims.

\subsubsection*{Claim 1} $\abs{\tau_t-\tau_0}\lesssim t/N$.
Using that $\SS^\GUE[R]=\braket{R}$, $\SS^\GOE[R]=\braket{R}+R^t/N$ and \eqref{MDE t} evaluated at $\xi_t(z)$, we find using the boundedness of $M_t$,
\begin{equation*}
\id+(z-A+\SS[M_t(\xi_t(z))])M_t(\xi_t(z)) = ct\Big(\SS^{\GOE/\GUE}[M_t(\xi_t(z))]-\braket{M_t(\xi_t(z))}\Big)M_t(\xi_t(z)) =\landauO{\frac{t}{N}}. 
\end{equation*}
It thus follows that $M_t(\xi_t(z))$ approximately satisfies the MDE for $M$ at $z$. By using the first bound in Proposition \ref{prop stability reference}\eqref{1-CMS bound with flatness} expressing the stability
of the MDE against small additive perturbations it follows that
\begin{equation}\label{mfc M diff}
\abs{m_\fc^t(z)-\braket{M(z)}}=\abs{\braket{M_t(\xi_t(z))-M(z)}} \lesssim \frac{t}{N\sqrt{\eta+\dist(\Re z,\partial\supp\varrho)}}\le \frac{t}{N\sqrt{\dist(\Re z,\partial \supp\varrho)}}.
\end{equation}
Suppose first that $\tau_0=\tau_t+\delta$ for some positive $\delta>0$. Then $\sqrt{\delta}\lesssim\Im\braket{M(\tau_t+\delta/2)}\lesssim t/N\sqrt{\delta}$, where the first bound follows from the square root behaviour of $\varrho$ at the edge $\tau_0$, while the second bound comes from \eqref{mfc M diff} at $z=\tau_t+\delta/2$ and $\Im m_\fc^t(\tau_t+\delta/2)=0$. We thus conclude $\delta\lesssim t/N$. If on the contrary $\tau_0=\tau_t-\delta$ for some $\delta>0$, then with a similar argument $\sqrt{\delta}\lesssim\Im m_\fc^t(\tau_0+\delta/2)\lesssim t/N$ and we have $\delta\lesssim t/N$ also in this case and the claim follows.

\subsubsection*{Claim 2} $\abs{\gamma_t- \gamma}\lesssim (t/N)^{1/4}$, where $\gamma=\gamma_\mathrm{edge}$ from Definition \ref{def regular edge}. From the third equality in \eqref{mfc diff} we can relate the edge-slope of $m_\fc^t$ to $M_t''$. Indeed, if $\gamma_t^{3/2}$ denotes the slope, i.e.~$\varrho_\fc^t(x)=\gamma_t^{3/2}\sqrt{(\tau_t-x)_+}/\pi+\landauo{\tau_t-x}$, then using the elementary integrals
\begin{equation*}
\lim_{\eta\to0}\eta^{1/2}\int_0^\infty \frac{\sqrt{x}/\pi}{(x-\ii\eta)^2}\diff x = \frac{\ii^{1/2}}{2}, \qquad \lim_{\eta\to0}\eta^{3/2}\int_0^\infty \frac{\sqrt{x}/\pi}{(x-\ii\eta)^3}\diff x = \frac{\ii^{3/2}}{8}
\end{equation*}
we obtain the precise divergence asymptotics of the derivatives $(m_\fc^t)'(z)$ and $(m_\fc^t)''(z)$ as $z=\tau_t+\ii\eta\to \tau_t$ and conclude 
\begin{equation*}
\frac{2}{\gamma_t^3}=\lim_{z\to \tau_t} \frac{(ct)^3 (m_\fc^t)''(z)}{(1+ct(m_\fc^t)'(z))^3} = (ct)^3 \braket{M_t''(\xi_t(\tau_t))}, \quad\text{i.e,}\quad \gamma_t = \frac{\big(\braket{M_t''(\xi_t(\tau_t))}/2\big)^{-1/3}}{ct}.
\end{equation*}
We now use \eqref{mfc M diff} at, say, $z=x\defeq \tau_0-\sqrt{t/N}$. By Claim 1 we have $\tau_t-x\sim \sqrt{t/N}$ and thus 
\begin{equation*}
\gamma_t^{3/2} = \frac{\Im m_\fc^t(x)}{\sqrt{\tau_t-x}} + \landauO{(t/N)^{1/4}} = \frac{\Im\braket{M(x)}}{\sqrt{\tau_t-x}} + \landauO{(t/N)^{1/4}} = \frac{\Im\braket{M(x)}}{\sqrt{\tau_0-x}} + \landauO{(t/N)^{1/4}}=\gamma^{3/2} +\landauO{(t/N)^{1/4}},
\end{equation*}
where we used Claim 1 again in the third equality. This completes the proof of the claim.

\subsubsection*{Claim 3} $\abs[0]{\wt \tau_t-\tau_t}\prec 1/Nt$. Since $M_t$ has a square root edge at some $\widehat \tau_t$, it follows from the first equality in \eqref{edge impl eq} that $\xi_t(\tau_t)-\widehat \tau_t \sim t^2$. Using rigidity in the form of Corollary \ref{cor rigidity} for the matrix $\wt H_t$ to estimate $\wt G'_t$ from below at a spectral parameter outside of the support, we have the bound
\begin{equation*}
ct=\abs[1]{\braket{\wt G_t'(\wt\xi_t(\wt \tau_t))}}^{-1}\prec \abs[1]{\wt\xi_t(\wt \tau_t)-\widehat \tau_t}^{1/2}. 
\end{equation*}
Consequently using the local law in the form of Lemma \ref{hs local law} it follows that
\begin{equation*}
\abs[1]{\braket{M_t'(\wt\xi_t(\wt \tau_t))}} = 1/ct + \mathcal O_\prec(1/Nt^4)\sim 1/t,
\end{equation*}
whence $\wt \xi_t(\wt \tau_t) -\widehat \tau_t\sim t^2$ where we again used the square root singularity of $\braket{M_t}$ at $\widehat \tau_t$. We can conclude, starting from \eqref{edge impl eq}, that
\begin{equation*}
\begin{split}
0 &=\braket{M_t'(\xi_t(\tau_t))}-\braket{\wt G_t'(\wt \xi_t(\wt \tau_t))} =\braket{M_t'(\xi_t(\tau_t))}-\braket{M_t'(\wt\xi_t(\wt \tau_t))}+\braket{(M_t'-\wt G_t')(\wt \xi_t(\wt \tau_t))}\\
&\sim \abs[0]{\xi_t( \tau_t)-\wt \xi_t(\wt \tau_t)}/t^3 + \mathcal O_\prec(1/Nt^4),
\end{split}
\end{equation*}
where we used that $\abs[0]{\braket{M_t''(\widehat \tau_t+rt^2)}}\sim t^{-3}$ for $c<r<C$ and the improved local law $\braket{G'-M'}\prec 1/N\kappa^2$ at a distance $\kappa\sim t^2$ away from the spectrum, as stated in Lemma \ref{hs local law}. We thus find that $\abs[0]{\xi_t(\tau_t)-\wt \xi_t(\wt \tau_t)}\prec 1/Nt$. It remains to relate this to an estimate on $\abs[0]{\tau_t-\wt \tau_t}$. We have 
\begin{equation*}
 \abs[0]{\tau_t-\wt \tau_t} \lesssim \abs[0]{\xi_t(\tau_t)-\wt\xi_t(\wt \tau_t)} + t \abs[0]{m_\fc^t(\tau_t)-m_\fc^t(\wt \tau_t)} + t\abs[0]{(m_\fc^t-\wt m_\fc^t)(\wt \tau_t)},
\end{equation*}
where we bounded the second term by $t\abs[0]{\braket{M_t(\xi_t(\tau_t))-M_t(\wt \xi_t(\wt \tau_t))}}\prec 1/Nt$ using $\abs[0]{\braket{M_t'(\widehat \tau_t+r t^2)}}\sim 1/t$ and the third term by $t\abs[0]{\braket{(M_t-\wt G_t)(\wt \xi_t(\wt \tau_t))}}\prec 1/Nt$ using the local law $t^2$ away from $\supp\varrho_t$. Thus we can conclude that $\abs[0]{\tau_t-\wt \tau_t}\prec 1/Nt$. 

\subsubsection*{Claim 4} $\abs{\gamma_t-\wt\gamma_t}\prec 1/Nt^3$. 
 We first note that $\gamma_t\sim 1$ follows from $\abs[0]{\braket{M_t''(\xi_t(\tau_t))}}\sim t^{-3}$. Therefore it suffices to estimate 
 \begin{equation*}t^{3}\abs[0]{\braket{M_t''(\xi_t(\tau_t))-\wt G_t''(\wt \xi_t(\wt \tau_t))}} \le t^{3}\abs[0]{\braket{M_t''(\xi_t(\tau_t))-M_t''(\wt \xi_t(\wt \tau_t))}} + t^{3}\abs[0]{\braket{M_t''(\wt\xi_t(\wt \tau_t))-\wt G_t''(\wt \xi_t(\wt \tau_t))}} \prec \frac{1}{Nt^3},\end{equation*}
 as follows from $\braket{M_t'''(\widehat \tau_t+rt^2)}\sim t^{-5}$ for $c<r<C$ and the local law from Lemma \ref{hs local law} at a distance of $\kappa\sim t^2$ away from the spectrum. Thus we have $\abs[0]{\gamma_t-\wt \gamma_t}\prec 1/Nt^3$. 

We now check that $\widetilde H_t$ is $\eta_\ast$-regular in the sense of \cite[Definition 2.1]{2017arXiv171203881L} for $\eta_\ast\defeq N^{-2/3+\epsilon}$. It follows from the local law that 
$c\varrho_t(z) \prec \Im\braket{\wt G_t(z)}\prec C\varrho_t(z)$ for some constants $c,C$, whenever $\Im z\ge \eta_\ast$. Now (2.4)--(2.5) in \cite{2017arXiv171203881L} follow in high probability from the assumption that $\varrho_t$ has a regular edge at $\tau_t$ . Furthermore, the absence of eigenvalues in the interval $[\tau_t+\eta_\ast, \tau_t + c/2 ]$ with high probability follows directly from Corollary \ref{cor no eigenvalues outside}. Finally, $\norm[0]{\widetilde H_t}\le N$ with high probability follows directly from $\norm[0]{\widetilde H_t}\le(\Tr\abs[0]{\widetilde H_t}^2)^{1/2}$. We can thus conclude that with high probability, $\widetilde H_t$ is $\eta_\ast=N^{-2/3+\epsilon}$ regular for any positive $\epsilon>0$. 

We denote the eigenvalues of $H_t=\widetilde H_t+c\sqrt{t} U$ by $\lambda_1^t\le \dots\le\lambda_N^t$. Then it follows from \cite[Theorem 2.2]{2017arXiv171203881L} that for $N^{-\epsilon}\ge t\ge N^{-2/3+\epsilon}$ with high probability for test functions $F\colon\R^{k+1}\to\R$ with $\norm{F}_\infty+\norm{\nabla F}_\infty\lesssim 1$ there exists some $c>0$ such that
\begin{equation}\label{landon yau}
\abs{\E\left[F\Big(\widetilde\gamma_t N^{2/3}(\lambda_{i_0}^t-\widetilde \tau_t),\dots,\widetilde\gamma_t N^{2/3}(\lambda_{i_0-k}^t-\widetilde \tau_t)\Big)\big|\widetilde H_t\right]- \E F\Big( N^{2/3}(\mu_N-2),\dots, N^{2/3}(\mu_{N-k}-2)\Big) }\le N^{-c}.
\end{equation}
By combining \eqref{landon yau} with $\abs[0]{\tau_0-\wt \tau_t}\prec N^{-2/3-\epsilon}$, $\abs[0]{\gamma-\wt\gamma_t}\prec N^{-\epsilon}$ from Claims 1--4, we obtain
\begin{equation}\label{DBM result}
\abs{\E\left[F\Big(\gamma N^{2/3}(\lambda_{i_0}^t- \tau_0),\dots,\gamma N^{2/3}(\lambda_{i_0-k}^t-\tau_0)\Big)\right]- \E F\Big( N^{2/3}(\mu_N-2),\dots, N^{2/3}(\mu_{N-k}-2)\Big) }\lesssim N^{-c} +N^{-\epsilon}
\end{equation}
for our choice of $t=N^{-1/3+\epsilon}$.

\subsection{Green's Function Comparison}
It remains to prove that the local correlation functions of $H_t$ agree with those of $H$. We want to prove that for any fixed $x_i\in\R$,
\begin{equation*}
\lim_{N\to\infty}\P\left( N^{2/3}(\lambda_{i_0-i}^t - \tau_0) \ge x_i,\, i=0,\dots,k \right)
\end{equation*}
is independent of $t$ as long as, say, $t\le N^{-1/3+\epsilon}$. We first note that the local law holds uniformly in $t$ also for $H_t$. This follows easily from the fact that the assumptions stay uniformly satisfied along the flow  because expectation and covariance are preserved while higher order cumulants also remain unchanged up to a multiplication with a $t$-dependent constant. For $l=N^{-2/3-\epsilon/3}$, $\eta=N^{-2/3-\epsilon}$, 
and smooth monotonous cut-off functions $K_i$ with $K_i(x)=0$ for $x\le i-1$ and $K_i(x)=1$ for $x\ge i$ we have
\begin{equation}\label{edge k pt E comp}
\begin{split}
&\E \prod_{i=0}^k K_{i_0-i}\left( \frac{\Im}{\pi} \int_{x_i N^{-2/3}+l}^{N^{-2/3+\epsilon}} \Tr G_t(x+\tau_0+\ii\eta)\diff x\right) - \landauO{N^{-\epsilon/9}} \le \P\left(N^{2/3}(\lambda_{i_0-i}^t-\tau_0)\ge x_i, \, i=0,\dots,k\right) \\
&\qquad\qquad\le \E \prod_{i=0}^k K_{i_0-i}\left( \frac{\Im}{\pi} \int_{x_i N^{-2/3}-l}^{N^{-2/3+\epsilon}} \Tr G_t(x+\tau_0+\ii\eta)\diff x\right)+ \landauO{N^{-\epsilon/9}}.\end{split}
\end{equation}
We note that the strategy of expressing $k$-point correlation functions of edge-eigenvalues through a regularized expression involving the resolvent was already used in \cite{MR2871147,MR3034787,MR3405746,2017arXiv171203936H} for proving edge universality. The precise formula \eqref{edge k pt E comp} has been already used, for example, in \cite[Eq.~(4.8)]{2017arXiv171203936H}. 

In order to compare the expectations in \eqref{edge k pt E comp} at times $t=0$ and $t=N^{-1/3+\epsilon}$, we claim that we have the bound
\begin{equation}\label{Xt bound}
X_y \defeq \Im\int_{y N^{-2/3}\pm l}^{N^{-2/3+\epsilon}} \Tr G_t(\tau_0+x+\ii\eta) \diff x, \quad \abs{\E g(X_{x_0},\dots,X_{x_k})\frac{\diff X_{x_j}}{\diff t}}\lesssim N^{1/6+3\epsilon}
\end{equation}
for any $0\le j\le k$ and smooth function $g$. Assuming \eqref{Xt bound}, it follows for the smooth functions $K_{j}$ and by Taylor expansion that that for $t\lesssim N^{-1/3+\epsilon}$,
\begin{equation*}
\Bigg\lvert\E \prod_{i=0}^k K_{i_0-i}\left( \frac{\Im}{\pi} \int_{x_iN^{-2/3}\pm l}^{N^{-2/3+\epsilon}} \Tr G_t(x+\tau_0+\ii\eta)\diff x\right) - \E \prod_{i=0}^k K_{i_0-i}\left( \frac{\Im}{\pi} \int_{x_iN^{-2/3}\pm l}^{N^{-2/3+\epsilon}} \Tr G_0(x+\tau_0+\ii\eta)\diff x\right)\Bigg\rvert\lesssim \frac{1}{N^{1/6-4\epsilon}}.
\end{equation*}
Together with \eqref{edge k pt E comp} we obtain for any $k,x_i$
\begin{equation}\label{gfc result}
\P\left( N^{2/3}(\lambda_{i_0-i}^t - \tau_0) \ge x_i,\, i=0,\dots,k \right) =\P\left( N^{2/3}(\lambda_{i_0-i}^0 - \tau_0) \ge x_i,\, i=0,\dots,k \right)  +\landauO{N^{-\epsilon/9}}.
\end{equation}
Eq.~\eqref{Xt bound} for $g\equiv 1$ follows from It\^o's lemma in the form \begin{equation*}
\E \frac{\diff f(H)}{\diff t} = \E \Bigg[- \frac{1}{2}\sum_\alpha w_\alpha (\partial_\alpha f)(H) +\frac{1}{2}\sum_{\alpha,\beta}\kappa(\alpha,\beta) (\partial_{\alpha}\partial_\beta f)(H) \Bigg]
\end{equation*} and the general neighbourhood cumulant expansion involving \emph{pre-cumulants}, as introduced in \cite[Proposition 3.5]{2017arXiv170510661E}. This expansion formula was a key input to the Green's function comparison argument in the spectral bulk in \cite[Corollary 2.6]{2017arXiv170510661E} for correlated matrix models under Assumptions \hyperlink{assumpCD}{(CD)}. Given the local law, Theorem \ref{theorem local law}, the extension of this proof to the edge is a routine power counting argument even for $g\not\equiv 1$ and is left to the reader. 

\begin{proof}[Proof of Theorem \ref{thm edge univ}]
The theorem follows directly from \eqref{DBM result} and \eqref{gfc result}.
\end{proof}

\appendix
\section{Auxiliary results}
\begin{proof}[Proof of Lemma \ref{lemma ast norm bounds}]
From (70a)--(70b) in \cite{2017arXiv170510661E} we have\footnote{C.f.~Remark \ref{ast norm apology} for the applicability of these bounds in the present setup.} 
\begin{subequations}
\begin{equation}
\norm{M\SS[R]R}_\ast \lesssim N^{1/2K} \norm{R}_\ast^2, \qquad \norm{MR}_\ast \lesssim N^{1/2K}\norm{R}_\ast
\end{equation}
and furthermore by a three term geometric expansion also 
\begin{equation}
\norm{\BO^{-1}\cQ}_{\ast\to\ast} \le (1+\norm{\cQ}_{\ast\to\ast})\Big( 1+ \norm{\cC_M \SS}_{\ast\to\ast} + \norm{\cC_M \SS}_{\ast\to\text{hs}} \normsp{\BO^{-1}\cQ}\norm{\cC_M\SS}_{\text{hs}\to\ast}\Big).
\end{equation}
\end{subequations}
Since 
\begin{equation*}
\norm{\cP[R]}_\ast = \frac{\abs{\braket{P,R}}}{\abs{\braket{P,B}}} \norm{B}_\ast \le \frac{\norm{B}}{\abs{\braket{P,B}} N} \sum_a \abs{R_{P^\ast_{a\cdot}a}} 
\le \frac{\norm{B}\norm{R}_\ast}{\abs{\braket{P,B}}  N}\sum_a \norm{P^\ast_{a\cdot}} \le \frac{\norm{P}\norm{B}}{\abs{\braket{P,B}}} \norm{R}_\ast
\end{equation*}
it follows that $\norm{\cP}_{\ast\to\ast}\lesssim 1$ and therefore also $\norm{\cQ}_{\ast\to\ast}\lesssim 1$. Now, since $\norm{R}_{\max}\le\norm{R}_\ast\le\norm{R}$ and according to (73) in \cite{2017arXiv170510661E} also $\max\{\norm{\SS}_{\max\to\norm{\cdot}},\norm{\SS}_{\text{hs}\to\norm{\cdot}}\}\lesssim 1$, the lemma follows together with $\normsp{\BO^{-1}\cQ}\lesssim 1$ from Proposition \ref{prop stability reference}\eqref{1-CMS bound with flatness}.
\end{proof}

\begin{lemma}\label{hs local law}
Fix any $\epsilon,\delta>0$ and an integer $k\ge 0$. Under the assumptions of Theorem \ref{theorem local law}, for the $k$-th derivatives of $M$ and $G$ we have the bound 
\begin{equation}\label{hs local law eq}
\abs{\braket{G^{(k)}(z)-M^{(k)}(z)}}\prec \frac{1}{N\kappa^{k+1}}.
\end{equation}
uniformly in $z\in\DD^\delta$ with $\kappa=\dist(z,\supp\varrho)\ge N^{-2/3+\epsilon}$.
\end{lemma}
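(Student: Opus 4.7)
The plan is to reduce to the $k=0$ case via the Cauchy integral formula; the case $k=0$ is exactly \eqref{av outisde} applied to $B = \id$. Since $\braket{\cdot}$ is linear and $z$-independent, the scalar function $f(z) := \braket{G(z) - M(z)}$ satisfies $f^{(k)}(z) = \braket{G^{(k)}(z) - M^{(k)}(z)}$, so estimating $k$th derivatives of $f$ is exactly what is needed.

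Fix $z_0 \in \DD^\delta$ with $\kappa_0 := \dist(z_0, \supp\varrho) \ge N^{-2/3+\epsilon}$ and set $r := \kappa_0/4$. I would invoke
\[
f^{(k)}(z_0) = \frac{k!}{2\pi\ii} \oint_{|w-z_0|=r} \frac{f(w)}{(w-z_0)^{k+1}}\,\diff w,
\]
which yields the crude bound $|f^{(k)}(z_0)| \le k!\, r^{-k} \sup_{|w-z_0|=r}|f(w)|$. Every $w$ on the bounding circle has $\dist(w,\supp\varrho) \ge 3\kappa_0/4$, so if the $k=0$ estimate $|f(w)| \prec 1/(N\kappa_0)$ can be upgraded to hold uniformly on the circle, I obtain exactly the desired $|f^{(k)}(z_0)| \prec 1/(N\kappa_0^{k+1})$.

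Two points must be addressed. First, $f$ must be holomorphic on the closed disk $\overline{B(z_0, r)}$: here $\braket{M}$ extends holomorphically to $\C \setminus \supp\varrho$ as the Stieltjes transform of $\varrho$, and $\braket{G}$ extends to $\C \setminus \Spec H$. I would therefore work on the overwhelming-probability event supplied by Corollary~\ref{cor no eigenvalues outside} with some $\epsilon' < \epsilon$, on which every eigenvalue of $H$ lies within $N^{-2/3+\epsilon'} \ll \kappa_0$ of $\supp\varrho$. The reverse triangle inequality then places every eigenvalue at distance at least $\kappa_0 - N^{-2/3+\epsilon'} > \kappa_0/2 > r$ from $z_0$, so both $\braket{G}$ and $\braket{M}$ are indeed holomorphic on $\overline{B(z_0,r)}$.

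Second, the bound on $|f(w)|$ needs to hold uniformly in $w$ on the full circle. For $w$ in the upper half-plane this is supplied directly by \eqref{av outisde}; the portion of the circle in the lower half-plane is handled by Schwarz reflection $f(\bar w) = \overline{f(w)}$, and the two points on the real axis by continuity. The pointwise-to-uniform conversion is a standard $N^{-C}$-net argument exploiting the crude $N^C$-Lipschitz continuity of $f$ on the event $\norm{H}\lesssim 1$, which holds with overwhelming probability. The only remaining obstacle is bookkeeping: ensuring that the enlarged contour still lies in the admissible window of Theorem~\ref{theorem local law}. This can be arranged by shrinking $\delta$ slightly at the outset so that the excursion $\Re w \in [\Re z_0 - r,\, \Re z_0 + r]$ stays within the allowed neighbourhood of $\tau_0$; the resulting constants absorb into the $\prec$ notation.
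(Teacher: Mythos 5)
Your argument is correct, and it takes a genuinely different route from the paper. The paper proves this lemma with the Helffer--Sj\"ostrand functional calculus: it constructs a smooth cutoff $\chi$ adapted to $\supp\varrho$, forms the almost-analytic extension of $x'\mapsto (x'-z)^{-k}\chi(x')$, uses Corollary~\ref{cor no eigenvalues outside} to write $\braket{G^{(k)}-M^{(k)}}$ as an integral of $\partial_{\bar z}f^{\C}$ against $\braket{G-M}$ over the upper half-plane, and then feeds in the local law after splitting off the $\eta'\lesssim N^{-1+\gamma}$ region. You instead invoke the classical Cauchy integral formula on a circle of radius $r\sim\kappa$ around $z_0$, which works precisely because $\kappa\ge N^{-2/3+\epsilon}$ together with Corollary~\ref{cor no eigenvalues outside} carves out a holomorphy region of scale $\kappa$ around $z_0$; your reduction $f(\bar w)=\overline{f(w)}$ follows directly from $G(\bar w)=G(w)^*$ and the Stieltjes representation of $\braket{M}$, so the reflection step is sound, and since \eqref{av outisde} is already stated uniformly over the domain, the net argument you mention is not even needed on the upper half of the circle. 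The Cauchy route is more elementary and yields the result in essentially one line; the Helffer--Sj\"ostrand route avoids ever leaving $\Hb$ and degrades gracefully as $z$ approaches the spectrum, which is why it is the customary tool in this area, but here the two are equally valid. The only genuine bookkeeping point you wave at is worth spelling out: with $r=\kappa/4$ the contour can leave $\DD^\delta$ when $\kappa$ is of order one (e.g.\ large $\Im z_0$), so one should take $r=\min\{\kappa/4,\,c_\delta\}$ for a small constant $c_\delta$; this does not affect the final bound since $\kappa\lesssim 1$ throughout $\DD^\delta$, so the loss in the prefactor $r^{-k}$ is at most a $\delta$-dependent constant, absorbed into $\prec$.
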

\begin{proof}
We will fix $z=x+\ii\eta$ throughout the proof. Let $\chi\colon\R\to\R$ be a smooth cut-off function such that $\chi(x')=1$ for $\kappa'=\dist(x',\supp\varrho)\le \kappa/3$ and $\chi(x')=0$ for $\kappa'\ge 2\kappa/3$ and let $\wt\chi$ be a cut-off function such that $\wt\chi(\eta')=1$ for $\eta'\le 1$ and $\wt\chi(\eta')=0$ for $\eta'\ge 2$. We also assume that the cut-off functions have bounded derivatives in the sense $\norm{\chi'}_\infty\lesssim 1/\kappa, \norm{\chi''}_\infty\lesssim 1/\kappa^2$ and $\norm{\wt\chi'}_\infty\lesssim 1$. We now define $f(x')\defeq (x'-z)^{-k}\chi(x')$ and the almost analytic extension 
\begin{equation*}
f^{\C}(z')=f^{\C}(x'+\ii\eta') \defeq \wt\chi(\eta')\Big[ f(x')+\ii\eta'f'(x') \Big], \qquad \partial_{\overline{z}} f^{\C}(z')=\frac{\ii\eta'}{2}\wt\chi(\eta')f''(x') + \frac{\ii}{2} \wt\chi'(\eta')\Big[ f(x')+\ii\eta' f'(x') \Big].
\end{equation*}
It follows from the Cauchy Theorem and the absence of eigenvalues outside $\set{\chi=1}$ in the sense of Corollary \ref{cor no eigenvalues outside} that with high probability
\begin{equation*}
\braket{G^{(k)}(z)-M^{(k)}(z)} = \frac{2}{\pi}\Re\int_{\R}\int_{\R_+} \partial_{\overline z} f^{\C}(z') \braket{G(z')-M(z')}\diff\eta'\diff x'.
\end{equation*}
Due to the fact that $\wt\chi'=0$ for $\eta'\le1$ the second term in $\partial_{\overline z} f^{\C}$ only gives a contribution of $1/N\kappa^{k+1}$ even by the local law and the $\norm{\cdot}_\infty$ bound for $\partial_{\overline z}f^{\C}$ and we now concentrate on the first term. First, we exclude the integration regime $\eta'\lesssim N^{-1+\gamma}$ in which we cannot use the local law but only the trivial bound $\braket{G-M}\lesssim 1/\eta'$. For the contribution of this regime to \eqref{hs local law eq} we thus have to estimate 
\begin{equation*}
N^{-1+\gamma}\int_{\R} \abs{f''(x')} \diff x' \lesssim \frac{1}{N} \int_{\abs{x-x'}\ge 2\kappa/3}\bigg[\frac{1}{\kappa^2 \abs{x-x'}^{k}}+\frac{1}{\kappa \abs{x-x'}^{k+1}}+\frac{1}{ \abs{x-x'}^{k+2}} \bigg]\diff x' \lesssim \frac{N^{\gamma}}{N\kappa^{k+1}}
\end{equation*}
and we have shown that
\begin{equation*}
\abs{\braket{G^{(k)}(z)-M^{(k)}(z)}} \prec \frac{N^\gamma}{N\kappa^{k+1}} + \int_{\R} \int_{N^{-1+\gamma}}^2 \eta' \bigg[\frac{\chi(x')}{\abs{x'-z}^{k+2}}+\frac{\chi'(x')}{\abs{x'-z}^{k+1}}+\frac{\chi''(x')}{\abs{x'-z}^{k}}\bigg] \abs{\braket{G(z')-M(z')}}\diff \eta'\diff x'.
\end{equation*}
We now use the local law of the form $\abs{\braket{G-M}}\prec 1/N(\kappa+\eta')$ and that in the second and third term the integration regime is only of order $\kappa$ to obtain the final bound of $N^{\gamma}/N\kappa^{k+1}$ for any $\gamma>0$.
\end{proof}

\printbibliography

\end{document}